\numberwithin{equation}{section}
\newtheorem{defi}{Definition}[subsection]
\newtheorem{rem}{Remark}[subsection]
\newtheorem{theorem}{Theorem}[subsection]
\newtheorem{sub}{Sublemma}
\newtheorem{lemma}[theorem]{Lemma}
\newtheorem{cor}[subsection]{Corollary}
\newtheorem{prop}[subsection]{Proposition}
\newenvironment{proof}{}{\qed\bigskip

}
\newcommand\ack{\bigskip

{\bf Acknowledgement}\\}
\newcommand\qed{\hfill\ensuremath{\blacksquare}}
\newcommand\lam{\lambda}
\newcommand\Val{\mathrm{Val\,}}
\newcommand\Lam{\Lambda}
\newcommand\norm[1]{\zet{\zet{#1}}}
\newcommand\snorm[1]{\zet{\zet{#1}}}
\newcommand\htht[1]{\mathrm{top}#1}
\newcommand\val{\mathrm{val\,}}
\newcommand\ai{\mathrm{i}}
\newcommand\rbatu{R^\times}
\newcommand\ZZ{\ensuremath{\mathbf{Z}}}
\newcommand\CC{\ensuremath{\mathbf{C}}}
\newcommand\RR{\ensuremath{\mathbf{R}}}
\newcommand\NN{\ensuremath{\mathbf{N}}}
\newcommand\QQ{\ensuremath{\mathbf{Q}}}
\newcommand\vect[1]{\mbox{\boldmath$#1$}}
\newcommand\ep{\varepsilon}
\newcommand\zet[1]{\left\vert {#1} \right\vert}
\newcommand\ee{\vect{e}}
\begin{document}

\title{Integration over Tropical Plane Curves and
Ultradiscretization}
\author{Shinsuke Iwao,\\
Graduate School of Mathematical Sciences,
The University of Tokyo,\\
3-8-1
Komaba Meguro-ku, Tokyo 153-8914, Japan.\\
iwao@ms.u-tokyo.ac.jp}
\date{\today}

\maketitle

\begin{abstract}
In this article we study
holomorphic integrals on tropical plane curves
in view of ultradiscretization.
We prove that the lattice integrals over tropical curves
can be obtained as a certain limit of complex integrals over
Riemann surfaces.
\end{abstract}

\section{Introduction}

A
\textit{tropical curve} is a kind of algebraic curve
defined over the
tropical semifield $\mathbf{T}=\RR\cup\{\infty\}$
equipped with the min-plus operations:
\[
``x+y"=\min\{x,y\},\ ``xy"=x+y.
\]
The geometry over tropical curves was introduced by several authors \cite{Mik0,Spe}.
Among these works,
the theory of integration over
tropical curves was introduced
by \!Mikhalkin and Zharkov in \cite{Mik}.
According to their work,
a holomorphic differential on a tropical curve
is defined as a global section of the real cotangent sheaf
(Definition 4.1 \cite{Mik}).
Using the concept of tropical differentials,
they derive
the definition of a tropical holomorphic integral.

As one of the applications of
tropical geometry,
a number of authors have tried to solve problems
concerning 
integrable systems or dynamical systems
by using
the method of tropical geometry \cite{Takenawa,Nobe}.

The bridge between integrable systems and tropical geometry is
the method of
\textit{ultradiscretization}.
Ultradiscretization is a kind of limiting procedure,
which is usually described as 
$- \lim_{\ep\to 0^+}\ep\log{\cdot}$.
The two formulae
\begin{equation}
-\lim_{\ep\to 0^+}\ep\log{(e^{-a/\ep}\cdot e^{-b/\ep})}=a+b,\quad
-\lim_{\ep\to 0^+}\ep\log{(e^{-a/\ep}+ e^{-b/\ep})}=\min{[a,b]},
\end{equation}
($a,b\in\RR$) are fundamental.
Through ultradiscretization, we translate objects over $\CC$
into the min-plus algebra.

In this paper we study the tropicalization 
(or ultradiscretization) of 
holomorphic integrals over
complex plane curves.
Loosely speaking,
the question we wish to answer is: 
``Why is it that tropical integrals are able to
tell us something about the behaviour of complex integrals?"

Many researchers have studied the relationship between 
analytic curves and tropical curves.
Katz, Markwig and Markwig \cite{Katz} studied the $j$-invariant
of cubic curves and its tropicalization.
For the genus zero and genus one cases,
Speyer \cite{Speyer} proved the existence of an
analytic curve tropicalization of which 
coincides with a given tropical curve in any ambient space.
Helm and Katz \cite{Helm} discussed
the relationship between the tropical curve and the monodromy
action on the Hodge structure.

In order to make use of the established results for hypersurfaces
(for example Viro's approximation theorem \cite{Viro1,Viro2}),
we restrict ourselves to tropical integral calculus over plane curves
instead of considering more general tropical curves 
which have been studied by many researchers.
The main theorem (Theorem \ref{thm}) gives us the exact relation between them.
(Figure \ref{fig0}).
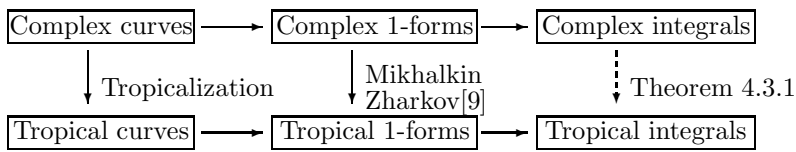
\begin{figure}[htbp]
\begin{center}
\begin{picture}(282,52)
\put(0,40){\framebox(70,12){Complex curves}}
\put(35,20){Tropicalization}
\put(0,0){\framebox(70,12){Tropical curves}}
\put(100,40){\framebox(77,12){Complex $1$-forms}}
\put(135,25){Mikhalkin}
\put(135,15){Zharkov\cite{Mik}}
\put(100,0){\framebox(77,12){Tropical $1$-forms}}
\put(200,40){\framebox(82,12){Complex integrals}}
\put(235,20){Theorem \ref{thm}}
\put(200,0){\framebox(82,12){Tropical integrals}}
\put(30,37){\vector(0,-1){20}}
\put(130,37){\vector(0,-1){20}}
\multiput(230,37)(0,-4){5}{\line(0,-1){2}}
\put(230,17){\vector(0,-1){0}}
\put(73,6){\vector(1,0){23}}
\put(73,46){\vector(1,0){23}}
\put(179,6){\vector(1,0){17}}
\put(179,46){\vector(1,0){17}}
\end{picture}
\end{center}
\caption{Theorem \ref{thm} shows us the direct relation between
two kinds of ``integrals".}
\label{fig0}
\end{figure}
\bigskip

{\bf Note :}
Throughout this paper, $\ep$ is a small real parameter $0<\ep<1$.
The symbol $\ee$ denotes the real number $e^{-1/\ep}$.
A \textit{formal Puiseux series with respect to} $\ee$ is a formal sum
of the form $\sum_{i=-n}^\infty{a_i\ee^{i/d}}$, where $a_i$ is a complex number
and $d$ is a positive integer.
If a formal Puiseux series converges for $\ep$ sufficiently small,
it is called \textit{convergent Puiseux series}.
$K$ denotes the field of convergent Puiseux series.
The field $K$ has the 
standard non-archimedean valuation $\val:K\to\QQ\cup\{+\infty\}$.
($\val(0)=+\infty$).
In this paper, 
we assume that all the Puiseux series considered are convergent,
unless otherwise stated.

\section{Approximation of hypersurfaces}\label{sec2}

\subsection{PL-polynomials and tropical hypersurfaces}

The purpose of this section is to give a brief review of the method 
for the \textit{approximation of hypersurfaces} 
of algebraic tori
given in \cite[\S6]{Viro2}.

Let $\CC$ be the complex number field and $\CC^n$ be the complex $n$-space.
Throughout this paper, $U$ denotes 
the unit circle $\{x\in\CC\,\vert\,\zet{x}=1\}$,
and $\CC\RR^n$ denotes 
the algebraic torus 
$\{(x_1,x_2,\dots,x_n)\,\vert\,x_1x_2\cdots x_n\neq 0\}$.

For a small positive parameter $0<\ep<1$,
define the maps $l(\ep):\CC\RR^n\to\RR^n$ and $a:\CC\RR^n\to U^n\
(:=U\times\dots\times U)$ by the formulae
\[
l(\ep)(x_1,\dots,x_n)=(-\ep\log{\zet{x_1}},\dots,-\ep\log{\zet{x_n}}),\quad
a(x_1,\dots,x_n)=\left(\frac{x_1}{\zet{x_1}},\dots,
\frac{x_n}{\zet{x_n}}\right).
\]

It is clear that the map
$la(\ep):\CC\RR^n\to\RR^n\times U^n$ defined by 
$x\mapsto (l(\ep)(x),\,a(x))$ is a diffeomorphism for any $\ep$.

For $w\in \RR$ and $\ep>0$, denote by $\mathcal{Q}_{w,\ep}$ the transformation
$\CC\RR^n\to\CC\RR^n$ defined by
\[
\mathcal{Q}_{w,\ep}(x_1,\dots,x_n)=(e^{-w_1/\ep}x_1,\dots,e^{-w_n/\ep}x_n),
\qquad
\mbox{ where }\
w=(w_1,\dots,w_n).
\]
We abbreviate the symbol $\mathcal{Q}_{w,\ep}$ as $\mathcal{Q}_w$ if
there is no confusion.

Let $T_w:\RR^n\to \RR^n$ be the translation $x\mapsto x+w$.
By definition,
we can derive the relation
\begin{equation}
la(\ep)\circ \mathcal{Q}_{w} \circ la(\ep)^{-1}=T_w\times \mathrm{id}_{U^n}.
\end{equation}
\bigskip

Our main object is an algebraic hypersurface of $\CC\RR^n$ defined
by a Laurent polynomial, which is an element of the ring
$\CC[x_1,x_1^{-1},\dots,x_n,x_n^{-1}]$.
Denote by $V_{\CC\RR^n}(f)$ the algebraic set in $\CC\RR^n$
defined by the Laurent polynomial 
$f(x_1,\dots,x_n)$, and 
let $V_{W}(f):=V_{\CC\RR^n}(f)\cap W$
for a subset $W\subset \CC\RR^n$.

For $w=(w_1,\dots,w_n)\in\ZZ^n$
and ordered $n$ variables $x=(x_1,\dots,x_n)$,
we abbreviate the monomial $x_1^{w_1}\cdots x_n^{w_n}$
as $x^w$.
Let $\{V_{\CC\RR^n}(f_\ep)\}_\ep$ be
an one-parameter family of algebraic hypersurfaces,
where $\ep$ is a positive real parameter and 
$f_\ep=f_\ep(x_1,\dots,x_n)$ is a Laurent polynomial
with coefficients depending on $\ep$.
In this paper, we mainly consider the polynomials
$f_\ep$ of the form:
\[
f_\ep(x)=\sum_{w\in\ZZ^n}{a_w(\ep)\,x^w},\qquad 
x=(x_1,\dots,x_n),\qquad \ a_w(\ep)\in K,
\]
where $a_w(\ep)\equiv 0$ except for finitely many $w\in\ZZ^n$. 
We call this type of polynomial a \textit{parameterised L-polynomial},
or \textit{pL-polynomial}.

Define
a \textit{tropical polynomial $\Val(X;f_\ep)$ 
associated with} $f_\ep$ by the formula
\[
\Val(X;f_\ep):=\min_{w\in\ZZ^n}[\val(a_w)+w_1X_1+\dots+w_nX_n],\qquad
X=(X_1,\dots,X_n).
\]

A \textit{tropical hypersurface defined by} $f_\ep$ is
a subset of $\RR^n$ defined by 
\begin{equation}\label{eq-x2}
\left.\left\{\phantom{\min_{w\in\ZZ^n}}\hspace{-0.7cm}
P=(A_1,\dots,A_n)\in \RR^n\,\right\vert\,
\mbox{the function }
\Val(X;f_\ep)
\mbox{ is not smooth at } X=P
\right\}.
\end{equation}
(For explicit examples for $n=2$, see Section \ref{sec3.1}).
We denote this tropical hypersurface by $TV_{\RR^n}(f_\ep)$.

Let $\mathfrak{a}=\{(X,\Val(X;f_\ep))\,\vert\,X\in\RR^n\}
\subset\RR^n\times\RR$ be the graph of 
$\Val(X;f_\ep)$. 
Clearly, $\mathfrak{a}$ is the skeleton of an $(n+1)$-dimensional convex 
(unbounded) polytope.
The tropical hypersurface $TV_{\RR^n}(f_\ep)$ 
is the image of the collection of 
$(n-1)$-faces in $\mathfrak{a}$
by the natural projection $\RR^{n}\times\RR\to\RR^n$.

\subsection{Canonical expressions of pL-polynomials}\label{sec2.2}

We
define $\snorm{f}:=\max_w{\zet{a_w}}$
for a Laurent polynomial $f(x_1,\dots,x_n)=\sum_w{a_wx^w}$.

Let $P=(A_1,\dots,A_n)$ be a point in $\RR^n$.
There exist finitely many 
integer vectors $w^{(1)},\dots,w^{(\alpha)}\in\ZZ^n$
such that the equations
$\Val(P;f_\ep)=\val(a_{w^{(i)}})+w^{(i)}_1A_1+\dots+w^{(i)}_nA_n$,
$(i=1,2,\dots,\alpha)$
hold.
Let $\Theta(P)$ be a set of these vectors.
Define the polynomial $\tilde{f}_\ep^P$ by the formula
$\tilde{f}_\ep^P=\displaystyle
\sum_{w\in\Theta(P)}{a_w(\ep)\,x^w}$.
\begin{rem}\label{rem-ex}
Because $\Theta(P)=\{w\}$ implies the formula $\Val(X;f_\ep)=
\val(a_w)+w_1X_1+\dots+w_nX_n$
around $X=P$,
the number of elements of $\Theta(P)$ is always greater than one for
$P\in TV_{\RR^n}{(f_\ep)}$.
\end{rem}

We note that the following two easy lemmas:
\begin{lemma}\label{lemma-ex-2}
Let $O=(0,\dots,0)$ be the origin of $\RR^n$. Then,
\[
\Val(P;f_\ep)=\Val(O;f_\ep\circ\mathcal{Q}_P),\qquad\mbox{where}\qquad
f_\ep\circ\mathcal{Q}_P(x)=f_\ep(\mathcal{Q}_P(x)).
\]
\end{lemma}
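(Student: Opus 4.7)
The plan is to establish the identity by directly unravelling both sides from the definitions. Write $P=(A_1,\ldots,A_n)$ and recall $\ee=e^{-1/\ep}$, so that by the definition of $\mathcal{Q}_P$ given in Section~\ref{sec2},
\[
\mathcal{Q}_P(x_1,\ldots,x_n)=(\ee^{A_1}x_1,\ldots,\ee^{A_n}x_n).
\]
Substituting this into the pL-polynomial $f_\ep(x)=\sum_w a_w(\ep)\,x^w$ yields
\[
f_\ep\circ\mathcal{Q}_P(x)=\sum_{w\in\ZZ^n}a_w(\ep)\,\ee^{w_1A_1+\cdots+w_nA_n}\,x^w,
\]
so the coefficient of $x^w$ in the pL-polynomial $f_\ep\circ\mathcal{Q}_P$ is $a_w(\ep)\ee^{w_1A_1+\cdots+w_nA_n}$.

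Next I would compute the valuation of this coefficient. Since $\val(\ee^A)=A$ (with the natural extension of $\val$ to real powers of $\ee$) and $\val$ is multiplicative, it follows that
\[
\val\bigl(a_w(\ep)\,\ee^{w_1A_1+\cdots+w_nA_n}\bigr)=\val(a_w)+w_1A_1+\cdots+w_nA_n.
\]
Applying the defining formula of $\Val$ at the origin $O=(0,\ldots,0)$ then gives
\[
\Val(O;f_\ep\circ\mathcal{Q}_P)=\min_{w\in\ZZ^n}\bigl[\val(a_w)+w_1A_1+\cdots+w_nA_n\bigr],
\]
which is precisely $\Val(P;f_\ep)$ by its definition. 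This would complete the argument.

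The only point that requires a little care is that, for $P$ with irrational coordinates, the coefficient $a_w(\ep)\ee^{w_1A_1+\cdots+w_nA_n}$ does not literally belong to the convergent Puiseux field $K$. I expect this to be the main (and really only) obstacle, but it is purely a matter of bookkeeping: one enlarges the coefficient ring to admit arbitrary real powers of $\ee$ and extends $\val$ by $\val(\ee^A)=A$. Under this convention the lemma reduces entirely to the substitution computation above and no further difficulty arises.
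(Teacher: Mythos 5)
Your proposal is correct and is exactly the definition-unravelling the paper has in mind: the paper's own proof is the one-line remark that the identity is clear from the definition of $\Val$, and your substitution computation (with the coefficient $a_w\ee^{w_1A_1+\cdots+w_nA_n}$, the same expression the paper itself uses in Lemma~\ref{lemma-ex-3}) simply makes that explicit. The caveat you raise about irrational coordinates of $P$ is handled correctly and matches the paper's implicit convention, so nothing further is needed.
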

\begin{proof}
It is clear by the definition of $\Val(P;f_\ep)$.
\end{proof}
\begin{lemma}\label{lemma-ex-3}
$(\widetilde{f_\ep\circ\mathcal{Q}_P})^O=\tilde{f}_\ep^P\circ\mathcal{Q}_P$.
\end{lemma}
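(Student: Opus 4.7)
The plan is to prove the identity by direct term-by-term comparison, using the fact that the monomial substitution $\mathcal{Q}_P$ acts diagonally on monomials and simply shifts the valuation of each coefficient by the inner product with $P$.

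First, I would expand the composition. Writing $f_\ep(x)=\sum_w a_w(\ep)\,x^w$, substituting $\mathcal{Q}_P(x)=(e^{-A_1/\ep}x_1,\dots,e^{-A_n/\ep}x_n)$ yields
\[
f_\ep\circ\mathcal{Q}_P(x)=\sum_w a_w(\ep)\,e^{-(w_1A_1+\cdots+w_nA_n)/\ep}\,x^w.
\]
The coefficient of $x^w$ in $f_\ep\circ\mathcal{Q}_P$ is therefore $a_w(\ep)\,\ee^{w_1A_1+\cdots+w_nA_n}$, whose valuation equals $\val(a_w)+w_1A_1+\cdots+w_nA_n$.

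Second, I would identify the two index sets that define the truncations. By the definition of $\Theta$, a vector $w$ belongs to $\Theta_{f_\ep\circ\mathcal{Q}_P}(O)$ precisely when $\val(a_w)+w_1A_1+\cdots+w_nA_n$ achieves the minimum $\Val(O;f_\ep\circ\mathcal{Q}_P)$. By Lemma \ref{lemma-ex-2} this minimum equals $\Val(P;f_\ep)$, and the condition for $w$ to achieve it is exactly the defining condition for $w\in\Theta(P)$ with respect to $f_\ep$. Hence $\Theta_{f_\ep\circ\mathcal{Q}_P}(O)=\Theta_{f_\ep}(P)$.

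Third, I would compare the two polynomials term by term. The left-hand side is
\[
(\widetilde{f_\ep\circ\mathcal{Q}_P})^O=\sum_{w\in\Theta_{f_\ep\circ\mathcal{Q}_P}(O)} a_w(\ep)\,e^{-(w_1A_1+\cdots+w_nA_n)/\ep}\,x^w,
\]
while the right-hand side is
\[
\tilde{f}_\ep^P\circ\mathcal{Q}_P=\sum_{w\in\Theta_{f_\ep}(P)} a_w(\ep)\,(\mathcal{Q}_P(x))^w=\sum_{w\in\Theta_{f_\ep}(P)} a_w(\ep)\,e^{-(w_1A_1+\cdots+w_nA_n)/\ep}\,x^w.
\]
Since the two index sets agree, the two sums are identical monomial by monomial, and the lemma follows.

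This lemma is essentially a bookkeeping statement, so I do not expect a real obstacle; the only subtle point is to make sure that the valuation computation for $a_w(\ep)\,e^{-\langle P,w\rangle/\ep}$ is correctly matched with the two different notions of $\Theta$, which is handled cleanly by invoking Lemma \ref{lemma-ex-2}.
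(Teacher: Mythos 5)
Your proof is correct and follows essentially the same route as the paper: both expand $f_\ep\circ\mathcal{Q}_P$ monomial by monomial, observe that the coefficient of $x^w$ becomes $a_w\,\ee^{w_1A_1+\cdots+w_nA_n}$ with valuation shifted by $\langle P,w\rangle$, and identify the truncation index set at $O$ with $\Theta(P)$ via Lemma \ref{lemma-ex-2}. No gaps; your write-up just makes explicit the identification of index sets that the paper leaves implicit in its definition of the set $\sharp$.
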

\begin{proof}
Let $f_\ep=\sum_w{a_w(\ep)x^w}$ and $P=(A_1,\dots,A_n)$. Then,
\begin{align*}
\textstyle(\widetilde{f_\ep\circ\mathcal{Q}_P})^O&=\textstyle
\sum_{\sharp}{a_w\ee^{A_1w_1+\dots+A_nw_n}x^w}
=\sum_{w\in\Theta(P)}{a_w\ee^{A_1w_1+\dots+A_nw_n}x^w}
=\tilde{f}_\ep^P\circ\mathcal{Q}_P,
\end{align*}
where $\sharp$ means \lq $\{w\,\vert\,
\val(a_w)+A_1w_1+\dots+A_nw_n=
\Val(O;f_\ep\circ\mathcal{Q}_P)$\}
\rq.
\end{proof}

Next we consider the decomposition:
\[\textstyle
f_\ep=
\left(\sum_{w\in\Theta(O)}+
\sum_{w\not\in\Theta(O)}
\right)
(a_w(\ep)\,x^w)
=\tilde{f}_\ep^O+\sum_{w\not\in\Theta(O)}{a_w(\ep)\,x^w}.
\]
By definition, the elements of the set $\Theta(O)$ satisfy the following relation:
$w\in\Theta(O),v\not\in\Theta(O)\Rightarrow 
\Val(O;f_\ep)=\val(a_{w})
<\val(a_{v})$. Therefore the pL-polynomial $\ee^{-\Val(O;f_\ep)}f_\ep$
can be decomposed as
$\ee^{-\Val(O;f_\ep)}f_\ep=f_1+f_2$, 
where 
\begin{gather*}
\textstyle
f_1=\ee^{-\Val(O;f_\ep)}\tilde{f}_\ep^O=\sum_w{b(\ep)\,x^w} \text{ s.t. } 
\val(b(\ep))=0,\\ 
\textstyle
\text{and }\qquad f_2=\sum_w{b'(\ep)\,x^w}  \text{ s.t. } \val(b'(\ep))>0.
\end{gather*}

Seeing the facts that 
i) $\val(a(\ep))=0\Leftrightarrow \lim_{\ep\to 0^+}{a(\ep)}\in \CC\setminus \{0\}$,
\ \ 
ii) $\val(a(\ep))>0\Leftrightarrow \lim_{\ep\to 0^+}{a(\ep)}=0$
for $a(\ep)\in K$, we can decompose uniquely
the pL-polynomial $\ee^{-\Val(O;f_\ep)}f_\ep$ as:
\begin{equation}\label{ex-3}
\ee^{-\Val(O;f_\ep)}f_\ep=f^O+\Delta(f_\ep),
\end{equation}
where $f^O$ is the Laurent polynomial \ $
\lim_{\ep\to 0^+}
{\ee^{-\Val(O;f_\ep)}\tilde{f}_\ep^O}$\
and
$\snorm{\Delta(f_\ep)}$
becomes to zero when $\ep\to 0^+$.
Of course, $f^O$ does not depend on $\ep$.

We can derive a more general formula soon:

\begin{prop}\label{ex-prop-2.1}
Let $f_\ep$ be a pL-polynomial and $P$ be a point in $\RR^n$.
Then the pL-polynomial $\ee^{-\Val(P;f_\ep)}f_\ep\circ \mathcal{Q}_P$
 is uniquely decomposed as:
\begin{equation}\label{ex-4}
\ee^{-\Val(P;f_\ep)}f_\ep\circ\mathcal{Q}_P=
{f}^P+\Delta(f_\ep\circ\mathcal{Q}_P),
\end{equation}
where $f^P$ is the Laurent polynomial $\lim_{\ep\to 0^+}{\ee^{-\Val(P;f_\ep)}
\tilde{f}_\ep^P\circ\mathcal{Q}_P}$ and 
$\snorm{\Delta(f_\ep\circ \mathcal{Q}_P)}$ becomes zero when $\ep\to 0^+$.
\end{prop}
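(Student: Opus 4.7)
The plan is to reduce the general statement to the origin case already established in equation (\ref{ex-3}) by applying the two preceding lemmas in sequence. The point is that $f_\ep\circ\mathcal{Q}_P$ is itself a pL-polynomial, so we may apply the decomposition (\ref{ex-3}) to it at the origin $O$, and then translate everything back using Lemma \ref{lemma-ex-2} and Lemma \ref{lemma-ex-3}.

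Concretely, I would first apply the origin-case decomposition (\ref{ex-3}) to the pL-polynomial $f_\ep\circ\mathcal{Q}_P$. This yields
\[
\ee^{-\Val(O;\,f_\ep\circ\mathcal{Q}_P)}(f_\ep\circ\mathcal{Q}_P)
=(f\circ\mathcal{Q}_P)^{O}+\Delta(f_\ep\circ\mathcal{Q}_P),
\]
where by construction $(f\circ\mathcal{Q}_P)^O=\lim_{\ep\to 0^+}\ee^{-\Val(O;\,f_\ep\circ\mathcal{Q}_P)}\widetilde{(f_\ep\circ\mathcal{Q}_P)}^{O}$ and the remainder has norm tending to zero. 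Next, Lemma \ref{lemma-ex-2} identifies $\Val(O;f_\ep\circ\mathcal{Q}_P)$ with $\Val(P;f_\ep)$, so the scalar prefactor $\ee^{-\Val(O;\,f_\ep\circ\mathcal{Q}_P)}$ equals $\ee^{-\Val(P;f_\ep)}$, exactly as needed on the left-hand side of (\ref{ex-4}). Then Lemma \ref{lemma-ex-3} replaces $\widetilde{(f_\ep\circ\mathcal{Q}_P)}^{O}$ inside the limit by $\tilde{f}_\ep^P\circ\mathcal{Q}_P$, and hence identifies $(f\circ\mathcal{Q}_P)^O$ with the promised Laurent polynomial $f^P=\lim_{\ep\to 0^+}\ee^{-\Val(P;f_\ep)}\tilde{f}_\ep^P\circ\mathcal{Q}_P$. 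This already gives existence of the decomposition in the form (\ref{ex-4}).

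For uniqueness, I would argue exactly as in the derivation of (\ref{ex-3}): using the dichotomy (i)/(ii) recalled just before (\ref{ex-3}), any pL-polynomial of the form $\ee^{-\Val(P;f_\ep)}f_\ep\circ\mathcal{Q}_P$ splits uniquely into the sum of its valuation-zero part (whose $\ep\to 0^+$ limit is a nonzero Laurent polynomial) and its valuation-positive part (whose coefficients tend to $0$, and so whose norm tends to $0$). Since $f^P$ has coefficients in $\CC\setminus\{0\}$ and $\snorm{\Delta(f_\ep\circ\mathcal{Q}_P)}\to 0$, the two parts in (\ref{ex-4}) are exactly these uniquely determined summands.

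There is no real obstacle here; everything is bookkeeping. The only point that deserves a sentence of care is verifying that the prefactor $\ee^{-\Val(P;f_\ep)}$ correctly normalizes so that the limit $f^P$ exists as a (nonzero) Laurent polynomial rather than blowing up or vanishing, and this is precisely what Lemma \ref{lemma-ex-2} guarantees by matching the normalizations at $O$ and at $P$.
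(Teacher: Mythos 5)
Your proposal is correct and follows exactly the paper's own route: substitute $f_\ep\circ\mathcal{Q}_P$ into the origin decomposition (\ref{ex-3}) and then invoke Lemmas \ref{lemma-ex-2} and \ref{lemma-ex-3} to identify the normalizing factor and the truncation. The extra remarks on uniqueness merely make explicit what the paper leaves implicit, so there is nothing to change.
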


\begin{proof}
To prove this, it is sufficient to substitute 
$f_\ep\mapsto f_\ep\circ\mathcal{Q}_P$ to
$(\ref{ex-3})$ and to use
Lemmas \ref{lemma-ex-2} and \ref{lemma-ex-3}.
\end{proof}

Hearafter we denote $\mathcal{R}^P(f_\ep):
=\ee^{-\Val(P;f_\ep)}f_\ep\circ\mathcal{Q}_P$
and $\Delta^P:=\Delta(f_\ep\circ\mathcal{Q}_P)$ for simplicity. Then 
(\ref{ex-4}) is expressed as $\mathcal{R}^P(f_\ep)=f^P+\Delta^P$.
We call this expression the \textit{canonical expression of $f_\ep$
at $P$}. The Laurent polynomial $f^P$ is considered as the \lq main part\rq\,
of $\mathcal{R}^{P}(f_\ep)$. We call it the \textit{$P$-truncation of $f_\ep$}.
Note that $\mathcal{R}^P(f_\ep)$ is continuous with respect to
$\ep$ and $P$, but $f^P$ and $\Delta^P$ are continuous with respect to
$\ep$ only.

\subsection{Approximation theorem (local version)}

Let $M$ be a smooth submanifold of a smooth manifold $X$.
A \textit{tubular neighbourhood} of $M$ in $X$ is a submanifold $N\subset X$
such that (i) $M\subset \mathrm{Int}N$, (ii) there exists a smooth retraction
$p:N\to M$ such that $p^{-1}(x)$ is diffeomorphic to the ($\dim X-\dim M$)-dimensional
ball for any $x\in M$.
If $X$ is equipped with a metric, a tubular neighbourhood $N$ of $M$ is called a
\textit{tubular $\mu$-neighbourhood} when any fibre $p^{-1}(x)$ is contained in a
ball of radius $\mu$ centred at $x$.

Now we introduce a flat metric into the space $\RR^n\times U^n$.
Let $d_{\RR^n}$ be the distance function over $\RR^n$
defined by Euclidean metric and $d_{U^n}$ be the distance function over $U^n$
defined by the standard flat metric of torus $U$.
Define the distance function 
`$\mathrm{dist}_{(\ep)}$' over $\RR^n\times U^n$
by the formula 
$\mathrm{dist}_{(\ep)}:=\ep^{-1}d_{\RR^n}+d_{U^n}$.
When a tubular neighbourhood contained in $\RR^n\times U^n$
is also a tubular $\mu$-neighbourhood with respect to
$\mathrm{dist}_{(\ep)}$, it is called a
\textit{tubular $(\mu,\ep)$-neighbourhood}.

\begin{rem}
Readers might suspect that it would be unnecessary to define such a
complicated 
distance $\mathrm{dist}_{(\ep)}$.
In fact, it is enough to consider the simpler distance function $d_{\RR^n}+d_{U^n}$
in order to prove the approximation theorem \ref{approximationtheorem}.
However, this is an essential procedure
for the method of approximation.
See Remark \ref{addrem}.
\end{rem}

The main role of tubular neighbourhoods is to formalise 
the approximation of hypersurfaces of $\RR^n\times U^n$.
For later arguments, it is convenient to consider some special class of
tubular neighbourhoods.
The tubular neighbourhood $p:N\to M$ is called \textit{normal}
if (i) any fibre $p^{-1}(x)$ consists of segments of geodesics,
(ii) any fibre $p^{-1}(x)$ intersects with $M$ orthogonally at $x$.
Note that two normal tubular neighbourhoods $p:N\to M$ and $p':N'\to M$
coincides with each other on $N\cap N'$.
\bigskip

Denote the subset 
$\{x=(x_1,\dots,x_n)\in\CC\RR^n\,\vert\,
1/r<\zet{x_i}< r,\ \forall i\}$ by $D(r)$ 
for a positive number $r>1$.
Let $\Lambda$ be a finite subset of $\ZZ^n$,
and
$f=\sum_{w\in\Lambda}{\alpha_w x^w}$ and
$g=\sum_{w\in\Lambda}{\beta_w x^w}$ 
be Laurent polynomials. Define $F:=f+g$.
We consider the behaviour of two algebraic sets $V_{\CC\RR^n}(F)$ and $V_{\CC\RR^n}(f)$
when $\snorm{g}$ goes to zero without changing $f$.
Assume that $V_{\CC\RR^n}(f)$ is a smooth hypersurface
of $\CC\RR^n$.
Standard arguments based on the Implicit Function Theorem give us the following
lemma:

\begin{lemma}\label{lemma-ex-4}
Fix $r>1$ arbitrarily.
Then,
for arbitrary $\mu_0>0$, there exists a positive number $\delta_0$ such that:
$$
\norm{g}<\delta_0
\ \Rightarrow\ 
\left(
\begin{array}{ll}
V_{D(r)}(f) \text{ is a smooth section of} \\
\text{a normal tubular $\mu_0$-neighbourhood }
N\to V_{D(r)}(F).
\end{array}
\right).
$$
\end{lemma}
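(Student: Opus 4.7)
The plan is a uniform Implicit Function Theorem argument.

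First, since $V_{\CC\RR^n}(f)$ is smooth, $\nabla f$ is nowhere zero on it. Fix an auxiliary radius $r'>r$; the set $K := V_{\CC\RR^n}(f)\cap \overline{D(r')}$ is compact, and $|\nabla f|$ admits a lower bound $c>0$ on a closed neighbourhood $W$ of $K$ inside $\overline{D(r')}$. Because the support $\Lambda$ is finite and $\overline{D(r')}$ is compact, there is a constant $M=M(\Lambda,r')$ with $\max(|g|,|\partial_{x_i}g|)\leq M\snorm{g}$ on $\overline{D(r')}$ for every Laurent polynomial $g$ supported on $\Lambda$. Hence, for $\snorm{g}$ small, $|\nabla F|\geq c/2$ throughout $W$, and the tangent spaces of $V_{\CC\RR^n}(F)$ at points of $W$ are uniformly close to those of $V(f)$.

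Second, I would construct a normal tubular neighbourhood $p:N\to V_{\overline{D(r')}}(F)$ by exponentiating geodesic segments of length $\mu_0$ normal to $V(F)$. The uniform lower bound on $|\nabla F|$, together with compactness of $W$, ensures that after shrinking $\snorm{g}$ further these segments are pairwise disjoint, producing a genuine normal tubular $\mu_0$-neighbourhood.

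Third, I would realise $V_{D(r)}(f)$ as a section. For $x\in V_{D(r)}(f)$, the identity $F(x)=g(x)$ combined with $|\nabla F|\geq c/2$ forces the nearest point $y\in V(F)$ to satisfy $|x-y|\leq 2M\snorm{g}/c$. Choosing $\delta_0$ so that this quantity is smaller than both $\mu_0$ and $\mathrm{dist}(D(r),\partial D(r'))$, we get $x\in N$ and $p(x)=y\in V_{D(r)}(F)$. Conversely, for each $y\in V_{D(r)}(F)$, the restriction $f|_{p^{-1}(y)}$ has derivative close to $|\nabla f(y)|\geq c$ in the fibre direction (since $\nabla F\approx\nabla f$), hence is strictly monotonic; as $|f(y)|=|g(y)|\leq M\snorm{g}$ is tiny compared to the length $\mu_0$ of the fibre, $f$ changes sign on the fibre and has a unique zero there. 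This zero varies smoothly in $y$ by the Implicit Function Theorem, yielding the desired smooth section.

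The main obstacle is the uniformity of all the estimates: one must pass from the coefficient-level hypothesis $\snorm{g}<\delta_0$ to uniform $C^1$-bounds on $g$ over the compact set $\overline{D(r')}$, and from there to uniform control of the normal exponential map of $V(F)$ (injectivity radius, disjointness of normal geodesic segments, smoothness of the section). All of this reduces to standard compactness arguments once $\Lambda$ and $r'$ are fixed, since the coefficient-to-polynomial map $\CC^\Lambda\to C^k(\overline{D(r')})$ is a continuous linear map between finite-dimensional normed spaces.
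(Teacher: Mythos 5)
Your plan is exactly the ``standard arguments based on the Implicit Function Theorem'' that the paper invokes for this lemma without giving any details: pass from the coefficient norm $\snorm{g}$ to uniform $C^1$-bounds on a slightly larger compact set $\overline{D(r')}$ (legitimate because the support $\Lambda$ is fixed and finite), get a uniform lower bound on $\nabla f$, hence on $\nabla F$, near $V(f)\cap\overline{D(r')}$, build the normal tubular neighbourhood of $V(F)$, and locate $V(f)$ as a smooth section of it. So in structure you are doing what the paper intends.

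One step, as written, does not go through: the varieties here are \emph{complex} hypersurfaces of $\CC\RR^n$, so the fibres $p^{-1}(y)$ of the normal tubular neighbourhood are real two-dimensional discs (real codimension $2$), and $f$ is complex-valued. Your argument that $f|_{p^{-1}(y)}$ is ``strictly monotonic'' and ``changes sign'' on the fibre is an intermediate-value argument for a real function on a one-dimensional fibre, and it has no meaning in this setting. The fix is routine but should be stated: since the tangent space of $V(F)$ at $y$ is a complex subspace, its Euclidean orthogonal complement is a complex line, so the fibre is (up to the uniform $C^1$-closeness of $\nabla F$ to $\nabla f$) a complex disc on which $f$ is holomorphic with derivative bounded below by $c/2$ and value at the centre bounded by $M\snorm{g}$; Rouch\'e's theorem (or a quantitative holomorphic inverse function theorem) then gives existence and uniqueness of the zero of $f$ in the fibre, and smooth (indeed holomorphic) dependence on $y$ follows from the Implicit Function Theorem as you say. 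A similar two-dimensional reading should be applied to your phrase ``geodesic segments of length $\mu_0$'': the fibres are normal discs of radius $\mu_0$, and their disjointness for $\snorm{g}$ small again follows from the compactness and curvature/injectivity-radius bounds you already invoke. With these adjustments your proof is correct and is essentially the argument the paper has in mind.
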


We call $f_\ep$  
\textit{non-singular} if there exists a positive number $\delta>0$
such that $\ep\in(0,\delta]\Rightarrow V_{\CC\RR^n}(f_\ep)$
is non-singular,
and we call $f_\ep$ \textit{totally non-singular}
if, for all $P\in TV_{\RR^n}(f_\ep)$, $f^P$ is non-singular 
(with the usual meaning).
\begin{rem}
Totally non-singularity does not imply non-singularity.
\end{rem}

Recall that $la(\ep):\CC\RR^n\to \RR^n\times U^n$ is a diffeomorphism
between two topological spaces.
The following theorem is an essential part of the method of approximation.

\begin{theorem}[Approximation theorem at the origin]
\label{ex-thm-3.1}
Assume that $f_\ep$ is a non-singular and totally non-singular pL-polynomial, and
that the origin $O=(0,\!\dots,\!0)$ of $\RR^n$ is contained in 
$TV_{\RR^n}(f_\ep)$.
Let $\mathcal{R}^Of_\ep=f^O+\Delta^O$ be the canonical expression of $f_\ep$
at $O$.
Then
for arbitrary $\mu>0$, there exists a positive number $\delta>0$ 
such that
\begin{align*}
&\norm{\Delta^O}<\delta\Rightarrow\\
&\left(
\begin{array}{l}
\mbox{There exists an open neighbourhood $W_\ep$ of $O\in\RR^n$ such that}\\
\{W_\ep\times U^n\} \bigcap \left\{la(\ep)(V_{\CC\RR^n}(f^O))\right\}
\mbox{ is a smooth 
section of}\\
\mbox{ a tubular $(\mu,\ep)$-neighbourhood }
N\to \{W_\ep\times U^n\} \bigcap 
\left\{la(\ep)(V_{\CC\RR^n}(f_\ep))\right\}.
\end{array}
\right).
\end{align*}
Moreover, we can assume that 
the preimage of this tubular neighbourhood by $la(\ep)$ is normal.
\end{theorem}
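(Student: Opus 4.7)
The strategy is to reduce the theorem, via the diffeomorphism $la(\ep)$, to the classical Lemma \ref{lemma-ex-4}, which lives entirely on the $\CC\RR^n$ side in the standard Euclidean metric. First I observe that multiplying a Laurent polynomial by the nonzero scalar $\ee^{-\Val(O;f_\ep)}$ does not change its zero set, and since $\mathcal{Q}_O=\mathrm{id}$ we have $\mathcal{R}^O f_\ep=\ee^{-\Val(O;f_\ep)}f_\ep$, so $V_{\CC\RR^n}(f_\ep)=V_{\CC\RR^n}(f^O+\Delta^O)$. Because $f_\ep$ is totally non-singular, the Laurent polynomial $f^O$ is non-singular in the usual sense, so $V_{\CC\RR^n}(f^O)$ is smooth. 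Fixing some $r>1$ and applying Lemma \ref{lemma-ex-4} with $f=f^O$ and $g=\Delta^O$ shows: for any prescribed $\mu_0>0$ there exists $\delta_0>0$ such that $\snorm{\Delta^O}<\delta_0$ implies that $V_{D(r)}(f^O)$ is a smooth section of a normal tubular $\mu_0$-neighbourhood $N_0\to V_{D(r)}(f_\ep)$ in $\CC\RR^n$.

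Next I transfer this picture through $la(\ep)$, setting $W_\ep:=l(\ep)(D(r))=(-\ep\log r,\ep\log r)^n$, which is indeed an open neighbourhood of $O$ since $r>1$. Because $la(\ep)$ is a diffeomorphism mapping $D(r)$ onto $W_\ep\times U^n$, the image $la(\ep)(N_0)$ is a tubular neighbourhood of $\{W_\ep\times U^n\}\cap la(\ep)(V_{\CC\RR^n}(f_\ep))$ containing $\{W_\ep\times U^n\}\cap la(\ep)(V_{\CC\RR^n}(f^O))$ as a smooth section, and the preimage of this tubular neighbourhood under $la(\ep)$ is, by construction, the normal one $N_0$, yielding the final assertion.

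The remaining and only substantive point is converting the Euclidean radius $\mu_0$ on the $\CC\RR^n$ side into the $(\mu,\ep)$-radius on the $\RR^n\times U^n$ side. Writing $x_j=r_je^{\ai\theta_j}$ and computing the differential of $la(\ep)$, I find that the length element associated with $\mathrm{dist}_{(\ep)}=\ep^{-1}d_{\RR^n}+d_{U^n}$ pulls back to the $\ep$-independent form
\[
\sum_{j=1}^{n}\bigl(d(\log r_j)^2+d\theta_j^2\bigr),
\]
because the factor $\ep^{-1}$ cancels exactly the $\ep$ in $y_j=-\ep\log r_j$. On the compact region $\overline{D(r)}$, where $1/r\le r_j\le r$, this pulled-back metric is bi-Lipschitz-equivalent, with constants $C=C(r)$ independent of $\ep$, to the standard Euclidean metric on $\CC\RR^n$. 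Hence, given $\mu$, I choose $\mu_0:=\mu/C(r)$ and the corresponding $\delta_0$; then every fibre of $N_0$, a Euclidean geodesic segment of length $\le\mu_0$, is carried by $la(\ep)$ into a $\mathrm{dist}_{(\ep)}$-ball of radius $\le\mu$, so $la(\ep)(N_0)$ is a tubular $(\mu,\ep)$-neighbourhood.

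The main obstacle is precisely this metric comparison: one has to verify that the scaling $\ep^{-1}$ in $\mathrm{dist}_{(\ep)}$ is the \emph{only} right normalisation that makes $la(\ep)$ into a uniformly bi-Lipschitz map on $D(r)$ with constants independent of $\ep$, and that fibres of the tubular neighbourhood transfer to sets whose \emph{full} diameter (not just one component) is controlled by $\mu$. Everything else is a straightforward application of Lemma \ref{lemma-ex-4} and the diffeomorphism property of $la(\ep)$.
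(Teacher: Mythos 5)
Your proposal is correct and follows essentially the same route as the paper: apply Lemma \ref{lemma-ex-4} to $f^O+\Delta^O$ on $D(r)$, set $W_\ep:=l(\ep)(D(r))$ and $N:=la(\ep)(\mathcal{N})$, and convert the Euclidean radius into a $\mathrm{dist}_{(\ep)}$-radius using the $\ep$-independence of the pulled-back metric (the paper just writes the explicit constant $\mu_0=(2\sqrt{n}r)^{-1}\mu$ where you invoke a bi-Lipschitz constant $C(r)$ on $\overline{D(r)}$).
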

\begin{proof}
Fix a positive number $r>1$ arbitrarily and let $\mu_0:=(2\sqrt{n}r)^{-1}\!\cdot\mu$.
To begin with, 
note that we can derive soon the following relation by direct calculations:
\[
\mathrm{dist}_{\CC\RR^n}(\alpha,\beta)<\mu_0,\ \
\alpha,\beta\in D(r)\quad\Rightarrow\quad 
\mathrm{dist}_{(\ep)}(la(\ep)(\alpha),la(\ep)(\beta))<\mu.
\]
By Lemma \ref{lemma-ex-4}, 
there exists a small number $\delta>0$
such that \lq$\norm{\Delta^O}<\delta\Rightarrow 
V_{D(r)}(f^O)$
is a smooth section of a normal tubular $\mu_0$-neighbourhood 
$\mathcal{N}\to V_{D(r)}(f_\ep)$
$(=V_{D(r)}(\mathcal{R}^Of_\ep))$\rq.

Therefore, it is sufficient to define $W_\ep:=l(\ep)(D(r))$ and
$N:=la(\ep)(\mathcal{N})$.
Clearly, we have $W_\ep\ni O$.
\end{proof}
The slight extension of 
Theorem \ref{ex-thm-3.1} can be proved:
\begin{cor}[Approximation theorem (local version)]
\label{ex-cor-3.1}
Assume $f_\ep$ is a pL-polynomial satisfying the same condition stated in 
Theorem \ref{ex-thm-3.1}.
Let $P$ be a point in $TV_{\RR^n}(f_\ep)$ and $\mathcal{R}^Pf_\ep=f^P+\Delta^P$
be the canonical expression of $f_\ep$ at $P$.
Then
for arbitrary $\mu>0$, there exists a positive number $\delta>0$ 
such that
\begin{align*}
&\norm{\Delta^P}<\delta\Rightarrow\\
&\left(
\begin{array}{l}
\mbox{There exists an open neighbourhood $W_\ep$ of $P\in\RR^n$ such that}\\
\{W_\ep\times U^n\} \bigcap 
\left\{(T_P\times \mathrm{id}_{U^n})\circ
la(\ep)(V_{\CC\RR^n}(f^P))\right\}
\mbox{ is a smooth 
section of}\\
\mbox{ a tubular $(\mu,\ep)$-neighbourhood }
N\to \{W_\ep\times U^n\} \bigcap \left\{la(\ep)(V_{\CC\RR^n}(f_\ep))\right\}.
\end{array}
\right).
\end{align*}
Moreover, if $f^{P}=f^Q$ ($P,Q\in TV_{\RR^n}(f_\ep)$), we can take same $\delta$
for these two points.
\end{cor}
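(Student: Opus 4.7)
The plan is to reduce the corollary to Theorem \ref{ex-thm-3.1} by transporting everything from $P$ to the origin via the map $\mathcal{Q}_P$ and then pushing the conclusion back via the translation $T_P\times\mathrm{id}_{U^n}$.

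First I would apply Proposition \ref{ex-prop-2.1} and Lemmas \ref{lemma-ex-2}, \ref{lemma-ex-3} to identify the canonical expression of $f_\ep\circ\mathcal{Q}_P$ at $O$. Explicitly, since $\Val(O;f_\ep\circ\mathcal{Q}_P)=\Val(P;f_\ep)$ and $(\widetilde{f_\ep\circ\mathcal{Q}_P})^O=\tilde{f}_\ep^P\circ\mathcal{Q}_P$, the main part at $O$ of the pL-polynomial $f_\ep\circ\mathcal{Q}_P$ is exactly $f^P$, and the error is $\Delta^P$. In other words, the canonical expression of $f_\ep\circ\mathcal{Q}_P$ at the origin is $\mathcal{R}^O(f_\ep\circ\mathcal{Q}_P)=f^P+\Delta^P$. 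Moreover, $f_\ep\circ\mathcal{Q}_P$ inherits non-singularity and total non-singularity from $f_\ep$ (the tropical hypersurface is merely translated by $-P$ and the $Q$-truncations correspond to $(Q+P)$-truncations of $f_\ep$).

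Next I would feed $f_\ep\circ\mathcal{Q}_P$ into Theorem \ref{ex-thm-3.1}. Given $\mu>0$, the theorem supplies a $\delta>0$ and an open neighbourhood $W'_\ep$ of $O$ such that, whenever $\norm{\Delta^P}<\delta$, the set $\{W'_\ep\times U^n\}\cap la(\ep)(V_{\CC\RR^n}(f^P))$ is a smooth section of a tubular $(\mu,\ep)$-neighbourhood $N'\to \{W'_\ep\times U^n\}\cap la(\ep)(V_{\CC\RR^n}(f_\ep\circ\mathcal{Q}_P))$. I would then push this diagram forward by $T_P\times\mathrm{id}_{U^n}$, setting $W_\ep:=W'_\ep+P$ and $N:=(T_P\times\mathrm{id}_{U^n})(N')$. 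Using the identity $la(\ep)\circ\mathcal{Q}_P\circ la(\ep)^{-1}=T_P\times\mathrm{id}_{U^n}$ together with $\mathcal{Q}_P(V_{\CC\RR^n}(f_\ep\circ\mathcal{Q}_P))=V_{\CC\RR^n}(f_\ep)$ gives the desired tubular neighbourhood over $\{W_\ep\times U^n\}\cap la(\ep)(V_{\CC\RR^n}(f_\ep))$, while the section is transported to $\{W_\ep\times U^n\}\cap(T_P\times\mathrm{id}_{U^n})\circ la(\ep)(V_{\CC\RR^n}(f^P))$, precisely as required.

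The only point I expect to require a moment of care is verifying that $T_P\times\mathrm{id}_{U^n}$ is an isometry with respect to $\mathrm{dist}_{(\ep)}=\ep^{-1}d_{\RR^n}+d_{U^n}$, so that the tubular $(\mu,\ep)$-neighbourhood property survives the pushforward; this is immediate since both $d_{\RR^n}$ and $d_{U^n}$ are translation invariant. The final assertion, that two points $P,Q$ with $f^P=f^Q$ admit a common $\delta$, follows directly because Theorem \ref{ex-thm-3.1} depends on $P$ only through the Laurent polynomial $f^P$ appearing in the canonical expression: the radius $\delta$ produced by Lemma \ref{lemma-ex-4} applied to $f^P$ is the same for both.
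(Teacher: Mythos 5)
Your proposal is correct and follows essentially the same route as the paper: apply Theorem \ref{ex-thm-3.1} to the canonical expression $\mathcal{R}^P(f_\ep)=f^P+\Delta^P$ (i.e.\ to $f_\ep\circ\mathcal{Q}_P$ at the origin) and transport the resulting tubular neighbourhood by $T_P\times\mathrm{id}_{U^n}$, using $la(\ep)\circ\mathcal{Q}_P\circ la(\ep)^{-1}=T_P\times\mathrm{id}_{U^n}$ and the fact that this translation is an isometry for $\mathrm{dist}_{(\ep)}$, which also yields the common $\delta$ when $f^P=f^Q$. Your added checks (identifying the canonical expression at $O$ via Lemmas \ref{lemma-ex-2}--\ref{lemma-ex-3} and the inheritance of the non-singularity hypotheses by $f_\ep\circ\mathcal{Q}_P$) only make explicit what the paper leaves implicit.
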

\begin{proof}
Let $\tilde{T}:=T_P\times id_{U^n}$.
To prove the first statement, 
it is sufficient to
to apply Theorem \ref{ex-thm-3.1} to the canonical expression 
$\mathcal{R}^P(f_\ep)=f^P+\Delta^P$ and
to use the following equation:
\begin{gather*}
\tilde{T}\circ la(\ep)(V(\mathcal{R}^Pf_\ep))\!=\!
\tilde{T}\circ la(\ep)(V(f_\ep\circ \mathcal{Q}_P))\!=\!
\tilde{T}\circ la(\ep)\circ \mathcal{Q}_P^{-1}(V(f_\ep))=la(\ep)(V(f_\ep)).
\end{gather*}
The second statement follows from the fact that
$\tilde{T}$ does not change the metric.
\end{proof}

\subsection{Approximation theorem (global version)}

Theorem \ref{ex-thm-3.1} and Corollary \ref{ex-cor-3.1}
show us the existence of a small region
in which two varieties $V_{\CC\RR^n}(f_\ep)$ and $V_{\CC\RR^n}(f^P)$
are \lq similar\rq.
We extend this region by \textit{gluing} tubular neighbourhoods. 

As mentioned above, the tropical hypersurface $TV_{\RR^n}(f_\ep)$ is an union of
finitely many
(not necessarily bounded)
$(n-1)$-faces. This tropical hypersurface
is an image of $(n-1)$-faces of a convex $(n+1)$-polytope by the projection
$\RR^n\times\RR\to\RR^n$. Therefore,
naturally $TV_{\RR^n}(f_\ep)$ has a cell decomposition.
We express this decomposition as $TV_{\RR^n}(f_\ep)=\bigcup_{\lam\in\Lam}
{\mathcal{X}_\lam}$ formally.
The index set $\Lam$ is finite.

\begin{lemma}\label{ex-lemma-0.4}
For $P_1,P_2\in\mathcal{X}_\lam$, 
the truncations $f^{P_1}$ and $f^{P_2}$ coincide with each other.
\end{lemma}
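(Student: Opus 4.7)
The plan is to compute $f^P$ explicitly as a Laurent polynomial in $x$ and to observe that the result depends on $P$ only through the combinatorial datum $\Theta(P)$, after which the lemma reduces to the fact that $\Theta$ is constant on each cell of the decomposition.

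First I would unravel the definitions. Since $\ee = e^{-1/\ep}$, one has $\mathcal{Q}_P(x)^w = \ee^{w\cdot P}x^w$, hence
$$
\tilde f_\ep^{P}\circ\mathcal{Q}_P(x) \;=\; \sum_{w\in\Theta(P)} a_w(\ep)\,\ee^{w\cdot P}\,x^w.
$$
For every $w\in\Theta(P)$, the definition of $\Theta(P)$ forces $\val(a_w)+w\cdot P = \Val(P;f_\ep)$; multiplying by $\ee^{-\Val(P;f_\ep)}$ therefore replaces the exponent $w\cdot P$ by $-\val(a_w)$, which no longer involves $P$. Letting $\ep\to 0^+$ gives
$$
f^P \;=\; \sum_{w\in\Theta(P)} \alpha_w\, x^w, \qquad \alpha_w := \lim_{\ep\to 0^+} a_w(\ep)\,\ee^{-\val(a_w)}\in\CC\setminus\{0\},
$$
which is nothing but the leading Puiseux coefficient of $a_w(\ep)$ and, crucially, does not depend on $P$. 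In particular, $f^P$ is determined by $\Theta(P)$ alone.

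Next I would invoke the definition of the cell decomposition. By construction, $\mathcal{X}_\lam$ is the image under the projection $\RR^n\times\RR\to\RR^n$ of a single face of the graph $\mathfrak{a}$ of $\Val(\cdot\,;f_\ep)$, and each such face is by definition a maximal locus on which a fixed subset $S\subset\ZZ^n$ of the affine functions $X\mapsto\val(a_w)+w\cdot X$ simultaneously realise the minimum $\Val(X;f_\ep)$. Equivalently, $\Theta(P)=S$ for every $P\in\mathcal{X}_\lam$. Combining this with the explicit formula above yields $f^{P_1}=f^{P_2}$ whenever $P_1,P_2\in\mathcal{X}_\lam$.

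There is no real obstacle here: the whole lemma is a bookkeeping consequence of the definitions once $f^P$ has been put in the form displayed above. The only point requiring mild attention is the convention that the cells $\mathcal{X}_\lam$ be taken relatively open, so that $\Theta$ is actually constant throughout $\mathcal{X}_\lam$ rather than jumping on a boundary stratum; this is however built into the natural polyhedral subdivision coming from $\mathfrak{a}$.
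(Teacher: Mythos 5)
Your proof is correct and follows essentially the same route as the paper: it rests on the same two facts, namely that $f^P$ is determined by $\Theta(P)$ with $P$-independent leading coefficients, and that $\Theta$ is constant on the (relatively open) cells coming from the faces of the graph $\mathfrak{a}$, which the paper phrases contrapositively. Your explicit computation of $f^P$ and the remark about taking cells relatively open are, if anything, slightly more careful than the paper's own argument.
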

\begin{proof}
Let $\mathfrak{a}\in\RR^n\times\RR$ be the graph of $\Val(X;f_\ep)$ and 
$p:\mathfrak{a}\stackrel{\sim}{\to}\RR^n$ be a restriction of the projection
$\RR^n\times\RR\to\RR^n$ to $\mathfrak{a}$.

Assume $f^{P_1}\neq f^{P_2}$. This means $\Theta(P_1)\neq \Theta(P_2)$.
We can assume $w=(w_1,\dots,w_n)$ $\in \Theta(P_1)$ and $w\not\in \Theta(P_2)$.
It follows that $p^{-1}(P_1)$ is contained in the face 
$\mathfrak{a}\cap
\{\Val(X;f_\ep)=\val(a_w)+w_1X_1+\dots+w_nX_n\}$, and that $p^{-1}(P_2)$ is not.
Then $P_1$ and $P_2$ are not contained in a same cell.
\end{proof}

We say that a smooth submanifold $V_1\subset X$ is 
$(\mu,\ep)$-\textit{approximated by} 
another smooth submanifold
$V_2\subset X$ \textit{around} a point $P\in X$
if there exists an open neighbourhood $W\ni P$ such that
$W\cap V_2$ is a section of a tubular $(\mu,\ep)$-neighbourhood 
$N\to W\cap V_1$.
By Corollary \ref{ex-cor-3.1} and 
Lemma \ref{ex-lemma-0.4}, there exists positive $\delta$ 
(that does not depend on $P$ because $\Lam$ is finite!)
such that
\[
\norm{\Delta^P}<\delta\ \Rightarrow\
\begin{array}{r}
la(\ep)(V(f_\ep))
\text{ is $(\mu,\ep)$-approximated by }
(T_P\times\mathrm{id})\circ la(\ep)(V(f^P))\\ 
\!\text{ around } P.
\end{array}
\]

\begin{lemma}
There exists a positive number $\zeta$ such that\ 
$\ep\in(0,\zeta]\Rightarrow\norm{\Delta^P}\!<\delta,\ \forall P\in 
TV_{\RR^n}(f_\ep)$.
\end{lemma}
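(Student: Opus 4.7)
The plan is to reduce the assertion to a uniform estimate on each cell of the finite decomposition $TV_{\RR^n}(f_\ep) = \bigcup_{\lam \in \Lam} \mathcal{X}_\lam$. On each relatively open cell $\mathcal{X}_\lam$, by Lemma~\ref{ex-lemma-0.4} the truncation $f^P$ is a fixed polynomial $f^{(\lam)}$, and in particular $\Theta(P)$ is a fixed subset $\Theta_\lam \subset \ZZ^n$. Writing $f_\ep = \sum_w a_w(\ep)\, x^w$, the coefficient of $x^w$ in $\Delta^P = \mathcal{R}^P f_\ep - f^P$ has a clean form: for $w \in \Theta_\lam$ it equals $a_w(\ep)\ee^{-\val(a_w)} - c_w$, which does not depend on $P$ and tends to $0$ as $\ep \to 0^+$; for $w \notin \Theta_\lam$ it equals $a_w(\ep)\ee^{w\cdot P - \Val(P;f_\ep)}$, which has positive valuation in $\ep$ but depends on $P$.

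For the $P$-independent coefficients, uniformity over $P$ is automatic, and since the support of $f_\ep$ is finite, a single threshold $\zeta_1$ makes all of them smaller than $\delta/2$ simultaneously. For the $P$-dependent coefficients, I would use the estimate $|a_w(\ep)\ee^{w\cdot P - \Val(P;f_\ep)}| \leq M_w\, \ee^{\gamma(P,w)}$, where $\gamma(P,w) := \val(a_w) + w\cdot P - \Val(P;f_\ep) > 0$ and $M_w$ is an upper bound for $|a_w(\ep)\ee^{-\val(a_w)}|$ for small $\ep$. The core of the proof is to secure a strictly positive lower bound $\gamma(P,w) \geq \gamma_\lam > 0$ on $\mathcal{X}_\lam$ uniformly in $w \notin \Theta_\lam$; once this is in hand, $\ee^{\gamma(P,w)} \leq \ee^{\gamma_\lam} \to 0$ uniformly as $\ep \to 0^+$. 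Taking $\zeta$ to be the minimum of the resulting thresholds across the finitely many pairs $(\lam, w)$ would then conclude the proof.

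The main obstacle is securing the uniform lower bound on $\gamma(P,w)$: by construction $\gamma(P,w) \to 0$ whenever $P$ approaches a lower-dimensional face of $\mathcal{X}_\lam$ at which $w$ joins the set of minimizers of $\Val$, and $\mathcal{X}_\lam$ itself may be unbounded. The resolution I would pursue is to peel off a small open collar of the boundary into the neighboring lower-dimensional cells: on those neighboring cells $w$ now belongs to $\Theta(P)$, so its contribution to $\Delta^P$ falls under the $P$-independent case already handled above; on the complementary part of $\mathcal{X}_\lam$, $\gamma(P,w)$ is bounded below by a positive constant by the polyhedral nature of $\Val$ (and, for unbounded cells, by the fact that $\gamma$ is an affine function of $P$ that only grows along the unbounded directions). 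The refined finite covering then admits the required uniform bound, and the minimum of the $\ep$-thresholds over its cells produces the desired $\zeta$.
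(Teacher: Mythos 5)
Your reduction to the finitely many cells and your dichotomy for the coefficients of $\Delta^P$ are correct, and you have located the real difficulty; but the collar-peeling step is where the argument breaks. The set $\Theta(P)$, and with it $\Delta^P$, is determined by the point $P$ alone, not by which piece of a refined covering you decide to assign $P$ to. For $P\in\mathcal{X}_\lam$ lying in your collar, the monomial $w$ still satisfies $w\notin\Theta(P)$, so its coefficient in $\Delta^P$ is still $a_w(\ep)\,\ee^{\gamma(P,w)}$ with $\gamma(P,w)=\val(a_w)+w_1P_1+\dots+w_nP_n-\Val(P;f_\ep)$; and if $P$ is chosen so close to the adjacent face that $\gamma(P,w)\leq\ep$, then $\vert\ee^{\gamma(P,w)}\vert=e^{-\gamma(P,w)/\ep}\geq e^{-1}$, so this coefficient has modulus of order $\vert c_w\vert/e$, where $c_w\neq 0$ is the limit of $a_w\ee^{-\val(a_w)}$ --- a constant independent of $\ep$. (Already for $f_\ep=1+x+y$ and $P=(0,Y)$ with $0<Y\leq\ep$ one finds $\norm{\Delta^P}=e^{-Y/\ep}\geq e^{-1}$.) Hence $\sup_{P\in\mathcal{X}_\lam}\norm{\Delta^P}$ does not tend to $0$ as $\ep\to 0^+$, and no threshold $\zeta$ produced along your lines yields $\norm{\Delta^P}<\delta$ for literally every $P\in TV_{\RR^n}(f_\ep)$ once $\delta$ is small. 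The obstruction you identified is genuine and cannot be removed by re-covering: a uniform bound is only available outside an $\ep$-dependent collar of the lower-dimensional cells (which is all the gluing of tubular neighbourhoods really uses, since the neighbourhoods attached to the adjacent cells cover that collar), but that is a weaker statement than the one you set out to prove, and your write-up does not make this restriction.

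For comparison, the paper's own proof takes a different and much coarser route: it fixes $\ep$, observes that $P\mapsto\norm{\Delta^P}$ is continuous on each of the finitely many cells $\mathcal{X}_\lam$, and merely checks that the limits as $P\to\partial\mathcal{X}_\lam$ and as $\zet{P}\to\infty$ along an unbounded cell are finite; that is, it establishes boundedness on each cell for fixed $\ep$ rather than the uniform positive lower bound on the exponents $\gamma(P,w)$ that your plan requires. So the boundary phenomenon you uncovered is not resolved there either --- your analysis of exactly where uniformity fails is sharper than the paper's --- but your proposed repair is not a proof: the key assertion that ``in the collar the contribution falls under the $P$-independent case'' is false, because moving $P$ into another piece of the covering does not change $\Theta(P)$ or $\Delta^P$, and that is the gap.
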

\begin{proof}
Fix a small $0<\ep<1$. (Then $\ee=e^{-1/\ep}<1$).
It is sufficient to prove that 
$\left\{
\norm{\Delta^P}\,\left\vert\,P\in\mathcal{X}_\lam\right.
\right\}$ 
has an upper bound for each $\mathcal{X}_\lam$.
(Note that $\Lam$ is finite).
Clearly, $\norm{\Delta^P}$ is continuous with respect to
$P\in\mathcal{X}_\lam$.

(i) When the closure of $\mathcal{X}_\lam$ is compact,
it is sufficient to confirm the fact that 
$\lim_{P\to\partial\mathcal{X}_\lam}\norm{\Delta^P}$ is finite.

(ii) When $\mathcal{X}_\lam$ is unbounded,
we should consider the behaviour of $\norm{\Delta^P}$ when $\zet{P}\to\infty$.
The pL-polynomial $\Delta^P$ is of the form
$$\textstyle
\sum'_{w}{\ee^{\val(a_w)+w_1P_1+\dots+w_nP_n-\Val(P;f_\ep)}x^w},
$$
where $w$ runs over all the element of $\ZZ^n$ such that
$\val(a_w)+w_1P_1+\dots+w_nP_n-\Val(P;f_\ep)>0$.
Therefore, when $P$ goes infinity along $\mathcal{X}_\lam$,
the function $\val(a_w)+w_1P_1+\dots+w_nP_n-\Val(P;f_\ep)$ should 
grow to infinity or be constant.
Then the limit $\lim_{\zet{P}\to\infty,P\in\mathcal{X}_\lam}
\norm{\Delta^P}$ should be finite.
\end{proof}
\bigskip

Recall that two normal tubular neighbourhoods (or their images by $la(\ep)$)
coincides with each other on their intersection.
By gluing these local tubular neighbourhoods, we obtain a global tubular neighbourhood
which gives us the approximation theorem:

\begin{theorem}[Approximation theorem (global version)]
\label{approximationtheorem}
Let $f_\ep$ be a non-singular and totally non-singular pL-polynomial. 
Denote the canonical expression of $f_\ep$ at $P$ by
$\mathcal{R}^Pf_\ep=f^P+\Delta^P$.
Then
for arbitrary $\mu>0$, there exists a positive number $\zeta>0$ 
such that
\begin{align*}
&\ep\in(0,\zeta]\Rightarrow\\
&\left(
\begin{array}{l}
\mbox{There exists a tubular $(\mu,\ep)$-neighbourhood 
$N\to la(\ep)(V_{\CC\RR^n}(f_\ep))$ 
such that}\\
(T_P\times \mathrm{id}_{U^n})\circ la(\ep)(V_{\CC\RR^n}(f^P))\mbox{ is a smooth 
section of it}
\mbox{ around $P$.}
\end{array}
\right).
\end{align*}
\end{theorem}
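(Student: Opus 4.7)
The strategy is to upgrade Corollary~\ref{ex-cor-3.1}, which gives an approximation in some open neighbourhood of each single point $P\in TV_{\RR^n}(f_\ep)$, into a uniform statement over all of $TV_{\RR^n}(f_\ep)$, and then to glue the resulting local normal tubular neighbourhoods into a single global one using the compatibility remark that two normal tubular neighbourhoods over the same submanifold automatically agree on the overlap of their domains.

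First I would invoke the cell decomposition $TV_{\RR^n}(f_\ep)=\bigcup_{\lam\in\Lam}\mathcal{X}_\lam$ with $\Lam$ finite. By Lemma~\ref{ex-lemma-0.4}, the truncation $f^P$ depends only on the cell $\mathcal{X}_\lam$ containing $P$, so there are only finitely many distinct truncations. The ``moreover'' clause of Corollary~\ref{ex-cor-3.1} then lets me choose one $\delta_\lam$ per cell, and setting $\delta:=\min_\lam \delta_\lam$ gives a single threshold that works simultaneously at every $P\in TV_{\RR^n}(f_\ep)$. Combined with the lemma immediately preceding the theorem, there exists $\zeta>0$ such that for all $\ep\in(0,\zeta]$ and all $P\in TV_{\RR^n}(f_\ep)$, the bound $\snorm{\Delta^P}<\delta$ holds, so the local approximation conclusion of Corollary~\ref{ex-cor-3.1} is in force at every point $P$ at once.

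Next, for each $P\in TV_{\RR^n}(f_\ep)$ I take the local normal tubular neighbourhood $N_P\to la(\ep)(V_{\CC\RR^n}(f_\ep))\cap (W_\ep(P)\times U^n)$ produced by the corollary (using the ``moreover'' of Theorem~\ref{ex-thm-3.1} and the fact that the translation $T_P\times\mathrm{id}_{U^n}$ preserves the metric $\mathrm{dist}_{(\ep)}$, so normality is inherited after pulling back by $la(\ep)$). To glue them, I define $N:=\bigcup_P N_P$ together with the retraction assembled from the $p_P$'s. The compatibility I need is exactly the one recalled in the paragraph preceding the theorem: two normal tubular neighbourhoods of the same submanifold in a Riemannian manifold coincide on the intersection of their domains, because each fibre is a geodesic segment orthogonal to the base, and such fibres are uniquely determined pointwise. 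This makes the retraction $p:N\to la(\ep)(V_{\CC\RR^n}(f_\ep))$ well-defined and smooth, and guarantees each fibre sits inside a $\mathrm{dist}_{(\ep)}$-ball of radius $\mu$.

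The main obstacle I anticipate is this gluing step, specifically verifying that the patched object $N$ is genuinely a tubular neighbourhood in the sense defined earlier, rather than merely a union of local ones. Concretely, one must check (i) that every fibre $p^{-1}(x)$ of the glued retraction is still diffeomorphic to a ball of the right dimension, not a larger or disconnected set produced by overlapping fibres from different $N_P$'s, and (ii) that the smooth approximating section $(T_P\times\mathrm{id}_{U^n})\circ la(\ep)(V_{\CC\RR^n}(f^P))$ remains a section of $p$ near $P$ after the gluing. Both points are handled by the normality/uniqueness property above: fibres of distinct $N_P$'s that meet must coincide as geodesic arcs, so the quoted coincidence on overlaps forces the global $N$ to retain the local fibre structure and the local sections to fit together consistently around each $P$.
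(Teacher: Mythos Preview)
Your proposal is correct and follows essentially the same route as the paper: invoke the finite cell decomposition and Lemma~\ref{ex-lemma-0.4} to reduce to finitely many truncations, use Corollary~\ref{ex-cor-3.1} to get a single $\delta$, apply the preceding lemma to convert this into an $\ep$-threshold $\zeta$, and then glue the local normal tubular neighbourhoods via the coincidence-on-overlaps property of normal tubes. The paper dispatches the gluing in a single sentence, whereas you spell out the fibrewise argument in more detail, but the underlying mechanism is identical.
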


\begin{rem}\label{addrem}
For two distinct points $\alpha,\beta$ of $V(f^P)$, the distance 
$$
\mathrm{dist}_{(\ep)}(la(\ep)(\alpha),la(\ep)(\beta))
$$
does not depend on $\ep$. (Recall the definition of $la(\ep)$).
This fact is the reason we defined the seemingly complicated distance 
$\mathrm{dist}_{(\ep)}$.
If this distance were not independent of $\ep$,
Theorem \ref{approximationtheorem} would not
give us any approximation of hypersurfaces.
\end{rem}

\subsection{Surjectivity theorem}

In this section, we introduce the \textit{surjectivity theorem} without proof. 
For details, the reader should consult the following references: 
Einsiedler, M. Kapranov, M. and Lind, D. \cite{Lind};
Payne, S. \cite{Payne}.

Let $f_\ep(x)$ be a pL-polynomial in $n$ valuables $x_1,\dots,x_n$ and
$\mathcal{R}^Pf_\ep=f^P+\Delta^P$ be its canonical expression at a point 
$P=(A_1,\dots,A_n)$
in $TV_{\RR^n}(f_\ep)$. Then we have the following theorem.

\begin{theorem}[Surjectivity theorem] \label{surjectivitytheorem}
Let $p=(p_1,\dots,p_n)\in V_{\CC\RR^n}(f^P)$.
Then there exist $n$ Puiseux series 
$\tilde{p}_1=p_1\ee^{A_1}+p_1'\ee^{A_1'}+p''_1\ee^{A_1''}+\cdots$,
$\cdots$, $\tilde{p}_n=p_n\ee^{A_n}+p_n'\ee^{A_n'}+p''_n\ee^{A_n''}+\cdots$
such that 
$(\tilde{p}_1,\dots,\tilde{p}_n)\in V_{\CC\RR^n}(f_\ep)$.
\end{theorem}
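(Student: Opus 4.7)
The plan is to recast the statement as a problem of lifting $\CC$-points to $K$-points of an algebraic variety, and then to solve this lifting problem by induction on $n$ via Newton--Puiseux/Hensel-type reasoning.

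First, I would reduce to the case $P=O$. Because $\mathcal{R}^Pf_\ep(x)=\ee^{-\Val(P;f_\ep)}f_\ep(\mathcal{Q}_P(x))$, the map $\mathcal{Q}_P$ identifies $V_{\CC\RR^n}(\mathcal{R}^Pf_\ep)$ with $V_{\CC\RR^n}(f_\ep)$. Hence it suffices to construct Puiseux series $q_i = p_i + q_i'\ee^{B_i} + q_i''\ee^{B_i'} + \cdots$ with $0<B_i<B_i'<\cdots$ such that $(q_1,\dots,q_n)\in V_{\CC\RR^n}(\mathcal{R}^Pf_\ep)$; then $\tilde{p}_i := \ee^{A_i}q_i$ automatically has the prescribed leading term $p_i\ee^{A_i}$ and lies on $V_{\CC\RR^n}(f_\ep)$.

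Second, I would view $\mathcal{R}^Pf_\ep = f^P + \Delta^P$ as a Laurent polynomial in $x_1,\dots,x_n$ whose coefficients lie in the valuation ring $R=\{a\in K:\val(a)\geq 0\}$, and whose reduction modulo the maximal ideal $\mathfrak{m}=\{a\in K:\val(a)>0\}$ is precisely $f^P$. The task then becomes: find a $K$-point of $V(\mathcal{R}^Pf_\ep)$ whose coordinates lie in $R$ and reduce to $p_1,\dots,p_n$ modulo $\mathfrak{m}$. I would prove this by induction on $n$. The base case $n=1$ follows from the algebraic closedness of $K$ (the Newton--Puiseux theorem) combined with a Newton polygon argument sorting the roots of the one-variable Laurent polynomial $\mathcal{R}^Pf_\ep(x_1)$ by valuation and residue; the roots with valuation $0$ and residue $p_1$ correspond precisely to the zeros of $f^P$ at $p_1$. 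For $n>1$, one specializes one coordinate---by choosing, for instance, a Puiseux series lift of $p_n$ via a generic projection---to reduce to an $(n-1)$-variable problem, to which the inductive hypothesis applies.

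The main obstacle is the control of residues through the induction. The classical surjectivity of tropicalization (Kapranov's theorem) only guarantees the existence of some $K$-point above $P$, whereas the statement here prescribes the full residue vector $(p_1,\dots,p_n)$. In the inductive step, the specialization of $x_n$ must be chosen so that, after reduction modulo $\mathfrak{m}$, the resulting $(n-1)$-variable polynomial still vanishes at $(p_1,\dots,p_{n-1})$; this amounts to a Hensel-type compatibility check between the residue structure of $\mathcal{R}^Pf_\ep$ and the combinatorics of the Newton polytope of $f^P$. It is precisely for this delicate combinatorial analysis that the paper defers to the detailed treatments in \cite{Lind,Payne}.
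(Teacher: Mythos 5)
The paper itself offers no proof of Theorem \ref{surjectivitytheorem}: it is explicitly stated ``without proof'' and quoted from \cite{Lind} and \cite{Payne}, so your proposal can only be measured against the standard arguments in that literature, not against an internal argument of the paper.

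Your setup is sound and consistent with the paper's formalism: reducing to $P=O$ via $\mathcal{Q}_P$ (harmless, since $\mathcal{R}^Pf_\ep$ and $f_\ep\circ\mathcal{Q}_P$ differ by the unit $\ee^{-\Val(P;f_\ep)}$), viewing $\mathcal{R}^Pf_\ep=f^P+\Delta^P$ as a Laurent polynomial over the valuation ring whose reduction modulo $\mathfrak{m}$ is $f^P$ (Proposition \ref{ex-prop-2.1}), and proving the base case $n=1$ by Newton--Puiseux together with the Newton polygon, which correctly matches valuation-zero roots and their residues with the roots of $f^P$. The genuine gap is the inductive step, and it sits exactly where the content of the theorem lies. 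If you specialize $x_n$ to a lift of $p_n$, the reduction of the specialized polynomial is $f^P(x_1,\dots,x_{n-1},p_n)$, and the induction goes through only when this is not identically zero (it then vanishes at $(p_1,\dots,p_{n-1})$ because $f^P(p)=0$). But when $x_n-p_n$ divides $f^P$, say $f^P=(x_n-p_n)^k\,h$ with $h(x',p_n)\not\equiv 0$, any lift $p_n+c\ee^{t}$ with $0<kt<\val$-gap of $\Delta^P$ produces a specialized polynomial whose initial form is a scalar multiple of $h(x',p_n)$, which need not vanish at $(p_1,\dots,p_{n-1})$; the residue control that the theorem asserts is precisely what is lost. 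The extreme case $f^P=x_n-p_n$ already shows that the choice of which coordinate (more generally, which monomial direction) to specialize cannot be made blindly.

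Repairing this requires an actual new ingredient, not a ``Hensel-type compatibility check'': for example, one first applies a generic monomial automorphism of the torus (which transforms valuations linearly and residues monomially, so the statement can be transported), using that an $(n-1)$-dimensional component of $V_{\CC\RR^n}(f^P)$ can be contained in at most one coset of a codimension-one subtorus, so a generic primitive direction avoids all finitely many components through $p$ and the degenerate case above never occurs. Since you explicitly defer this step to \cite{Lind,Payne}, your proposal is a correct outline of the standard strategy but does not itself constitute a proof of the surjectivity theorem; the deferred step is the theorem. (Two minor points you should also record: the exponents $A_i<A_i'<A_i''<\cdots$ must come out strictly increasing, which your residue formulation does give, and the series produced by Newton--Puiseux must be convergent, as the paper's $K$ consists of convergent Puiseux series.)
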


This theorem states information about pointwise convergence
of $V_{\CC\RR^n}(f_\ep)$. 
Briefly, the approximation theorem \ref{approximationtheorem} deals with
global information of $V_{\CC\RR^n}(f_\ep)$
and the surjectivity theorem \ref{surjectivitytheorem}
deals with local information.

\section{Plane Curves over $K$ and Tropical Curves}\label{sec3}

In the rest of this paper, 
we consider the two-dimensional case. In this case the varieties 
$V_{\CC\RR^2}(f_\ep)$ and $V_{\CC\RR^2}(f^P)$ are complex curves 
(or Riemannian surfaces)
contained in the 
algebraic torus $\CC\RR^2$. 
We assume $V_{\CC\RR^2}(f_\ep)$ is non-singular and totally non-singular unless
otherwise stated. Now we denote $V_{\CC\RR^2}(f_\ep)$, $V_{\CC\RR^2}(f^P)$,
$TV_{\RR^2}(f_\ep)$, \dots\textit{etc.} by $V(f_\ep)$, $V(f^P)$, $TV(f_\ep)$,
\dots\textit{etc.} for simplicity.

Recall that $K$ is the Puiseux series field
with the standard non-archimedean valuation $\val$.
Define the multiplicative group $R^\times:=\{x\in K\,\vert\,\val(x)=0\}$.
Any element $x$ of $R^\times$ tends to a finite non-zero complex number
when $\ep$ tends to zero. Denote the limit by $\htht\,{(x)}$.
Let 
\begin{equation}\label{eq3.1}
\textstyle f_\ep(x,y)=\sum_{i=0}^N{a_i(x)y^{N-i}}\,
=a_0(x)\,y^N\!\!+a_1(x)\,y^{N-1}+\!\cdots\!+a_N(x)
\end{equation}
be a polynomial over $K$.
As $K$ is algebraically closed, $a_i$  $(i=0,1,\dots,N)$ has the expression:
\begin{equation}\label{eq3.2}
\textstyle
a_i(x)=c_i\ee^{A_i}x^{m_i}\prod_{j=1}^{d_i}{(x-u_{i,j}\ee^{B_{i,j}})},
\quad
c_i,u_{i,j}\in R^\times\!,\ A_i,B_{i,j}\in\QQ,\,m_i\in\NN.
\end{equation}
Define the algebraic curve $C_\ep:=V(f_\ep)$.
By changing variables $x\mapsto x\ee^{-R}$ and
$y\mapsto y\ee^{-R'}$ $(R,R'\!>\!\!>\!0)$,
we can assume $A_i,B_{i,j}>0$
without loss of generality.
In the present paper, we investigate algebraic curves over $K$
which satisfy
the following genericness condition:\\

\textbf{Genericness condition.}\ \ 
The numbers
$\htht (u_{i,j})$ $\forall{i,j}$ are all distinct.\\

For the proof of our main theorem \ref{thm},
we will impose a slightly stricter conditions on the curve.
We discuss these conditions in the appendix.

\subsection{Examples}\label{sec3.1}
We first give some examples of plane curves over $K$
and their tropicalization.
These examples show us how to approximately reconstruct the plane curve
from its tropicalization.
\subsubsection{Example (I)}
Let $C_\ep$ be the curve defined by the pL-polynomial
\begin{equation}\label{eq3.3b}
f_\ep(x,y)=(x+\ee)y^2+(x+\ee^2)(x+\ee^3)y+\ee^8=0.
\end{equation}
The tropicalization of $C_\ep$
is 
\begin{equation}
\begin{array}{ll}
TV(f_\ep)\!\!&:=
\left
\{(X,Y)\in\RR^2
\left\vert
\begin{array}{l}
\min{[X+2Y,2Y+1,2X+Y,X+Y+2,Y+5,8]}\\
\mbox{is not smooth.}
\end{array}
\!\right\}\right..
\end{array}
\end{equation}
Let us denote this variety by $\mathrm{Trop}\,C$ simply.
Figure \ref{fig2}
shows $\mathrm{Trop}\,C$.
$\mathrm{Trop}\,C$ has four vertices $\alpha=(1,1), \beta=(2,4), \gamma=(2,3)$ and 
$\delta=(2.5,3.5)$, and has one closed loop.
We use the term ``\textit{edge}" only when it means a segment
of finite length. 
Edges of infinite length shall be called \textit{leaves}
hereafter.
The \textit{genus of} $\mathrm{Trop}\,C$ is the number of independent
closed cycles over $\mathrm{Trop}\,C$.
In this case, $\mathrm{genus}(\mathrm{Trop}\,C)=1$.
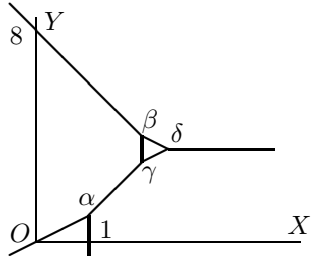
\begin{figure}[htbp]
\begin{center}
\begin{picture}(100,90)
\thinlines
\put(0,0){\line(1,0){100}}
\put(0,0){\line(0,1){85}}
\thicklines
\put(0,0){\line(2,1){20}}
\put(0,0){\line(-2,-1){10}}
\put(20,10){\line(1,1){20}}
\put(40,30){\line(2,1){10}}
\put(50,35){\line(1,0){40}}
\put(20,10){\line(0,-1){15}}
\put(0,80){\line(1,-1){40}}
\put(0,80){\line(-1,1){10}}
\put(40,30){\line(0,1){10}}
\put(40,40){\line(2,-1){10}}
\put(-10,0){$O$}
\put(3,80){$Y$}
\put(95,3){$X$}
\put(-10,75){$8$}
\put(24,1.5){$1$}
\put(16,14){$\alpha$}
\put(40,43){$\beta$}
\put(40,24){$\gamma$}
\put(51,38){$\delta$}
\end{picture}
\end{center}
\caption{The variety $\mathrm{Trop}\,C$.
It consists of four vertices, four edges and four leaves.
The genus of $\mathrm{Trop}\,C$ is one.}
\label{fig2}
\end{figure}

The canonical expression of $f_\ep$ at $\alpha$ is 
$\mathcal{R}^\alpha f_\ep=(xy^2+y^2+x^2y)+(\ee xy+\ee^2 y+\ee^3 y+\ee^5)$.
By the approximation theorem \ref{approximationtheorem}, 
the curve $la(\ep)(C_\ep)$ 
is approximated by the translation of 
$la(\ep)(V(f^\alpha))=la(\ep)(V(xy^2+y^2+x^2y))=la(\ep)(V_{\CC\RR^2}(xy+y+x^2))$ 
around $\alpha$.
Similarly, around the points $\beta,\gamma,\delta\in\mathrm{Trop}\,C$,
$la(\ep)(C_\ep)$ is approximated by
$la(\ep)(V(x^2y+xy+1))$, $la(\ep)(V(y+x^2+x))$ and $la(\ep)(V(y^2+xy+1))$ respectively.

Figure \ref{fig1}
shows four `local' Riemannian surfaces $V(f^\alpha),V(f^\beta),V(f^\gamma)$ and
$V(f^\delta)$.
\begin{figure}[htbp]
\begin{center}
\includegraphics[bb=39 624 526 797,clip, width=9cm]
{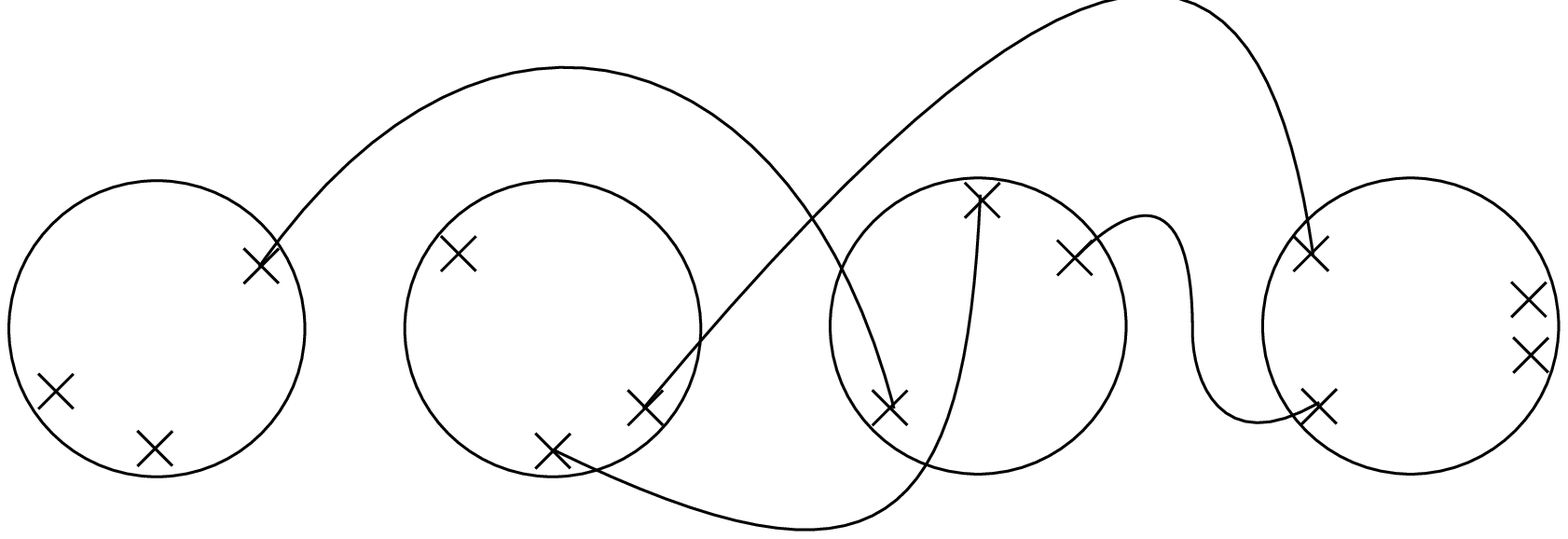}
\begin{picture}(300,-10)(0,-10)
\put(50,63){$(0,0)$}
\put(-10,22){$(\infty,\infty)$}
\put(30,5){$(-1,\infty)$}
\put(108,34){$(0,\infty)$}
\put(80,60){$(\infty,0)$}
\put(100,0){$(-1,\infty)$}
\put(190,60){$(0,0)$}
\put(135,15){$(\infty,\infty)$}
\put(170,70){$(-1,0)$}
\put(275,42){$(0,1)$}
\put(275,30){$(0,-1)$}
\put(215,9){$(\infty,\infty)$}
\put(235,65){$(\infty,0)$}
\put(277,0){${}^\infty\!\!\!\leftarrow x\rightarrow^0$}
\put(312,35){$y$}
\put(312,25){$\downarrow_\infty$}
\put(312,45){$\uparrow^0$}
\put(50,-10){$\alpha$}
\put(110,-10){$\beta$}
\put(180,-10){$\gamma$}
\put(255,-10){$\delta$}
\end{picture}
\end{center}
\caption{`Local' Riemannian surfaces.
All of genus $0$.
The points $(x,y)$ satisfying $x=\infty,0$ or $y=\infty,0$ are described
in the figure.
}
\label{fig1}
\end{figure}

According to the approximation theorem,
we can approximately draw the form of the 
Riemannian surface $C_\ep$ for small $\ep>0$.
Gluing the local data in Figure \ref{fig1} along $\mathrm{Trop}\,C$ 
(Figure \ref{fig2}),
we can sketch $la(\ep)(C_\ep)$ (or $C_\ep$ which is diffeomorphic to $la(\ep)(C_\ep)$) 
as in Figure \ref{fig3a}:
Four small spheres, four long cylinders and five horns
make up the figure.
For later arguments, we take the completion of $C_\ep$ that is a compact Riemannian
surface. 
\begin{figure}[htbp]
\begin{center}
\begin{picture}(0,0)
\put(80,60){$\alpha$}
\put(155,175){$\beta$}
\put(160,100){$\gamma$}
\put(225,140){$\delta$}
\end{picture}
\includegraphics[bb=0 383 565 810, clip,
width=10cm
]{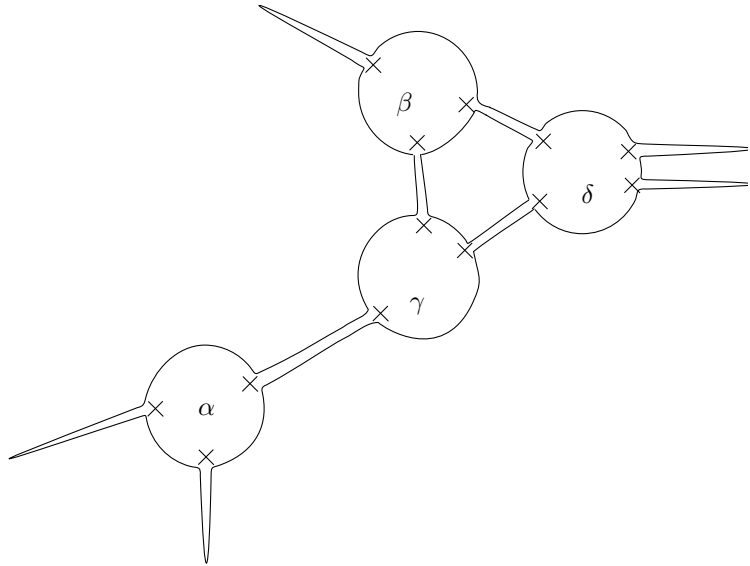}
\end{center}
\caption{A sketch of (the completion of) $C_\ep$
consisting of
four spheres, four cylinders and five horns.
}
\label{fig3a}
\end{figure}

\subsubsection{Example (II)}
Let us consider
\begin{equation}\label{eq3.13b}
C_\ep:y^3+(x+\ee^4)y^2+\ee^2(x+\ee)(x+2\ee)y+\ee^{10}=0
\end{equation}
and its tropicalization:
\[
\mathrm{Trop}\,C:
\min{
\left[
\begin{array}{l}
3Y,X+2Y,2Y+4,2X+Y+2,\\
X+Y+3,Y+4,10
\end{array}
\right]}
\mbox{ is not smooth.}
\]
$\mathrm{Trop}\,C$ has three vertices $\alpha=(1,6)$,
$\beta=(1,3)$ and $\gamma=(2,2)$.
The truncations associated with these points are:
$f^\alpha=(x+1)(x+2)y+1$, $f^\beta=xy^2+(x+1)(x+2)$ and $f^\gamma=y^3+xy^2+2y$ 
respectively.
Figure \ref{fig3} displays an approximate sketch of (the completion of) $C_\ep$.
Although $C_\ep$ is of genus one,
the genus of $\mathrm{Trop}\,C$ is zero. This difference comes from
the edge which connects $\alpha$ and $\beta$ in $\mathrm{Trop}\,C$.
Two long cylinders are associated with this one edge.

\begin{figure}[htbp]
\begin{center}
\begin{picture}(100,150)(0,-60)
\put(0,0){\line(1,0){100}}
\put(0,0){\line(0,1){90}}
\thicklines
\put(0,0){\line(2,1){40}}
\put(0,0){\line(-2,-1){10}}
\put(40,20){\line(1,0){60}}
\put(0,20){\line(2,1){20}}
\put(0,20){\line(-2,-1){10}}
\put(20,30){\line(2,-1){20}}
\put(20,30){\line(0,1){30}}
\put(20,60){\line(-1,1){30}}
\put(20,60){\line(1,0){80}}
\thinlines
\multiput(0,30)(4,0){5}{\line(1,0){2}}
\multiput(0,60)(4,0){5}{\line(1,0){2}}
\multiput(20,0)(0,4){10}{\line(0,1){2}}
\put(-10,0){$O$}
\put(95,3){$X$}
\put(3,85){$Y$}
\put(23,2){$1$}
\put(-10,75){$8$}
\put(-10,20){$2$}
\put(-10,29){$3$}
\put(-10,57){$6$}
\put(20,65){$\alpha$}
\put(25,30){$\beta$}
\put(38,12){$\gamma$}
\put(163,97){$\alpha$}
\put(163,14){$\beta$}
\put(240,-20){$\gamma$}
\end{picture}
\includegraphics[bb=71 432 438 786, clip,
width=7cm
]{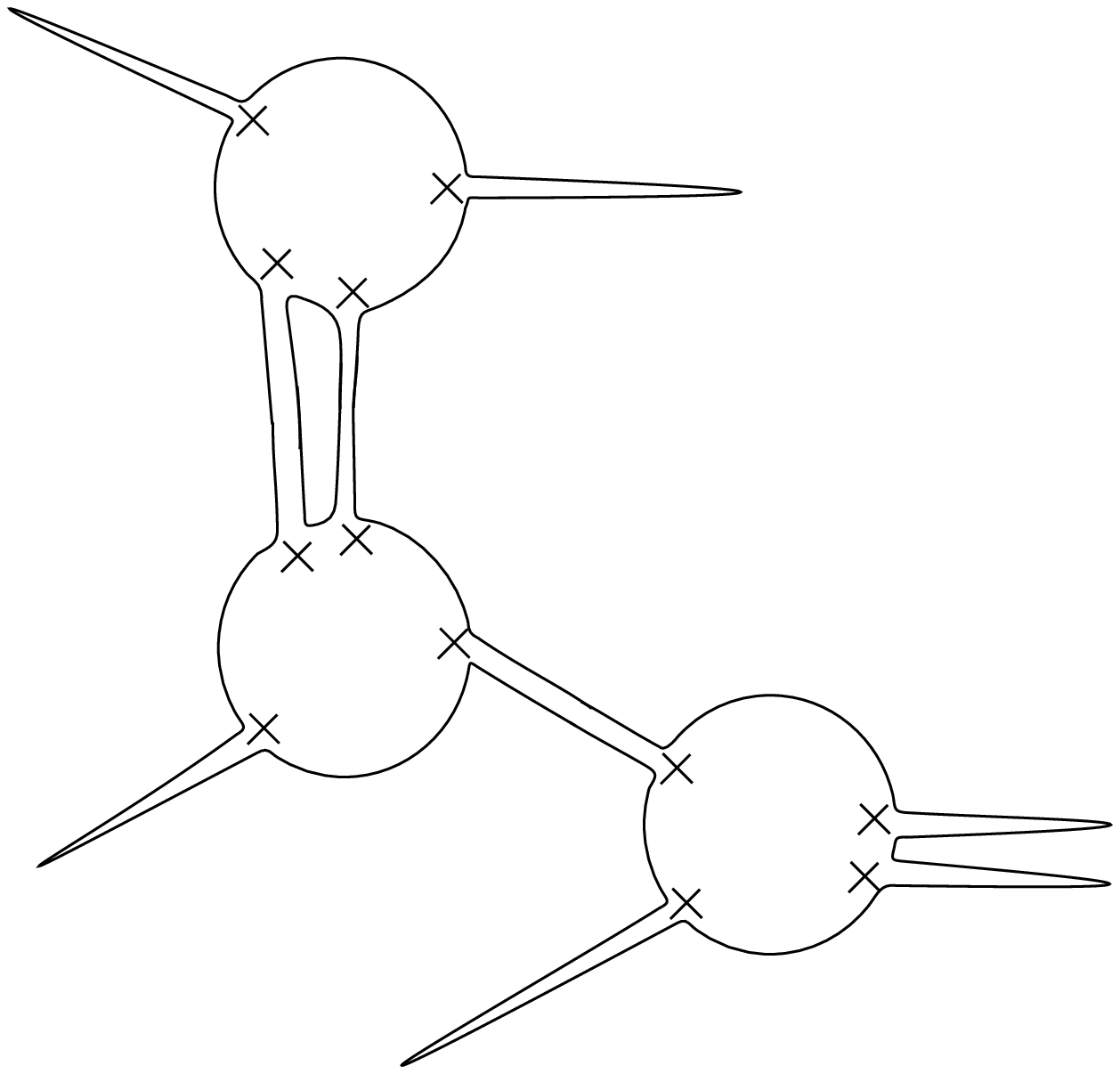}
\end{center}
\begin{center}
\includegraphics[bb=90 619 506 773, clip,
width=9cm
]{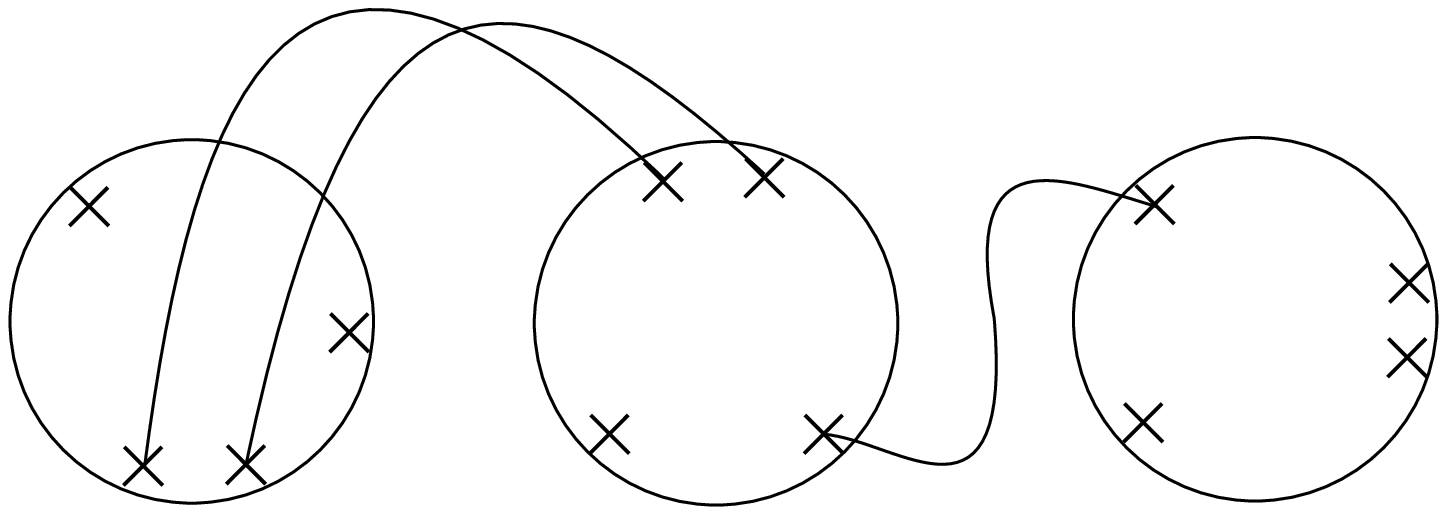}
\begin{picture}(0,0)(200,0)
\put(4,22){$(0,-\frac{1}{2})$}
\put(-61,70){$(\infty,0)$}
\put(-65,-5){$(-1,\infty)$}
\put(-20,-5){$(-2,\infty)$}
\put(38,45){$(-1,0)$}
\put(73,70){$(-2,0)$}
\put(27,-3){$(\infty,\infty)$}
\put(77,-3){$(0,\infty)$}
\put(125,70){$(\infty,0)$}
\put(125,-3){$(\infty,\infty)$}
\put(195,40){$(0,\ai\sqrt{2})$}
\put(195,27){$(0,-\ai\sqrt{2})$}
\put(210,0){${}^\infty\!\!\!\leftarrow x\rightarrow^0$}
\put(245,35){$y$}
\put(245,25){$\downarrow_\infty$}
\put(245,45){$\uparrow^0$}
\put(-50,27){$\alpha$}
\put(63,20){$\beta$}
\put(160,25){$\gamma$}
\end{picture}
\end{center}
\caption{The sketch of $C_\ep$.}
\label{fig3}
\end{figure}

\bigskip
In both examples above, the shapes of Riemannian surfaces
$V(f^P)$ ($P$ is a vertex of $\mathrm{Trop}\,C$)
are essential for drawing $C_\ep$.
We denote these Riemannian surfaces by 
\textit{Riemannian sub-surfaces}.

In example (II), there exists an edge associated to two cylinders.
This property reflects the fact that:
For $P=(1,Y)\in\mathrm{Trop}\,C\quad (3<Y<6)$,
the variety $V(f^P)=V((x+1)(x+2))=V(x+1)\coprod V(x+2)$ consists of 
two irreducible components. 
In such a case,
we call the edge $\overline{\alpha\beta}$ \textit{of multiplicity} $2$.

Here we remark on the behaviour of sub-surfaces.
It may happen that a sub-surface is
reducible or of genus more than $0$.
In these cases, the genus of $\mathrm{Trop}\,C$
becomes inferior to the genus of $C_\ep$.

The following definition is given for Section \ref{sec4}.

\begin{defi}
A curve over $K$ is called
non-degenerate if all of its 
Riemannian sub-surfaces
are irreducible and of genus $0$.
\end{defi}

\subsection{Multiplicity of tropical edges}

In this section, we define the 
\textit{vertical thickness}, the \textit{horizontal thickness} 
and the \textit{multiplicity}
of tropical edges.

\subsubsection{Vertical and horizontal thickness}
The `Val' function associated with the pL-polynomial $a_i(x)$ (\ref{eq3.2})
is expressed as
\begin{equation}\label{eq3.19}
\textstyle
\Val(X;a_i)=A_i+m_iX+\sum_{j=1}^{d_i}\min{[X,B_{i,j}]},\quad
(i=0,1,\dots,N).
\end{equation}
Let $F_i(X):=\Val(X;f_\ep)$.
Then the defining condition of $\mathrm{Trop}\,C$ is expressed as
\begin{equation}\label{eq3.20}
\mathrm{Trop}\,C:\ \ 
\mathrm{min}_{i=0,\dots,N}{[F_i(X)+(N-i)Y]}\quad \mbox{ is not smooth}.
\end{equation}
Of course, we have $\Val(X,Y;f_\ep)=\mathrm{min}_{i=0,\dots,N}{[F_i(X)+(N-i)Y]}$.
Define the domain $\mathfrak{D}_i\subset \RR^2$ $(i=0,1,\dots,N)$ by
\[
\mathfrak{D}_i:=\left\{(X,Y)\,\vert\,\Val(X,Y;f_\ep)
=F_i(X)+(N-i)Y\right\}.
\]
The domains $\mathfrak{D}_i$ separate the plane $\RR^2$ into at most $N+1$
pieces:
$\RR^2=\bigcup_{i=0}^N{\mathfrak{D}_i}$.
If $i<j$, $(x,y_i)\in\mathfrak{D}_i$ and $(x,y_j)\in\mathfrak{D}_j$ then
$y_i\leq y_j$.
($\because$ From the
definition of $\mathfrak{D}_i$ and $\mathfrak{D}_j$,
we have $F_i(x)+(N-i)y_i\leq F_j(x)+(N-j)y_i$ and
$F_j(x)+(N-j)y_j\leq F_i(x)+(N-i)y_j$.
It follows that $(j-i)y_i\leq (j-i)y_j$.)
Let us define the piecewise linear function $\mathcal{N}_i(X)$ $(i=1,2,\dots,N)$
defined by the relation
$
\mathcal{N}_i(X)=\min_{j\geq i}\{Y\,\vert\,(X,Y)\in\mathfrak{D}_{i} \}
$.
Note that $\mathcal{N}_{i+1}(X)\geq\mathcal{N}_i(X)$ for all $X$.

Using the function $\mathcal{N}_i(X)$, we obtain an another expression of 
$\mathrm{Trop}\,C$. We formally regard $\mathcal{N}_{N+1}(X):=+\infty$ 
and $\mathcal{N}_{0}(X):=-\infty$
for any $X$.
\begin{prop}\label{prop3.3}
Let $L_{i,j}$ be the vertical edge which connects 
\[(B_{i,j},\mathcal{N}_i(B_{i,j}))
\qquad\mbox{ and }\qquad (B_{i,j},\mathcal{N}_{i+1}(B_{i,j})).\]
Then, the set $\left(\bigcup_{i=1}^N{\{Y=\mathcal{N}_i(X)\}}\right)\cup
\left(\bigcup_{i,j}{L_{i,j}}\right)$
coincides with $\mathrm{Trop}\,C$.
\end{prop}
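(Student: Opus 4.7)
The plan is to exploit the fact that $\Val(X,Y;f_\ep)$ equals the pointwise minimum $G(X,Y):=\min_{0\le i\le N}\bigl[F_i(X)+(N-i)Y\bigr]$ of $N+1$ piecewise-linear affine functions, so that $\mathrm{Trop}\,C$ is exactly the non-smooth locus of $G$. I would split this locus into two pieces according to how the non-smoothness arises: (a) points where at least two distinct indices $i$ realise the minimum, and (b) points where a unique index $i$ realises the minimum but the PL function $F_i(X)$ itself has an internal corner at that $X$. The target is to identify (a) with $\bigcup_{i=1}^N\{Y=\mathcal{N}_i(X)\}$ and (b) with $\bigcup_{i,j}L_{i,j}$.

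For the inclusion $\supset$, on the graph $\{Y=\mathcal{N}_k(X)\}$ the vertical stacking of the domains $\mathfrak{D}_j$ (justified in the paragraph preceding the proposition) forces at least two minimisers $i<k\le i'$ at every point; since the $Y$-coefficients $N-i$ and $N-i'$ differ, $G$ cannot be smooth there. On $L_{i,j}$ the point lies strictly between $\mathcal{N}_i(B_{i,j})$ and $\mathcal{N}_{i+1}(B_{i,j})$ in $Y$, so on a neighbourhood only the index $i$ minimises, and then $G(X,Y)=F_i(X)+(N-i)Y$ is non-smooth at $X=B_{i,j}$ by the explicit form \eqref{eq3.19} of $F_i$. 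For the reverse inclusion $\subset$, let $(X_0,Y_0)\in\mathrm{Trop}\,C$ and denote by $I$ the set of minimising indices at this point. If $|I|\ge 2$, take $k:=\min\{i\in I : i>\min I\}$; the stacking then identifies $Y_0$ with the lower boundary of $\bigcup_{j\ge k}\mathfrak{D}_j$ at $X=X_0$, which is $\mathcal{N}_k(X_0)$. If $I=\{i\}$, then $G=F_i(X)+(N-i)Y$ on some neighbourhood of $(X_0,Y_0)$, so the non-smoothness must come from $F_i$; by \eqref{eq3.19} this forces $X_0=B_{i,j}$ for some $j$, and uniqueness of the minimiser gives $\mathcal{N}_i(X_0)<Y_0<\mathcal{N}_{i+1}(X_0)$, placing $(X_0,Y_0)$ on $L_{i,j}$.

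The main obstacle I anticipate is the bookkeeping at degenerate and corner points: where a horizontal piece meets a vertical edge, where some domain $\mathfrak{D}_j$ is empty or pinched to a single $Y$-value at $X=B_{i,j}$ so that several $\mathcal{N}_i(X_0)$ coincide, and making precise the definition of $\mathcal{N}_i$ (the printed formula $\min_{j\ge i}\{Y:(X,Y)\in\mathfrak{D}_i\}$ should be read as $\min\{Y:(X,Y)\in\bigcup_{j\ge i}\mathfrak{D}_j\}$), together with the conventions $\mathcal{N}_0\equiv -\infty$ and $\mathcal{N}_{N+1}\equiv +\infty$ supplied just before the statement, so that the asserted union covers $\mathrm{Trop}\,C$ without missing or spurious pieces.
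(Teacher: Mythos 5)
Your proof is correct and is essentially the paper's argument: both rest on the vertical stacking of the domains $\mathfrak{D}_i$, the fact that away from the graphs $\{Y=\mathcal{N}_i(X)\}$ a single index dominates so that $\Val(X,Y;f_\ep)=F_i(X)+(N-i)Y$ locally, and the observation from (\ref{eq3.19}) that $F_i$ is non-smooth exactly at $X=B_{i,j}$, which produces the vertical segments $L_{i,j}$. The only difference is organisational — you case-split on the number of minimising indices where the paper runs over connected components of $\RR^2\setminus\bigcup_i G_i$, and you additionally verify the inclusion $\bigcup_i G_i\subset\mathrm{Trop}\,C$ that the paper dismisses as obvious — so the substance is the same.
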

\begin{rem}
If $\mathcal{N}_{i}(B_{i,j})=\mathcal{N}_{i+1}(B_{i,j})$,
then
$L_{i,j}=\{\mbox{a point}\}$.
\end{rem}
\begin{proof}
Let $G_i:=\{(X,Y)\in\RR^2\,\vert\,Y=\mathcal{N}_i(X)\}$.
Because it is obvious, by definition,
that 
$\textstyle
\mathrm{Trop}\,C\supset
\left(\bigcup_{i=1}^N{G_i}\right)
$, it is sufficient to prove
$\textstyle
\mathrm{Trop}\,C\setminus
\left(\bigcup_{i=1}^N{G_i}\right)=\left(\bigcup_{i,j}{L_{i,j}}\right)$.
Choose a connected component $O$ of 
$\RR^2\setminus\left(\bigcup_{i=1}^N{G_i}\right)$.
The domain $O$ is contained in $\mathfrak{D}^\circ_i$ for some $i$,
where $\mathfrak{D}^\circ_i$ is the set of inner points of $\mathfrak{D}_i$.
For any point $(X,Y)$ in $O$, it follows that
$\Val(X,Y;f_\ep)=F_i(X)+(N-i)Y$.
Then, a point $(X,Y)\in\mathrm{Trop}\,C\cap O$ must be a point at which
$F_i(X)$ is not smooth.
Because the function $F_i(X)$ (\ref{eq3.19}) is not smooth 
iff
$X=B_{i,j}$, 
we conclude that $\mathrm{Trop}\,C\setminus
\left(\bigcup_{i=1}^N{G_i}\right)$
consists of
the sets $\{X=B_{i,j}\}\cap O=L_{i,j}$.
\end{proof}
\bigskip

By use of Proposition \ref{prop3.3},
we introduce 
the vertical thickness
of edges.
\begin{defi}\label{def3.3.1}
Let $E\subset\mathrm{Trop}\,C$ be an edge.
We call the number
$
\sharp\left\{i\,\vert\,E\subset G_i\right\}
$ \textit{vertical thickness} of $E$.
In other words, the vertical thickness of an edge $E$ is
a difference of the maximum element and the minimum element
of the set
$$
\left\{w_2\in\{0,\dots,N\}\,\vert\,\Val(X,Y;f_\ep)=\val(a_w)+w_1X+w_2Y,\ \ \ 
\forall (X,Y)\in E
\right\}.
$$
\end{defi}
For example, the vertical thickness of a vertical edge is $0$.

\begin{lemma}
The vertical thickness of $E$ equals to the degree of the projection
\[
V(f^P)\to\CC\setminus\{0\};\ (x,y)\mapsto x,\qquad P\in \mathrm{Int}\,E.
\]
\end{lemma}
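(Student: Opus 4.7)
The plan is to make $f^P$ explicit for $P\in\mathrm{Int}\,E$, read the vertical thickness directly off its support as a polynomial in $y$, and then count the preimages of a generic point under the projection $\pi$. First I would observe that, by Proposition \ref{ex-prop-2.1} together with (essentially) Lemma \ref{ex-lemma-0.4}, $\Theta(P)$ is constant on $\mathrm{Int}\,E$ and
\[
f^P(x,y) \;=\; \sum_{(w_1,w_2)\in\Theta(P)} c_{w_1,w_2}\, x^{w_1} y^{w_2},
\qquad c_{w_1,w_2}\in\CC\setminus\{0\}.
\]
Consequently the set $S:=\{w_2\mid \exists\,w_1\ \text{with}\ (w_1,w_2)\in\Theta(P)\}$ coincides with the set appearing in Definition \ref{def3.3.1}, so if I set $m:=\min S$ and $M:=\max S$ then the vertical thickness of $E$ equals $M-m$.

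Next, I would regroup $f^P$ by powers of $y$:
\[
f^P(x,y) \;=\; \sum_{k=m}^{M} b_k(x)\, y^k,
\qquad b_k(x):=\sum_{(w_1,k)\in\Theta(P)} c_{w_1,k}\, x^{w_1},
\]
with $b_m$ and $b_M$ both nonzero as Laurent polynomials in $x$ by the choice of $m$ and $M$. To compute $\deg\pi$, I would pick a generic $x_0\in\CC\setminus\{0\}$ avoiding the finite zero loci of $b_m$, of $b_M$, and of the $y$-discriminant of $f^P$. Multiplying the equation $f^P(x_0,y)=0$ by $y^{-m}$, which is harmless on the torus $\CC\setminus\{0\}$, yields a polynomial in $y$ of degree $M-m$ whose leading and constant coefficients $b_M(x_0)$ and $b_m(x_0)$ are both nonzero and whose roots are pairwise distinct. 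Hence $\pi^{-1}(x_0)$ consists of exactly $M-m$ points in $\CC\setminus\{0\}$, so $\deg\pi=M-m$.

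The main (but mild) point to verify is that the $y$-discriminant of $f^P$ is not identically zero in $x$, so that generic distinctness of fiber points is available. This is where total non-singularity is used: if the discriminant vanished identically then $f^P$ would have a repeated irreducible factor in $\CC[x,x^{-1}][y]$, and some point of $V(f^P)$ would satisfy both $f^P=0$ and $\nabla f^P=0$, contradicting smoothness of $V(f^P)$. The degenerate case $m=M$ (vertical thickness zero, corresponding to a vertical edge) falls out automatically: then $f^P=b_m(x)y^m$, the image of $\pi$ is the finite set of zeros of $b_m$, and $\deg\pi=0=M-m$ as required.
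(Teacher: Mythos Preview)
Your proof is correct and follows essentially the same approach as the paper's: write $f^P$ explicitly as a sum over $\Theta(P)$ with nonzero coefficients, and identify the degree of the projection with the difference between the maximum and minimum $y$-exponents appearing. The paper's version is much terser---it simply asserts this last equality without the generic-fiber and discriminant discussion---so your added care (in particular invoking total non-singularity to rule out a repeated factor) makes the argument more complete, though one could bypass it by taking the degree of $\pi$ in the algebraic sense as $\deg_y(f^P/y^m)$.
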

\begin{proof}
First note that the truncation
$f^P$ $(P\in\mathrm{Int}\,E)$ is determined uniquely
by Lemma \ref{ex-lemma-0.4}.
The Laurent polynomial $f^P$ is of the form $f^P=\sum c_wx^{w_1}y^{w_2}$,
where $w=(w_1,w_2)$ runs over the set $\{w\,\vert\,\Val(X,Y;f_\ep)=\val(a_w)
+{w_1}X+{w_2}Y,\ 
\forall (X,Y)\in E\}$ and $c_w$ is a non-zero complex number for any $w$.
Because the degree of the projection $x:V(f^P)\to \CC\setminus\{0\}$ equals to
the difference between the maximum degree
and the minimum degree w.r.t.\,$y$ consisted in $f^P$,
the desired result is obtained.
\end{proof}
\bigskip

Let $C_\ep^T:=V(f_\ep(y,x))$
be the curve
obtained from $C_\ep$ by switching the $x$ and $y$ coordinates.
For an edge $E\subset \mathrm{Trop}\,C$, we define
the \textit{horizontal thickness} 
of $E$
by the vertical thickness of 
$
E^T\subset \mathrm{Trop}\,C^T$ which is the image of $E$
by the morphism $(X,Y)\mapsto (Y,X)$.
For example, the horizontal thickness of horizontal edges is $0$.

\subsubsection{Multiplicity}
As in example (I\!I) in
Section \ref{sec3.1} above, it may happen that
more than one cylinders (or horns) are
associated with one edge (or one leaf).
\begin{defi}
Let $E\subset\mathrm{Trop}\,C$ be an edge $($resp.\,\,a leaf $)$.
The multiplicity of $E$ is the number of cylinders $($resp.\,\,horns $)$
associated with $E$.
\end{defi}

Let $E\subset\mathrm{Trop}\,C$ be an edge (resp.\,\,a leaf) of multiplicity $m$,
and $P$ be a point in $\mathrm{Int}\,E$.
By the approximation theorem \ref{approximationtheorem},
the variety $V(f^P)$ must be decomposed into $m$ irreducible components:
$V(f^P)=V(f^P_1)\coprod\cdots\coprod V(f^P_m)$, where
$f^P=f^P_1\cdots f^P_m$.
We often regard the edge $E$
as the union of distinguished
$m$ edges:
$E=E_1\amalg\dots\amalg E_m$ (See Figure \ref{fig7}
in Section \ref{sec4}),
where each
$E_i$ corresponds to $V(f^P_i)$ ($P\in\mathrm{Int}\,E$) and is taken to be
of multiplicity one.

Define the vertical thickness of $E_i$ as the degree of the projection:
$
V(f^P_i)\,\to\, \CC\setminus\{0\}\ ;\
(x,y)\mapsto x$.
Denote the vertical thickness of $E_i$ by $q_i$.
Naturally, $q_1+\dots+q_m$ equals to the vertical thickness of $E$.
Similarly, we can
define the horizontal thickness of $E_i$ as the vertical thickness of $E_i^T$.

The following definition is given for the next section.
\begin{defi}
Let $L_{i,j}$ be the vertical edge which connects 
$(B_{i,j},\mathcal{N}_i(B_{i,j}))$ and $(B_{i,j},\mathcal{N}_{i+1}(B_{i,j}))$.
The \textit{ceiling of} $L_{i,j}$ 
is the set $G_{i+1}=\{(X,Y)\,\vert\,Y=\mathcal{N}_{i+1}(X)\}$
and the \textit{floor of} $L_{i,j}$ is the set 
$G_i=\{(X,Y)\,\vert\,Y=\mathcal{N}_{i}(X)\}$.
\end{defi}

\subsection{Regularity of tropical curves}

Let $P=(X_0,Y_0)$ be a point in $\mathrm{Trop}\,C=TV(f_\ep)$, where
$f_\ep\!=\!\sum_{w=(w_1\!,\!w_2)\in\ZZ^2}{a_wx^{w_1}y^{w_2}}$.
Considering the set 
$\Theta(P)=\{w\in\ZZ^2\,\vert\,\Val(X_0,Y_0;f_\ep)=\val(a_w)+w_1X_0+w_2Y_0\}$
(Section \ref{sec2.2}),
we have $\sharp\Theta(P)\geq 2$ (Remark \ref{rem-ex}).
More precisely, the number of elements of $\Theta(P)$ satisfies the following 
inequalities: 
\begin{equation}\label{regularity}
\begin{array}{cl}
\text{i) }&
P \text{ is an inner point of some edge of } \mathrm{Trop}\,C\ \Rightarrow\
\sharp\Theta(P)\geq 2,\\ 
\text{ii) }& P \text{ is a vertex of } \mathrm{Trop}\,C\ \Rightarrow\
\sharp\Theta(P)\geq 3. 
\end{array}
\end{equation}
These inequalities reflect the fact that
the intersection of (generic)
two planes is an line and the intersection of (generic) three planes is a point
in $\RR^3$.

In the present paper, we often assume some genericness condition on the defining
polynomial of $\mathrm{Trop}\,C$.
\begin{defi}
The tropical plane curve $TV(f_\ep)$ is regular if 
the equalities in $(\ref{regularity})$ hold.
\end{defi}

\section{Integration Theory}\label{sec4}
The integration theory over tropical curves was first introduced in \cite{Mik}.
Hereafter we will show
that the ultradiscrete limit of holomorphic integrals over
$C_\ep$ coincides with the holomorphic integral over $\mathrm{Trop}\,C$
(for $C_\ep$ of some type).

\subsection{Definition of the holomorphic integral over tropical curves}

In this section, we give a brief introduction to integration theory
over tropical curves, following \cite{Takenawa,Mik}.
\bigskip

We first equip $\mathrm{Trop}\,C$ with the structure of a \textit{metric graph}.
Let $E$ be an edge of $\mathrm{Trop}\,C$.
$E$ has the expression $E=\{(X_0,Y_0)+t(u,v)\,\vert\,0\leq t\leq \ell\}$,
$(u,v\!\in \ZZ)$.
It can be assumed that $u$ and $v$ are coprime without loss of generality.
We call the vector $(u,v)$ the \textit{primitive vector} of $E$.
We define a tropical length $\ell_T$ of $E$ by 
$\ell_T(E):=\ell$.
With this length the tropical curve $\mathrm{Trop}\,C$ becomes a metric graph.

The metric on $\mathrm{Trop}\,C$ defines a symmetric bilinear form $\ell_T(\cdot,\cdot)$
on the space of paths in $\mathrm{Trop}\,C$.
For this,
we define $\ell_T(\Gamma,\Gamma):=\ell_T(\Gamma)$ for non-self-intersecting
path $\Gamma$,
and extend it to any pairs of paths bilinearly.
Figure \ref{fig4} shows an example of $\ell_T(\cdot,\cdot)$.
Note that the number
$\vert\ell_T(\Gamma_1,\Gamma_2)\vert$ equals the tropical length 
$\ell_T(\Gamma_1\cap\Gamma_2)$. 
This bilinear form gives 
the tropical length of intersection of two paths up to sign.
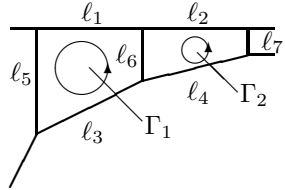
\begin{figure}[htbp]
\begin{center}
\begin{picture}(100,70)
\thicklines
\put(0,60){\line(1,0){100}}
\put(10,20){\line(0,1){40}}
\put(10,20){\line(-1,-2){10}}
\put(10,20){\line(2,1){40}}
\put(50,40){\line(4,1){40}}
\put(90,50){\line(1,0){10}}
\put(90,50){\line(0,1){10}}
\put(50,40){\line(0,1){20}}
\put(27,63){$\ell_1$}
\put(67,63){$\ell_2$}
\put(27,18){$\ell_3$}
\put(67,33){$\ell_4$}
\put(0,40){$\ell_5$}
\put(40,47){$\ell_6$}
\put(95,52){$\ell_7$}
\thinlines
\put(27,45){\circle{20}}
\put(37,47){\vector(0,1){0}}
\put(70,52){\circle{10}}
\put(75,55){\vector(0,1){0}}
\put(30,45){\line(1,-1){20}}
\put(71,51){\line(1,-1){15}}
\put(51,20){$\Gamma_1$}
\put(87,32){$\Gamma_2$}
\end{picture}
\end{center}
\caption{Example of a metric graph.
We have $\ell_T(\Gamma_1,\Gamma_1)=\ell_1+\ell_3+\ell_5+\ell_6$,
$\ell_T(\Gamma_2,\Gamma_2)=\ell_2+\ell_4+\ell_6+\ell_7$ and 
$\ell_T(\Gamma_1,\Gamma_2)=\ell_T(\Gamma_2,\Gamma_1)=-\ell_6$.
}
\label{fig4}
\end{figure}

Let $g$ be the genus of $\mathrm{Trop}\,C$ and choose
a homology basis 
$$
T_{\beta_1},\dots,T_{\beta_g}\in H_1(\mathrm{Trop}\,C\,;\,\ZZ).
$$
A \textit{tropical period matrix} $B_T$ is
the $g\times g$ matrix defined by $B_T:=(\ell_T(T_{\beta_i},T_{\beta_j}))_{i,j}$.
Since $\ell_T$ is non-degenerate, $B_T$ is symmetric and positive definite.
\bigskip

Here, we note the relation between the tropical length, the multiplicity and
the vertical thickness 
of the edge in $\mathrm{Trop}\,C$.
Let 
\[E:=\{((1-t)X_0+tX_1,(1-t)Y_0+tY_1)\,\vert\,0\leq t\leq 1,
X_0\lneq X_1\}\]
be a non-vertical edge of
vertical thickness $q$ and of horizontal thickness $w$. 
From the definition of vertical thickness and horizontal thickness,
it follows that
$E$ is part of the line
defined by the equation:
$aX+bY+c=(a+w)X+(b\pm q)Y+c'$
(see Definition \ref{def3.3.1}).
\begin{lemma}\label{lemma4.1}
Let $\xi:=\mathrm{g.c.d.}(q,w)$.
Then the tropical length of $E$ is expressed as:
\[
\ell_T(E)=\frac{\xi}{q}(X_1-X_0).
\]
\end{lemma}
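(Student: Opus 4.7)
The plan is to identify the primitive direction vector of the line containing $E$ from the defining equation of the edge, and then read off $\ell_T(E)$ by comparing with the horizontal span $X_1-X_0$.

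First I would observe that on the interior of $E$ the set $\Theta(P)$ is independent of $P$ (this follows by the same argument as Lemma \ref{ex-lemma-0.4}: if $\Theta(P_1)\neq\Theta(P_2)$, then $P_1$ and $P_2$ lie on different faces of the graph of $\Val(X;f_\ep)$), and moreover the common $\Theta$ consists of collinear lattice points in $\ZZ^2$. The collinearity follows from the observation that for any $w,w'\in\Theta$ the linear function $(w-w')\cdot P=\val(a_{w'})-\val(a_{w})$ must be constant along the one-dimensional edge $E$, so every $w-w'$ is perpendicular to the direction of $E$. Let $w^{(0)}$ and $w^{(1)}$ be the two extreme points of this lattice segment. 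Because $\Theta$ is collinear, the extremes of its first and of its second coordinates are both attained at $\{w^{(0)},w^{(1)}\}$, so by Definition \ref{def3.3.1} and its horizontal analogue,
\[
|w^{(1)}_1-w^{(0)}_1|=w,\qquad |w^{(1)}_2-w^{(0)}_2|=q.
\]

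Next, the identity $\val(a_{w^{(0)}})+w^{(0)}_1X+w^{(0)}_2Y=\val(a_{w^{(1)}})+w^{(1)}_1X+w^{(1)}_2Y$ holding on $E$ shows that $E$ lies on a line of the form $\pm w\cdot X\pm q\cdot Y=\mathrm{const}$. Consequently its direction vector is proportional to $(q,\mp w)$, and dividing by $\xi$ yields the primitive integer vector $(q/\xi,\mp w/\xi)$ (choosing signs so that the first coordinate is positive, which is possible since $E$ is non-vertical, i.e.\ $q>0$).

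Finally, comparing with the parametrization $E=\{(X_0,Y_0)+t(q/\xi,\mp w/\xi)\,:\,0\leq t\leq \ell_T(E)\}$ gives $X_1-X_0=\ell_T(E)\cdot(q/\xi)$, which yields the claimed formula. The only nonroutine step is the collinearity of $\Theta(P)$ along an edge; once that is established, the extremes of the coordinates of $\Theta$ are automatically realized at the geometric endpoints of the lattice segment, and everything else reduces to computing a direction vector from a single linear equation.
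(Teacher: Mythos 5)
Your proof is correct and takes essentially the same route as the paper's: both identify the primitive direction of $E$ as $(q/\xi,\mp w/\xi)$ from the fact that $E$ lies on a line with normal $(\pm w,\pm q)$, and then read off $\ell_T(E)$ by comparing with the horizontal span $X_1-X_0$. The only real difference is that you also justify that line equation (via the constancy and collinearity of $\Theta$ along the edge), which the paper simply asserts just before the lemma, whereas the paper's proof instead carries out the equivalent g.c.d.\ bookkeeping through $\eta=\mathrm{g.c.d.}(X_1,Y_1)$.
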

\begin{proof}
We can assume $(X_0,Y_0)=(0,0)$ by translation.
Then we obtain $wX_1=\pm qY_1$.
Let $\eta:=\mathrm{g.c.d.}(X_1,Y_1)$ and
\[
\left\{
\begin{array}{l}
X_1=x\eta\\
Y_1=y\eta
\end{array}
\right.,\qquad
\left\{
\begin{array}{l}
q=\mu\xi\\
w=\nu\xi
\end{array}
\right..
\]
Then, we conclude $\mu=x$ and $\nu=y$ by elementary arguments.
From the
definition of the tropical length we obtain
\begin{equation*}
\ell_T(E)=\mathrm{g.c.d.}(X_1,Y_1)=\eta=\frac{X_1}{x}=\frac{\xi}{q}X_1.
\end{equation*}
\end{proof}
\bigskip

Next, we introduce \textit{affine transformations} of tropical curves.
Let $\mathcal{T}$ be a tropical curve in $\RR^2$.
For a $2\times 2$ matrix 
$\theta=
\left(\begin{array}{@{\,}cc@{\,}}
\alpha & \beta \\
\gamma & \delta
\end{array}\right)
$, such that
$\alpha\delta-\beta\gamma=1$,
we have a new set
\[
\mathcal{U}:=\{\theta (X,Y)^T\in\RR^2\,\vert\,(X,Y)\in\mathcal{T}\}.
\]
For the complex curve $\{f_\ep(x,y)=0\}$,
this transformation associates with the translation $x\mapsto x^\delta y^{-\beta}$
$y\mapsto x^{-\gamma}y^\alpha$, which is invertible and holomorphic.
In particular, $\mathcal{U}$ is also a tropical curve associated with 
$f_\ep(x^\delta y^{-\beta},x^{-\gamma}y^\alpha)=0$.

Concerning such affine translations, we have the following fundamental result.

\begin{prop}
Let $\theta\in M_2(\ZZ)$ be a $2\times 2$ matrix with $\det{\theta}=1$. Then
\begin{enumerate}
\def\labelenumi{$($\theenumi$)$}
\def\theenumi{\roman{enumi}}
\item The length of an edge is $\theta$-invariant.
\item For an edge $L\in\RR^2$, there exists $\theta\in M_2(\ZZ)$
such that $\theta\cdot L$ is vertical.
\end{enumerate}
\end{prop}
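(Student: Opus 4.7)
Both parts reduce to the basic fact that a matrix $\theta\in M_2(\ZZ)$ with $\det\theta=1$ preserves primitivity of integer vectors, i.e.\ sends primitive lattice vectors to primitive lattice vectors. The plan is to prove this lemma first and then deduce the two statements.

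For the preservation of primitivity: suppose $(u,v)\in\ZZ^2$ has $\gcd(u,v)=1$ and write $\theta(u,v)^T=(u',v')^T$. Since $\det\theta=1$, the inverse $\theta^{-1}$ also has integer entries, so if $d\in\ZZ_{>0}$ divides both $u'$ and $v'$ then applying $\theta^{-1}$ shows that $d$ divides both $u$ and $v$; hence $d=1$.

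For (i), the plan is to use the definition of tropical length directly. Write an edge as $E=\{(X_0,Y_0)+t(u,v):0\le t\le \ell\}$ with $(u,v)$ primitive, so that $\ell_T(E)=\ell$. Applying $\theta$ gives $\theta\cdot E=\{\theta(X_0,Y_0)^T+t\,\theta(u,v)^T:0\le t\le \ell\}$, parameterised by the same range $[0,\ell]$. The primitivity lemma above shows $\theta(u,v)^T$ is again primitive, so $\ell_T(\theta\cdot E)=\ell=\ell_T(E)$, proving $\theta$-invariance of the tropical length.

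For (ii), the plan is to construct $\theta$ by hand using Bezout. Let $(u,v)$ be the primitive direction vector of $L$, so $\gcd(u,v)=1$. Choose integers $\gamma,\delta$ with $u\gamma+v\delta=1$ and set
\[
\theta=\left(\begin{array}{cc} v & -u\\ \gamma & \delta\end{array}\right).
\]
Then $\det\theta=v\delta+u\gamma=1$, so $\theta\in M_2(\ZZ)$ with determinant one, and $\theta(u,v)^T=(0,1)^T$, which is the primitive vector of a vertical edge. Hence $\theta\cdot L$ is vertical. Neither step is technically deep; the only mild subtlety is making the first row $(v,-u)$ rather than $(-v,u)$ and pairing it with Bezout so that the determinant comes out to $+1$ rather than $-1$, which is why we have $M_2(\ZZ)$-elements of determinant exactly one and not $\pm1$.
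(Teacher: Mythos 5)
Your proposal is correct and follows essentially the same route as the paper: part (i) reduces to showing that a determinant-one integer matrix sends primitive vectors to primitive vectors, and part (ii) uses the same explicit matrix with first row $(v,-u)$ and a Bezout-type second row so that the determinant is $+1$. The only difference is that you spell out the primitivity argument (via the integrality of $\theta^{-1}$), which the paper states as elementary and omits.
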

\begin{proof}
(i) Let $(u,v)$ be a primitive vector of the edge $L\in\RR^2$.
It is sufficient to prove that the image $\theta (u,v)^T=(\alpha u+\beta v,
\gamma u+\delta v)$ is also primitive.
For this, we have only to prove $\mbox{g.c.d.}(\alpha u+\beta v,
\gamma u+\delta v)=1$.
This can be proved by elementary methods and we omit the proof.\\
(ii) For the primitive vector $(u,v)$ of $L$,
it is enough to define 
$
\theta=
\left(\begin{array}{@{\,}cc@{\,}}
v & -u \\
w & z
\end{array}\right)
$
such that $vz+wu=1$. 
\end{proof}

\begin{rem}
The vertical 
and the horizontal
thickness of an edge depend on the coordinate functions $X$ and $Y$.
\end{rem}

\subsection{Main theorem}\label{sec4.3}

Now we proceed to integration theory over 
$C_\ep=V(f_\ep)$.
In order to make the problem easier,
we deal only with the case where: 
\begin{enumerate}
\def\labelenumi{\theenumi)}
\def\theenumi{\roman{enumi}}
\item $f_\ep$ is non-singular and totally non-singular (Section \ref{sec2}),
\item $C_\ep$ is non-degenerate,
\item $\mathrm{Trop}\,C$ is regular.
\end{enumerate}
Conditions i)--iii) and the \textit{genericness condition} (Section \ref{sec3})
lead to the following properties
(see Appendix):
\begin{enumerate}
\def\labelenumi{\theenumi)}
\def\theenumi{\roman{enumi}}
\setcounter{enumi}{3}
\item for each edge $E$, 
$m=\mathrm{g.c.d.}(q,w)$,
where $m,q,w$ are respectively
the multiplicity, the vertical thickness and the horizontal thickness of $E$,
\item for each edge $E=E_1\amalg\dots\amalg E_m$,
$q_1=\dots=q_m$, $w_1=\dots=w_m$, where $q_i,w_i$ are the vertical thickness 
and the horizontal thickness
of $E$.
\end{enumerate}

\begin{rem}
The curves $C$ introduced in
Examples $(\mathrm{I})$ and $(\mathrm{II})$ in Section $\ref{sec3.1}$
satisfy these conditions.
\end{rem}
We say that $C_\ep$ has a \textit{good tropicalization} if 
$C_\ep$ and $\mathrm{Trop}\,C$ satisfy the genericness condition in Section \ref{sec3}
and
conditions i)--iii) above.
\begin{rem}
The conditions i) and ii) are necessary conditions to construct
the integration theory. 
The condition iii) is required for simplicity
of the calculations.
(The author is not sure whether the condition iii) can be omitted.)
\end{rem}
\bigskip

By the approximation theorem \ref{approximationtheorem},
there exists small $\zeta>0$ such that all $C_\ep$ $(\ep\in (0,\zeta))$
are homotopic. 
Hereafter $\ep$ denotes a small real number which satisfies $0<\ep<\zeta$
unless otherwise is stated.

Let $g$ be the genus of $C_\ep$. 
Define homology cycles
$\alpha_1,\alpha_2,\dots,\alpha_g\in H_1(C_\ep;\ZZ)$
as in Figure \ref{fig6}.
Any cycle is associated with a long cylinder connects
two sub-surfaces.
Next define the
homology cycles $\beta_1,\beta_2,\dots,\beta_g\in H_1(C_\ep;\ZZ)$
such that the intersection index $\alpha_i\circ\beta_j$
is $\delta_{i,j}$,
in a canonical way.
We assume the cycles $\alpha_i=\alpha_i(\ep)$ and $\beta_i=\beta_i(\ep)$
are continuous with respect to $\ep$.
Denote the normalised holomorphic differentials over $C_\ep$
by $\omega_1,\omega_2,\dots,\omega_g\,$ $(\int_{\alpha_i}{\omega_j}=\delta_{i,j})$.
The \textit{period matrix} $B_\ep$ of the Riemannian surface $C_\ep$ is the
$g\times g$ matrix defined by
$B_\ep:=(\int_{\beta_i}{\omega_j})_{i,j}$. 
\begin{figure}[htbp]
\begin{center}
\includegraphics[bb=18 487 585 811, clip,
width=12cm
]{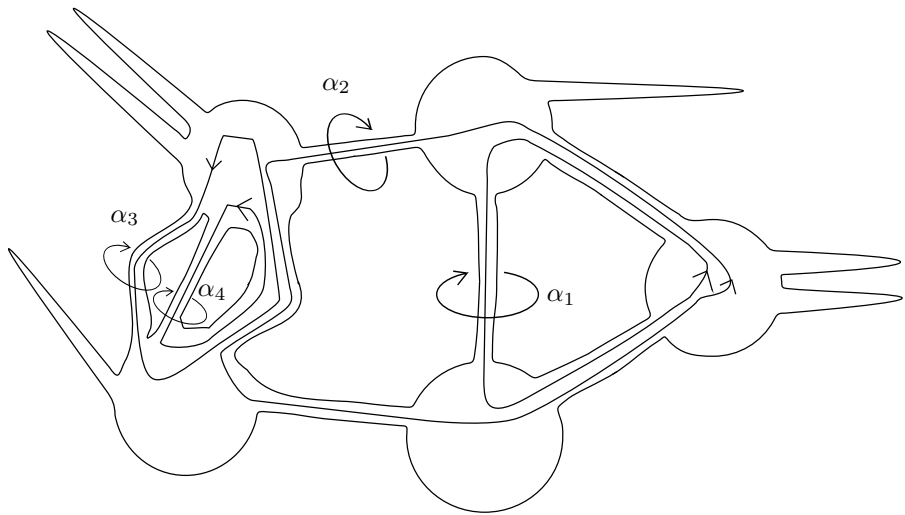}
\begin{picture}(0,180)(225,10)
\put(0,170){$\alpha_2$}
\put(85,90){$\alpha_1$}
\put(-80,120){$\alpha_3$}
\put(-47,92){$\alpha_4$}
\end{picture}
\end{center}
\caption{An example of the definition of $\alpha_1,\dots,\alpha_g;\beta_1,\dots,\beta_g$
in $H^1(C\,;\,\ZZ)$.
We always consider that an $\alpha$-cycle surrounds a cylinder.}
\label{fig6}
\end{figure}
\bigskip

Now we state the main theorem of this paper. 
Take a homology basis $T_{\beta_1},\dots,$
$T_{\beta_g}\in H_1(\mathrm{Trop}\,C\,;\,\ZZ)$
associated with a homology basis $\beta_1,\dots,\beta_g\in H_1(C\,;\,\ZZ)$.
If more than one cylinder can be associated with one tropical edge,
we say that hidden edges and
cycles exist here (see Figure \ref{fig7}).
When we define homology cycles in $H_1(\mathrm{Trop}\,C;\ZZ)$,
these edges must be distinguished.
\begin{figure}[htbp]
\begin{center}
\includegraphics[bb=84 520 552 806, clip,
width=7cm]
{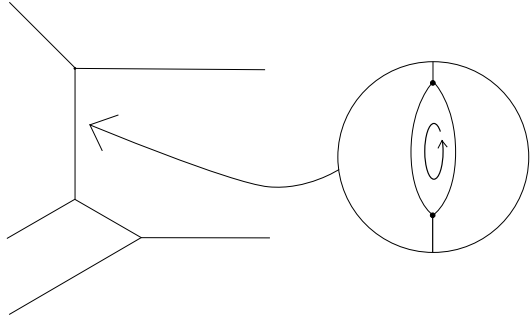}
\end{center}
\caption{If there exist an edge with multiplicity $m>1$,
we regard this edge as the union of $m$ edges
of multiplicity one.
}
\label{fig7}
\end{figure} 
Recall that
we regard
an edge $E$ of multiplicity $m$ as the union:
$E=E_1\amalg E_2\amalg\dots\amalg E_m$.
Let $B_T:=(\ell_T(T_{\beta_i},T_{\beta_j}))_{i,j}$ be a \textit{period matrix of} 
$\mathrm{Trop}\,C$.

\begin{theorem}\label{thm}
If $C_\ep$ has a good tropicalization, then
\[
B_\ep\sim\frac{-1}{2\pi \ai\ep}B_T\quad\quad (\ep\to 0).
\]
\end{theorem}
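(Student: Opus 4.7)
My plan is to use the approximation theorem (Theorem~\ref{approximationtheorem}) to describe $C_\ep$ as a gluing of genus-zero sub-surfaces (one per vertex of $\mathrm{Trop}\,C$) along long cylinders (one per component $E_i$ of each edge $E$), and then to exploit this decomposition to compute both the residues fixing the normalization of $\omega_1,\ldots,\omega_g$ and the $\beta$-period integrals. By non-degeneracy, every sub-surface $V(f^P)$ is an irreducible genus-zero curve, so holomorphic differentials on $C_\ep$ restrict to meromorphic differentials on each sub-surface with at worst simple poles at the cylinder ends; the sum of the residues on each sub-surface is zero, and the global basis $\{\omega_j\}$ is pinned down by these residue data together with the $\alpha$-normalization.

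The second step analyses the behaviour of each $\omega_j$ on a single cylinder. For the cylinder associated with a sub-edge $E_i$ of vertical thickness $q$ and horizontal thickness $w$, a natural annular coordinate $u$ is obtained from the primitive lattice vector of the edge; by Lemma~\ref{lemma4.1} and conditions (iv)--(v), the parameter $-\ep\log|u|$ varies along the cylinder by exactly $\ell_T(E_i)$. On this annulus $\omega_j$ is, up to an error $o(1)$ as $\ep\to 0^+$, a constant multiple of $du/u$. The $\alpha_k$-cycle winds once around its distinguished cylinder and not at all around others, so the normalization $\int_{\alpha_k}\omega_j=\delta_{kj}$ forces the coefficient of $du/u$ on the cylinder of $\alpha_k$ to equal $\delta_{kj}/(2\pi\ai)$, again up to an error that tends to $0$ with $\ep$.

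The third step computes the $\beta_j$-period by decomposing $\beta_j$ into arcs inside sub-surfaces and radial segments crossing cylinders. The arcs inside a sub-surface contribute integrals that stay bounded as $\ep\to 0^+$, because each $V(f^P)$ is $\ep$-independent and the restriction of $\omega_j$ there converges to a fixed meromorphic differential. The dominant contribution comes from the cylinder traversals: by Step~2, a traversal of the cylinder of $E_i$ contributes asymptotically
\[
\int_{\beta_j\cap\,\text{cylinder of }E_i}\!\!\omega_j \;\sim\; \frac{1}{2\pi\ai}\cdot\bigl(\text{signed contribution of }\omega_j\text{ on that cylinder}\bigr)\cdot\frac{-\ell_T(E_i)}{\ep}.
\]
Summing over the cylinders traversed by $\beta_j$, the signed residues assemble precisely into the tropical intersection coefficients, and one recovers $-\ell_T(T_{\beta_i},T_{\beta_j})/(2\pi\ai\ep)=-(B_T)_{ij}/(2\pi\ai\ep)$, which is Theorem~\ref{thm}.

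The main obstacle is controlling the errors uniformly in $\ep$: one must show rigorously that $\omega_j$ is genuinely close to $\delta_{kj}\,du/(2\pi\ai\,u)$ on each cylinder, with error of order $o(1/\ep)$, so that the $O(1/\ep)$ contributions from cylinders dominate the $O(1)$ contributions from the sub-surfaces. This combines the surjectivity theorem (Theorem~\ref{surjectivitytheorem}) with a normal-families argument for the restrictions of the normalized differentials to each sub-surface, together with careful bookkeeping of the multiplicity $m$, the thicknesses $q,w$ and the $\gcd$-factor in Lemma~\ref{lemma4.1}; it is precisely the hypotheses (iv)--(v) of the good-tropicalization assumption that make this bookkeeping produce exactly $\ell_T$ in the final sum.
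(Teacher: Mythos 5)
Your overall picture --- genus-zero sub-surfaces joined by long cylinders, with the $\beta$-periods dominated by cylinder traversals of tropical length divided by $\ep$ --- matches the geometry the paper establishes via Theorem \ref{approximationtheorem}, but your argument has a genuine gap at precisely the step you yourself flag as ``the main obstacle,'' and that step is the substance of the theorem. You assert that on each cylinder the normalized differential $\omega_j$ equals $\delta_{kj}\,du/(2\pi\ai\,u)$ up to an admissible error, the coefficient being ``forced'' by $\int_{\alpha_k}\omega_j=\delta_{kj}$. The $\alpha$-normalization only controls the $u^{-1}$ Laurent coefficient of $\omega_j$ on the annulus; it gives no control on the remaining coefficients, whose integral along the length of the cylinder must be shown to be $o(1/\ep)$, nor does it determine the coefficient of $du/u$ on cylinders that no $\alpha_k$ encircles (edges that are bridges of $\mathrm{Trop}\,C$, or the several cylinders lying over one edge of multiplicity $m>1$), which $\beta_j$ may nevertheless traverse. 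For those cylinders the coefficient is fixed only by the residue data of the limiting differentials on \emph{all} sub-surfaces simultaneously --- a global statement you assume (``the basis is pinned down by these residue data'') rather than prove. Similarly, the claim that $\omega_j$ restricted to a sub-surface converges to a fixed meromorphic differential with at worst simple poles is exactly the degeneration statement that needs proof; invoking the surjectivity theorem \ref{surjectivitytheorem} plus ``a normal-families argument'' does not supply it, since normal families only give subsequential limits with no identification of the limit and no uniformity in $\ep$.

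The paper takes a different route precisely to avoid estimating the normalized $\omega_j$ directly: it constructs explicit rational differentials $\omega_E=\phi(x)\,y^{N-I-1}dx/f_y$ attached to tropical edges, computes their residues, $\alpha$-periods and cylinder integrals by dominant-balance estimates on each region of $\mathrm{Trop}\,C$ (see (\ref{eq4.14}), (\ref{eq4.18}), Lemma \ref{lemma4.4}, which produce the $-(2\pi\ai\ep)^{-1}\ell_T$ factors you want), corrects them by normalized third-kind differentials using Lemma \ref{lemma4.2} --- whose proof itself runs through the Riemann bilinear relation (\ref{eq4.11}) --- and only then transfers the computation to the normalized basis through the spaces $\mathcal{M}$ and $\mathcal{F}$, concluding $B\sim B'$ with $B'$ evaluated from (\ref{eq4.30}). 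If you wish to salvage your direct approach, you would need a quantitative estimate of the type ``the sup-norm on each cylinder of $\omega_j$ minus its $du/u$ part is bounded uniformly in $\ep$,'' together with an identification of the limiting residue configuration on every sub-surface; neither is in the paper, and neither follows from the bookkeeping of $q$, $w$, $m$ and Lemma \ref{lemma4.1} alone.
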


The rest of this paper is devoted to
the proof of this theorem.

\subsubsection*{Preliminaries for integral calculus}

In this section, we study integral calculus over $V(f_\ep)$
and the asymptotic behaviour of integrals when $\ep$ tends to zero.
For a pL-polynomial $f_\ep$, let us define the variety
$\tilde{V}(f):=\{(x,y,\ep)\in\CC\RR^2\times(0,1)\,\vert\,f_\ep(x,y)=0\}$.
Denote the natural embedding $\CC\RR^2\hookrightarrow\CC\RR^2\times(0,1)$\
;\ $(x,y)\mapsto(x,y,\ep)$
by $j_\ep$.
Naturally it follows that $j_\ep^{-1}(\tilde{V}(f))=V(f_\ep)$.

Let $\mathcal{U}\subset \tilde{V}(f)$ be a simply connected domain
and $\omega_\ep$ be a $1$-form over $\mathcal{U}$ such that
i) $\omega_\ep$ is a holomorphic differential over $j_\ep^{-1}(\mathcal{U})=
\mathcal{U}\cap V(f_\ep)$,
and ii) $\omega_\ep$ is continuous with respect with $\ep$.
By elementary arguments in complex analysis, we can prove the existence
of a primitive function $\Omega_\ep$ of $\omega_\ep$.
The integration of $\omega_\ep$ along a smooth path
$[0,1]\to j_\ep^{-1}
(\mathcal{U})\,;\,\theta\mapsto \gamma_\ep(\theta)$ is defined by the
formula $\int_{\gamma_\ep}{\omega_\ep}:=\Omega_\ep(\gamma_\ep(1))-
\Omega_\ep(\gamma_\ep(0))$.

Let $(0,1)\times[0,1]\mapsto \tilde{V}(f)\,;\,(\ep,\theta)\mapsto 
\gamma_\ep(\theta)$ be a smooth map such that $\gamma_\ep(\theta)\in V(f_\ep)$
for all $\ep$ and $\theta$.
Our aim in 
this section is to evaluate the asymptotic behaviour of the value
$\int_{\gamma_\ep}{\omega_\ep}$ when $\ep$ goes to zero.
\bigskip

Due to the surjectivity theorem \ref{surjectivitytheorem},
more detailed information can be added to the definition of a path
on $\tilde{V}(f)$.
Let $v_1=(X_1,Y_1)$ and $v_2=(X_2,Y_2)$
be two points in $\mathrm{Trop}\,C=TV(f_\ep)$ and 
$\mathcal{R}^{v_i}f_\ep=f^{v_i}+\Delta^{v_i}$ $(i=1,2)$
be the canonical expressions of $f_\ep$ at $v_i$.

Consider a path $\gamma':[0,1]\to V(f^{v_1})$ on the variety $V(f^{v_1})$.
By the surjectivity theorem, there exists the smooth map 
$(0,1)\times[0,1]\to \tilde{V}(f)\,;\, (\ep,\theta)\mapsto \gamma_\ep(\theta)$
such that 
\[
\gamma_\ep(\theta)=(x_\ep(\theta)\,\ee^{X_1},y_\ep(\theta)\,\ee^{Y_1})\in V(f_\ep),
\quad\text{ and }\quad
\lim_{\ep\to 0^+}(x_\ep(\theta),y_\ep(\theta))\to \gamma'(\theta),
\ (\forall \theta)
,\]
where $x_\ep(\theta),y_{\ep}(\theta)\in R^\times$ for any $\theta$.
We often abbreviate the above
notation as $\gamma(\theta)=(x\ee^{X_1},y\ee^{Y_1})$ $x,y\in R^\times$
if there is no chance of confusion.

Similarly, 
if $\tilde{V}(f)$ is connected,
for two points $(x_1,y_1)\in V(f^{v_1})$ and $(x_2,y_2)\in V(f^{v_2})$,
there exists a smooth map
$(0,1)\times[0,1]\to \tilde{V}(f)\,;\, (\ep,\theta)\mapsto \gamma_\ep(\theta)$
such that 
\[
\gamma_\ep(0)=(x_{1,\ep}\ee^{X_1},y_{1,\ep}\ee^{Y_1}),\quad
\gamma_\ep(1)=(x_{2,\ep}\ee^{X_2},y_{2,\ep}\ee^{Y_2}),\]
and
$\lim_{\ep\to 0^+}(x_{i,\ep},y_{i,\ep})\to (x_i,y_i)$.
We often use the notation 
$\int_{\gamma(0)}^{\gamma(1)}{\omega}$ instead of $\int_{\gamma_\ep}{\omega_\ep}$
if the meaning is clear.

\subsubsection{Approximation of integral calculus}

In the rest of this paper, $b_{i,j}$ denotes $\int_{\beta_j}{\omega_i}$.

Let $\mathfrak{S}$ be the set of $1$-forms over $\tilde{V}(f_\ep)$
such that i) $\omega_\ep\in\mathfrak{S}$
is a meromorphic differential over $j_\ep^{-1}(\tilde{V}(f))=V(f_\ep)$,
ii) $\omega_\ep\in\mathfrak{S}$ is continuous with respect to $\ep$.
Define the subsets $\mathcal{M}$ and $\mathcal{F}$ by the formulae
\begin{gather*}
\textstyle
\mathcal{M}:=\!\{\omega_\ep\in\mathfrak{S}\,\vert\,\int_{\beta_k}{\omega_\ep}
=\sum_{i,j=1}^g{c_{i,j}^k(\ep)\,b_{i,j}},\text{ where } 
-\lim_{\ep\to 0^+}{\ep\log c_{i,j}^k}>0,\ \ \forall i,j,k
\},\\
\textstyle
\mathcal{F}:=\{\omega_\ep\in\mathfrak{S}\,\vert\,
\lim_{\ep\to 0^+}{\vert\int_{\beta_i}{\omega_\ep}\vert}<+\infty,\ \ \forall i\}.
\end{gather*}
It is clear that they are $R^\times$-vector spaces.
\begin{rem}
If the main theorem \ref{thm} is true, is follows that
$\mathcal{M}\subset\mathcal{F}$.
\end{rem}
\bigskip

We say a differential over Riemannian surface 
is \textit{of the first kind} if it has no singularity,
\textit{of the second kind} if it has poles without residue and
\textit{of the third kind} if it has poles with non-zero residue.
For the proof of the main theorem, we start with differentials of the third kind.
Let $P_+,P_-$ be two points on $C_\ep$.
A smooth curve $C_\ep$ has the normalised
differential of the third kind
$\omega_{P_+-P_-}=\omega_{P_+-P_-}(\ep)$,
possessing simple poles with
residue $+1/(2\pi \ai)$ at $P_+$ and $-1/(2\pi \ai)$ at $P_-$,
and holomorphic
over $C_\ep\setminus \{P_+,P_-\}$ satisfying
$\int_{\alpha_i}{\omega_{P_+-P_-}}=0\ (i=1,2,\dots,g)$.
Generally, for $n$ points
$P_1,\dots,P_n\in C_\ep$ and complex numbers $c_1+\dots+c_n=0$,
there is a unique normalised
differential $\omega_{c_1P_1+\dots+c_nP_n}$,
with residue $c_i/(2\pi \ai)$ at $P_i$ $(i=1,\dots,n)$.

Recall that a point in $\{x=\infty,0\}\cup\{y=\infty,0\}$
is associated with a leaf in $\mathrm{Trop}\,C$. (cf. Section
\ref{sec3.1}).
\begin{lemma}\label{lemma4.2}
Let $P_+,P_-\in C_\ep$ are two points
in $\{x=\infty\}\cup\{x=0\}\cup\{y=\infty\}
\cup\{y=0\}
$
which are associated with the \textit{same leaf} in $\mathrm{Trop}\,C$.
Then it follows that 
$\omega_{P_+-P_-}\in\mathcal{M}+\mathcal{F}$.
\end{lemma}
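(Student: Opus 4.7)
The plan is to invoke the classical reciprocity between Abelian differentials of the first and third kinds, rewriting $\int_{\beta_k}\omega_{P_+-P_-}$ as an integral of a normalized holomorphic differential along a path from $P_-$ to $P_+$; the hypothesis that $P_\pm$ lie on a common leaf of $\mathrm{Trop}\,C$ will then let us choose such a path avoiding every long cylinder.

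Concretely, for any path on $C_\ep$ from $P_-$ to $P_+$ avoiding the $\beta$-cycles, the standard reciprocity law gives
\[
\int_{\beta_k}\omega_{P_+-P_-}=\int_{P_-}^{P_+}\omega_k,\qquad k=1,\ldots,g.
\]
The common leaf $L$ containing both points must have multiplicity $m\ge 2$, so $P_\pm$ are tips of two distinct horns attached to $L$; let $v$ be the vertex of $\mathrm{Trop}\,C$ at the base of $L$. Using Theorem \ref{approximationtheorem}, one builds a path $\gamma_\ep$ from $P_-$ to $P_+$ whose image lies in the union of those two horn regions together with a tubular neighborhood of the sub-surface $V(f^v)$, so that $\gamma_\ep$ never traverses any long cylinder joining two distinct sub-surfaces.

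Next I would split $\int_{\gamma_\ep}\omega_k=I_-+I_{\mathrm{sub}}+I_+$ into the two horn portions and one sub-surface portion, and analyse each piece using the local models provided by the approximation theorem. Each horn tip is a regular point of $\omega_k$, so $I_\pm$ is the integral of a smooth form over a compact arc; its bounded principal part contributes to $\mathcal{F}$, while the $\ep$-dependent tails, of order $\ee^\delta$ for some $\delta>0$ because $|y|$ or $|x|$ must stretch by a factor $\ee^{-\delta}$ across the horn, take the form $\sum c_{i,j}^k(\ep)\,b_{i,j}$ with $-\lim_{\ep\to 0^+}\ep\log c_{i,j}^k(\ep)>0$ and therefore belong to $\mathcal{M}$. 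The sub-surface piece $I_{\mathrm{sub}}$ is an integral of $\omega_k$ along a compact path in a tubular neighborhood of $V(f^v)$ that remains bounded as $\ep\to 0^+$, again contributing to $\mathcal{F}$.

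The main obstacle is the uniform asymptotic control of $\omega_k$ on the horn and sub-surface regions. Although Theorem \ref{approximationtheorem} gives a global approximation of $C_\ep$, the normalized differentials $\omega_k$ are characterized by the cycle condition $\int_{\alpha_j}\omega_k=\delta_{j,k}$, so extracting their local behaviour along each horn and each sub-surface requires showing that the normalization forces $\omega_k$ to scale appropriately on regions away from the cylinder it measures. This is where the ``same leaf'' hypothesis is essential: without it the path $\gamma_\ep$ would have to cross a long cylinder, producing a contribution of order $1/\ep$ belonging to neither $\mathcal{M}$ nor $\mathcal{F}$. The required decay estimates rest on the factorization $f^P=f^P_1\cdots f^P_m$ of the truncation along $L$ together with the canonical expression $\mathcal{R}^P f_\ep=f^P+\Delta^P$ of Proposition \ref{ex-prop-2.1}.
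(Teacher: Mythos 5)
Your opening identity $\int_{\beta_k}\omega_{P_+-P_-}=\int_{P_-}^{P_+}\omega_k$ is the same reciprocity the paper exploits, but beyond that point your argument has a genuine gap, and it is exactly the one you flag as ``the main obstacle''. Your plan needs uniform asymptotic control of the normalized differentials $\omega_k$ on the horns and on the sub-surface $V(f^v)$, in order to split $\int_{P_-}^{P_+}\omega_k$ into a bounded part plus terms of the form $\sum_{i,j}c^k_{i,j}(\ep)\,b_{i,j}$ with exponentially small $c^k_{i,j}$. Nothing in Theorem \ref{approximationtheorem} or the surjectivity theorem provides that: these results approximate the curve $C_\ep$, not the differentials, and the $\omega_k$ are characterized only implicitly by $\int_{\alpha_j}\omega_k=\delta_{j,k}$. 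Knowing how the $\omega_k$ distribute over the pieces of $C_\ep$ is essentially the content of the main theorem, so assuming it at this stage is close to circular; you name the difficulty but do not resolve it. A secondary issue: even granting your estimates for the numbers $\int_{\beta_k}\omega_{P_+-P_-}$, membership in $\mathcal{M}+\mathcal{F}$ is a statement about differentials, so you must still exhibit an actual splitting $\omega_{P_+-P_-}=\omega'+\omega''$ with $\omega'\in\mathcal{M}$, $\omega''\in\mathcal{F}$, which your period-by-period bookkeeping does not automatically produce.

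The paper avoids ever estimating the $\omega_k$ by constructing an explicit comparison differential $\omega_{\mathrm{f}}=\phi(x)\,dx/(y\,f_y)$, with $\phi$ defined in (\ref{eq4.5}) and $\omega_{\mathrm{f}}$ in (\ref{eq4.3}), whose asymptotics are computable by hand: its residues are $\pm 1/(2\pi\ai)+o(\ee^0)$ at $P_\pm$ and $o(\ee^0)$ elsewhere (Sublemma 1), its $\alpha$-periods are $o(\ee^0)$, and its $\beta$-periods remain bounded, so $\omega_{\mathrm{f}}\in\mathcal{F}$ (Sublemma 2, by an explicit logarithmic integration along each cylinder). The Riemann bilinear relation (\ref{eq4.11}), applied to the pairs $(\omega_{\mathrm{f}},\omega_i)$ and $(\omega_{P_+-P_-},\omega_i)$, then yields $\omega_{\mathrm{f}}-\{1+o(\ee^0)\}\,\omega_{P_+-P_-}\in\mathcal{M}$, and the lemma follows. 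Note also that the ``same leaf'' hypothesis enters the paper's proof not through a choice of path avoiding cylinders, but through the definition of $\phi$, which pairs the two roots $u_{k_1}\ee^{B}$ and $u_{k_2}\ee^{B}$ of $a_N$ sitting under the same leaf so that the unwanted residues cancel to $o(\ee^0)$. To make your route work you would need to supply some explicit construction of this kind (or an independent proof of the decay of the $\omega_k$ away from ``their'' cylinders), which is precisely the missing step.
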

\begin{proof}
Let $L\subset\mathrm{Trop}\,C$ be the leaf which includes $P_{\pm}$.
By rotation, 
$L$ can be assumed to be vertical tending to
$Y= +\infty$: 
$
L=\{(B,Y)\,\vert\,Y\geq \mathcal{N}_N(B)\}
$,
where $\mathcal{N}_N(X)$ is a tropical function as
defined in Section \ref{sec3}.
Hereafter, we denote this function by $\mathcal{N}(X)$.

Let 
$\textstyle
f_\ep(x,y)=\sum_{i=0}^N{a_i(x)y^{N-i}},
a_N(x)=c\ee^{A}x^m\prod_{j=1}^d
(x-u_j\ee^{B_j}),
$ 
$(c,u_j\in \rbatu\!,m\in\NN_{\geq 0},
A,B_j\in \QQ_{\geq 0})$
be the defining polynomial of $C_\ep$.
By the above assumption 
the $y$-coordinate of $P_\pm$ equals $0$.
Then
the $x$-coordinate of $P_\pm$ can be written as
$
x(P_+)=u_{k_1}\ee^{B}
$,
$x(P_-)=u_{k_2}\ee^B
$
for some
$k_1,k_2\in\{1,2,\dots,d\}$
such that
$B=B_{k_1}\!\!=B_{k_2}$.

Now we proceed to integration calculus.
Consider the polynomial 
$\phi(x)$ defined by
\begin{equation}\label{eq4.5}
\frac{1}{2\pi \ai}\left\{\frac{1}{x-u_{k_2}\ee^{B}}-
\frac{1}{x-u_{k_1}\ee^{B}}
\right\}
=\frac{\phi(x)}{a_N(x)}.
\end{equation}
Define the new differential $\omega_\mathrm{f}$ by
\begin{equation}\label{eq4.3}
\omega_{\mathrm{f}}:=\frac{\phi(x)\, dx}{y\,f_y(x,y)},\quad\quad\quad
\text{where } 
f_y(x,y):=\partial_y f_{\ep}(x,y)
=\textstyle\sum_{i=0}^{N-1}{(N-i)\,a_{i}(x)\,y^{N-i-1}}.
\end{equation}
The singularity of $\omega_\mathrm{f}$
must be contained in $\{x=\infty\}\cup\{y=\infty,0\}$.
($\because$
$dx/f_y$ is always holomorphic over smooth plane curves).

The following sublemmas describes the behaviour of the differential
$\omega_{\mathrm{f}}$.

\begin{sub}
The distribution of residues of $\omega_{\mathrm{f}}$ is given as follows: \\
i) $\mathrm{Res}_{P_+}\omega_{\mathrm{f}}=+1/(2\pi \ai)+o(\ee^0)$,\qquad 
ii) $\mathrm{Res}_{P_-}\omega_{\mathrm{f}}=-1/(2\pi \ai)+o(\ee^0)$,\qquad\\
iii) $\mathrm{Res}_P\,\omega_{\mathrm{f}}=o(\ee^0),$\quad $P\neq P_\pm$.
\end{sub}

\begin{proof}
i)--ii) Let $v\in\mathrm{Trop}\,C$ be the vertex which is at the foot of $L$
and let
$\Omega$ be the sub-surface associated with $v$.
We take small cycles
$\gamma_\pm\subset\Omega$
which loop around $P_\pm$ anti-clockwise and which satisfy
\begin{equation}\label{eq4.4}
(x,y)\in\gamma_\pm\ \Rightarrow\ x=r\ee^{B}, y=s\ee^{\mathcal{N}(B)},\quad
\exists r,s\in \rbatu.
\end{equation}
Denote the vertical thickness of the floor of $L$ by $q'$.
On $\gamma_\pm\subset\Omega$, 
the dominant terms of $f_\ep(x,y)=\sum{a_i(x)\,y^{N-i}}$ 
are $a_N$ and $a_{N-q'}y^{q'}$.
Then the dominant term of $f_y(x,y)=\sum{(N-i)a_i(x)y^{N-i-1}}$
is $q'a_{N-q'}y^{q'-1}$.
Hence, on $\gamma_\pm$,
one has
\begin{equation}\label{eq4.9c}
y\,f_y\,\sim\, q'a_{N-q'}\,y^{q'}
=-q'a_N+\cdots.
\end{equation}
For the second equation in (\ref{eq4.9c}), 
we used $f_\ep(x,y)=0$.
Then, we obtain
\begin{equation}\label{eq4.6}
\int_{\gamma_\pm}{\omega_{\mathrm{f}}}
\sim\int_{\gamma_\pm}{\frac{\phi(x)\,dx}{-q'a_N(x)}}.
\end{equation}
We claim that the integral on the right hand side
takes the value $\mp 1$. 
To prove this, we recall the relation
\[
(x,y)\in\Omega
\quad\Rightarrow\quad
0=f_\ep(x,y)=a_{N-q'}y^{q'}+a_N+o(\ee^{q'\mathcal{N}(B)+\val(f_{N-q'})}).
\]
Hence $q'$ of 
the zeros 
of $a_N(x)$
satisfy the equation
$(x,y)=(u_{k_1}\ee^B,o(\ee^{\mathcal{N}(B)}))$,
which implies these points $(x,y)$ are
in the horn containing $P_+$.
(There
also exist $q'$ points satisfying $x=u_{k_2}\ee^{B},y=o(\ee^{\mathcal{N}(B)})$
in the horn containing $P_-$).
The circles $\gamma_+$ and $\gamma_-$ encircle these $q'$ points 
respectively. (Figure \ref{fig9}).
By definition of $\phi(x)$ (\ref{eq4.5}),
the residues of $-\phi(x)/(q'a_N(x))$ 
equal $\pm 1/q'$ at each pole.
Therefore, by the residue theorem, we obtain
\begin{equation}
\textstyle \int_{\gamma_+}{\omega_{\mathrm{f}}}=1+o(\ee^0),\quad\quad
\int_{\gamma_-}{\omega_{\mathrm{f}}}=-1+o(\ee^0).
\end{equation}
\begin{figure}[htbp]
\begin{center}
\includegraphics[bb=100 500 302 826, clip,
width=4cm]
{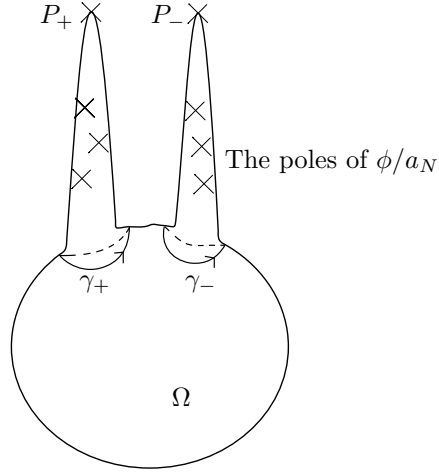}
\begin{picture}(0,0)
\put(-100,175){$P_+$}
\put(-58,175){$P_-$}
\put(-30,120){The poles of $\phi/a_N$}
\put(-85,75){$\gamma_+$}
\put(-45,75){$\gamma_-$}
\put(-50,30){$\Omega$}
\end{picture}
\end{center}
\caption{A sketch of $\Omega$. The sphere $\Omega$ has
finitely many horns. Each horn is associated with $E_i$.}
\label{fig9}
\end{figure}

(iii)
Let $P\in(\{x=\infty,0\}\cup\{y=\infty,0\})\setminus\{P_+,P_-\}$.
Defining a circle $\gamma$ in the appropriate sub-surface,
we can assume that $\gamma$ surrounds one horn containing $P$.
Let $L'$ be a leaf and
$q''$ be the vertical thickness of $L'$.
The leaf $L'$
:(a) contained in $G_N=\{Y=\mathcal{N}(X)\}$ and $\gamma$ does not surround any pole
of $\omega_{\mathrm{f}}$
or (b) contained in $\{Y<\mathcal{N}(X)\}$.
In case (b), the dominant term of $f_\ep$ is neither $a_N$ nor $a_{N-q'}y^{q'}$,
and so there exists positive $\delta$ such that $\zet{a_N}<\ee^{\delta}\zet{f_y}$.
Therefore we have
\begin{equation}\label{eq4.11a}
\textstyle
\int_{\gamma}{\omega_{\mathrm{f}}}\sim
\left\{
\begin{array}{ll}
\int{\{\phi(x)/(-q''a_N(x))\}\,dx} & 
(L'\subset G_N)\\
o(\ee^0) & (\mbox{otherwise})
\end{array}\right.,
\end{equation}
which yields $\int_{\gamma}{\omega_{\mathrm{f}}}=o(\ee^0)$
(cf. (\ref{eq4.3})). 
\end{proof}

\begin{sub}
i) $\int_{\alpha_i}{\omega_\mathrm{f}}=o(\ee^0)\quad\forall i$,\qquad
ii) $\omega_{\mathrm{f}}\in\mathcal{F}$.
\end{sub}
\begin{proof}
i)
Let $E$ be the edge associated with $\alpha_i$.
The edge $E$
: (a) contained in $G_N=\{Y=\mathcal{N}(X)\}$ and $\alpha_i$ does not surround any pole
of $\omega_{\mathrm{f}}$
or (b) contained in $\{Y<\mathcal{N}(X)\}$.
In each case, the Residue theorem and (\ref{eq4.11a})
immediately leads to
the evaluation:
$\int_{\alpha_i}{\omega_\mathrm{f}}=o(\ee^0)$.

ii) 
Let us decompose $\beta_i=\beta_i(\ep)$ into finitely many parts.
Denote one of these portions by $\gamma_\ep(\theta)$.
It is sufficient to prove the finiteness of the limit of integral
for arbitrary simply connected region $\mathcal{U}\in \tilde{V}(f)$
and arbitrary path $\gamma_\ep(\theta):(0,1)\times[0,1]\to\mathcal{U}$.
Moreover, we can assume $j_\ep^{-1}(\mathcal{U})$ is contained in some cylinder.
Let $E$ be the associated edge.
If $E\not\subset G_N$, it follows that $\int_{\gamma_\ep}{\omega_{\mathrm{f}}}=
o(\ee^0)$ by (\ref{eq4.11a}).
Let $E\subset G_N$.
Denote the vertical thickness of $E$ by $q$.
Let $x=r_1\ee^{X_1}$ be the $x$-coordinate of $\gamma_\ep(0)$
and $x=r_2\ee^{X_2}$ the $x$-coordinate of $\gamma_\ep(1)$. 
Then,
\begin{align*}
\textstyle
\int_{\gamma_\ep}{\omega}=
&\textstyle
\int_{r_1\ee^{X_1}}^{r_2\ee^{X_2}}{\{\phi(x)/(-q\,a_N(x))\}\,dx}+o(\ee^0)\\
&=\textstyle
(-2q\pi \ai)^{-1}\int_{r_1\ee^{X_1}}^{r_2\ee^{X_2}}
{\{(x-u_{k_1}\ee^B)^{-1}-(x-u_{k_2}\ee^B)^{-1}\}\,dx}
+o(\ee^0)\\
&=(-2q\pi \ai)^{-1}\log{\left\{\frac{r_2\ee^{X_2}-u_{k_1}\ee^B}{r_1\ee^{X_1}-u_{k_1}\ee^B}
\cdot\frac{r_1\ee^{X_1}-u_{k_2}\ee^B}{r_2\ee^{X_2}-u_{k_2}\ee^B}\right\}}+o(\ee^0)\\
&=(-2q\pi \ai)^{-1}\log{\left\{\frac{r_1\ee^{X_1}-u_{k_2}\ee^B}{r_1\ee^{X_1}-u_{k_1}\ee^B}
\cdot\frac{r_2\ee^{X_2}-u_{k_1}\ee^B}{r_2\ee^{X_2}-u_{k_2}\ee^B}\right\}}+o(\ee^0).
\end{align*}
Recalling that $\ee=e^{-1/\ep}$, we conclude that
the expression in the last line converges to the finite number 
\begin{center}
$(-2q\pi \ai)^{-1}$, $(-2q\pi \ai)^{-1}\log{(u_{k_1}/u_{k_2})}$, \
or \ $(-2q\pi \ai)^{-1}\log{(u_{k_2}/u_{k_1})}$
\end{center}
when $\ep\to 0^+$.
\end{proof}

In the
final step of the proof of Lemma \ref{lemma4.2},
we use the Riemann bilinear relation \cite{Fay}:
\begin{gather}
\textstyle
\sum_{i=1}^g{(A'_iB_i-A_iB_i')}=2\pi \ai\ \cdot
\sum_j{\mathrm{Res}_{P_j}(\omega^{(3)})
\cdot\int^{P_j}{\omega^{(1)}}}\label{eq4.11}\\
\textstyle
A_i=\int_{\alpha_{i}}{\omega^{(3)}},\quad
B_i=\int_{\beta_{i}}{\omega^{(3)}},\quad
A'_i=\int_{\alpha_{i}}{\omega^{(1)}},\quad
B'_i=\int_{\beta_{i}}{\omega^{(1)}}\nonumber\\
\mbox{$\omega^{(3)}$ is of the third kind. \ \ \ 
$\omega^{(1)}$ is of the first kind.}\nonumber
\end{gather}
Applying this formula for $\omega_\mathrm{f}$ (of third kind) 
and $\omega_i$ (of first kind)
$(i=1,\dots,g)$,
we obtain
\begin{equation}
\textstyle
\int_{\beta_i}{\omega_\mathrm{f}}-o(\ee^0)\cdot\sum_{l=1}^g{(\int_{\beta_l}{\omega_i})}
=(1+o(\ee^0))(\int_{P_-}^{P_+}{\omega_i}).
\end{equation}
due to Sublemma 1 and 2
($\because$ $A'_i=\delta_{i,j}$, $A_i=o(\ee^0)$).
On the other hand, applying the formula for $\omega_{P_+-P_-}$ and $\omega_i$,
we obtain
\begin{equation}
\textstyle
\int_{\beta_i}{\omega_{P_+-P_-}}=\int_{P_-}^{P_+}{\omega_i}.
\end{equation}
Thus, we derive
\[
\textstyle
\int_{\beta_i}{({\omega_\mathrm{f}}-\{1+o(\ee^0)\}\cdot\omega_{P_+-P_-})}=o(\ee^0)\times
\sum_{l=1}^g{(\int_{\beta_l}{\omega_i})}\quad(\forall i),
\]
which implies 
$\omega_\mathrm{f}-\{1+o(\ee^0)\}\cdot\omega_{P_+-P_-}\in\mathcal{M}$.
This relation can be rewritten as
$(1+o(\ee^0))\omega_{P_+-P_-}\in\mathcal{M}+\mathcal{F}$,
or $\omega_{P_+-P_-}\in\mathcal{M}+\mathcal{F}$.
\end{proof}

\subsubsection{Differentials associated with edges}

Next we consider \textit{differentials associated with edges}.
Let $E'\subset\mathrm{Trop}\,C$ be an edge of multiplicity $m$.
By rotation, it can be assumed that $E'$ is vertical without loss of generality. 
Denote the horizontal thickness of the vertical edge $E'$ by $w$.

Let $E'=\{(B,(1-t)Y_0+tY_1)\,\vert\,0\leq t\leq 1,Y_0<Y_1\}$ and
\[
\begin{array}{c}
\{\mbox{the ceiling of $E'$}\}\subset G_{I+1}=\{Y=\mathcal{N}_{I+1}(X)\},\\
\{\mbox{the floor of $E'$}\}\subset G_I=\{Y=\mathcal{N}_{I}(X)\}.
\end{array}
\]

By definition,
it follows that $Y_1=\mathcal{N}_{I+1}(B)$ and $Y_0=\mathcal{N}_I(B)$:
Because $E'$ is of finite length, it follows that $1\leq I\leq N-1$.
The defining polynomial $f_\ep(x,y)$ of $C_\ep$ is of the form
\[\textstyle
f_\ep(x,y)=
\sum_{i=0}^N{a_{i}(x)y^{N-i}},\quad a_{I}(x)=c\ee^Ax^n\prod_j{(x-u_j\ee^{B_j})},
\]
where $c,u_j\in \rbatu$, $A,B_j\in\QQ_{>0}$, $n\in\NN$. 
We rewrite the polynomial $a_I(x)$ as
\[\textstyle
a_I(x)=c\ee^Ax^n\prod_{j=1}^w{(x-u_j\ee^{B})}\cdot
\prod_{j>\omega}{(x-u_j\ee^{B_j})},
\]
where $w$ is the horizontal thickness of $E'$ and $B_j\neq B$ $(j>w)$.
Let $E'=E_1\amalg\cdots\amalg E_w$ be the decomposition
into edges of multiplicity one.
Note that the horizontal thickness of $E_i$ equals $1$.

We define \textit{the differential associated with
the edge} $E\equiv E_1$
by:
\begin{equation}\label{eq4.14c}
\omega_E:=\frac{\phi(x)\cdot y^{N-I-1}\,dx}{f_y},\quad\quad
\frac{\phi(x)}{a_{I}(x)}:=
\frac{1}{2\pi \ai}\frac{-1}{(x-u_{1}\ee^{B})}.
\end{equation}

Let $(x,y)=(r\ee^X,s\ee^Y)$ be a point on $C_\ep$.
Assume $(X,Y)\in G_J=\{Y=\mathcal{N}_J(X)\}$
on the edge of vertical thickness $q$.
If $J=I,I+1$,
we can use the estimation 
\begin{align*}
yf_y&=\left\{
\begin{array}{ll}
(N-I+q)\,a_{I-q}y^{N-I+q}+(N-I)\,a_Iy^{N-I}+\cdots&(J=I)\\
(N-I-q)\,a_{I+q}y^{N-I-q}+(N-I)\,a_Iy^{N-I}+\cdots&(J=I+1)\\
\end{array}
\right.\\
&=\left\{
\begin{array}{ll}
-q\,a_Iy^{N-I}+\cdots&(J=I)\\
q\,a_Iy^{N-I}+\cdots&(J=I+1)\\
\end{array}
\right..
\end{align*}
This implies that
\begin{equation}\label{eq4.14}
\omega_E\sim
\left\{
\begin{array}{ll}
\{-\phi(x)/q\,a_I\}\,dx&\mbox{ if } (X,Y)\in G_I\\
\{\phi(x)/q\,a_I\}\,dx
&\mbox{ if } (X,Y)\in G_{I+1}
\end{array}\right..
\end{equation}
If $J\neq I,I+1$, $a_I\,y^{N-I}$ cannot be a dominant term of $yf_y$, and
it follows that 
\begin{equation}\label{eq4.19}
\omega_E=o(\ee^0)\,dx\qquad\mbox{if}\qquad 
(X,Y)\in G_{J}\ (J\neq I,I+1).
\end{equation}

Next we study the estimation of $\omega_E$ on vertical edges.
Let $M\subset\mathrm{Trop}\,C$ be a vertical edge whose 
ceiling is contained in
$G_{J+1}$ and whose
floor is contained in
$G_J$. Let us rewrite the expression of $\omega_E$ into
\[
\omega_E=-\phi(x)y^{N-I-1}\frac{dy}{f_x},\quad
\phi(x)=\frac{-1}{2\pi \ai}\cdot c\ee^Ax^n\textstyle
\prod_{j\neq 1}{(x-u_j\ee^{B_j})}.
\]
($\because$ $\frac{dx}{f_y}=-\frac{dy}{f_x}$ for smooth curve $C_\ep$).

Due to the definition of $M$,
the dominant term
of $f_\ep(x,y)$ on 
\[\{(x,y)\,\vert\,(\val(x),\val(y))\in M\}\]
is $a_J\,y^{N-J}$.
We claim that
the dominant term of $f_x$ is $a_J'\,y^{N-J}$, where 
$a'_i(x):=\frac{d}{dx}a_i(x)$ and
$f_x=\sum_i{a_i'y^{N-i}}$.

\begin{lemma}
The dominant term of $f_x$ on $M$ is $a_J'\,y^{N-J}$.
\end{lemma}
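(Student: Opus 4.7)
The plan is to reduce the statement to direct valuation arithmetic, using the product decomposition (\ref{eq3.2}) of the coefficients $a_i(x)$.

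First I would apply the logarithmic derivative to $a_i(x) = c_i\ee^{A_i} x^{m_i}\prod_{j=1}^{d_i}(x - u_{i,j}\ee^{B_{i,j}})$, writing
\[
\frac{a_i'(x)}{a_i(x)} = \frac{m_i}{x} + \sum_{j=1}^{d_i}\frac{1}{x - u_{i,j}\ee^{B_{i,j}}}.
\]
For $x = r\ee^{B}$ with $r\in\rbatu$ and $\val(x)=B$ (where $B$ is the $X$-coordinate of the vertical edge $M$), a term-by-term inspection shows that each summand has valuation $-\max(B,B_{i,j})$ or $-B$ (for the $m_i/x$ piece), hence $\geq -B$. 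Multiplying through by $a_i(x)$ (which has valuation $F_i(B)$ by (\ref{eq3.19})) yields the universal bound
\[
\val(a_i'(x)) \geq F_i(B) - B, \qquad i=0,1,\dots,N.
\]

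Next, I would pin down that equality holds for $i=J$. On $M$, by assumption $a_J(x)$ carries the $w$ roots $u_1\ee^{B},\dots,u_w\ee^{B}$. Each of the corresponding terms $1/(x-u_j\ee^{B})$ ($j\le w$), together with the terms indexed by the remaining $B_{J,j}>B$ and the $m_J/x$ contribution, realises valuation exactly $-B$. Their leading ($\ee^{-B}$)-coefficients form a rational function of the generic parameter $r$ (with the leading parts of the $u_j$'s feeding in through $\htht\,u_{i,j}$), which by the genericness condition of Section \ref{sec3} is nontrivial and hence nonzero for generic $r$. Thus $\val(a_J'(x)) = F_J(B) - B$.

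Finally, since $M\subset\mathfrak{D}_J$ and on its interior the dominance $F_J(B)+(N-J)Y < F_i(B)+(N-i)Y$ holds strictly for all $i\neq J$, subtracting $B$ from both sides and combining with the two previous steps gives
\[
\val(a_J'(x)\,y^{N-J}) = F_J(B) - B + (N-J)Y < F_i(B) - B + (N-i)Y \leq \val(a_i'(x)\,y^{N-i}),
\]
for every $i\neq J$, which is exactly the claim that $a_J'\,y^{N-J}$ dominates $f_x$ on the interior of $M$. The only nontrivial step is the second one: establishing non-cancellation of the leading $\ee^{-B}$ contribution in $a_J'(x)/a_J(x)$, which is precisely where the genericness hypothesis on the roots $u_{i,j}$ is essential; everything else is formal valuation bookkeeping.
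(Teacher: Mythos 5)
Your outline (a lower bound $\val(a_i'(x))\geq F_i(B)-B$ for all $i$, equality for $i=J$, then the tropical dominance of the index $J$ on $M$) matches the paper's, but the justification of the two valuation statements has a genuine gap: you treat the points of $C_\ep$ lying over $M$ as having a \emph{generic} leading coefficient $r$, whereas on the cylinder over a vertical edge the coordinate $x$ is pinned near a root of $a_J$. Indeed, since on the interior of $M$ the group $a_J(x)y^{N-J}$ is the tropically dominant one and $f_\ep(x,y)=0$, the value $a_J(x)$ must be abnormally small, which forces $x=u_{J,k}\ee^{B}+o(\ee^{B})$ for some root $u_{J,k}\ee^{B}$ of $a_J$; this is exactly the paper's (\ref{eq4.16a}). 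Consequently: (i) $\val(a_J(x))$ is strictly larger than $F_J(B)$, so your step ``multiplying through by $a_i(x)$, which has valuation $F_i(B)$ by (\ref{eq3.19})'' is unjustified precisely for $i=J$ (and for $i\neq J$ it already needs the genericness condition); (ii) the logarithmic-derivative term $1/(x-u_{J,k}\ee^{B})$ has valuation strictly \emph{smaller} than $-B$, so your term-by-term bound ``$\geq -B$'' fails for the one summand that matters (there is also the slip $-\max(B,B_{i,j})$ where $-\min(B,B_{i,j})$ is meant); and (iii) your non-cancellation argument appeals to nonvanishing of a rational function at generic $r$, but $r$ is not generic here, it converges to $\htht{(u_{J,k})}$, so genericity of $r$ is simply not available.

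The paper circumvents all of this by expanding $a_J'$ with the product rule instead of the logarithmic derivative: at $x=u_{J,k}\ee^{B}+o(\ee^{B})$ the genericness condition on the $\htht{(u_{i,j})}$ guarantees that $(x-u_{J,k}\ee^{B})$ is the \emph{only} small factor, so the dominant term of $a_J'(x)$ is the single product omitting that factor, with exact valuation $F_J(B)-B$ and nonvanishing leading part ((\ref{eq4.16a})--(\ref{eq4.16})); the bound $\val(a_i'(x))\geq F_i(B)-B$ for the other $i$ also follows term by term from the product-rule expansion, with no genericity assumption on the point. If you wish to keep the logarithmic-derivative formulation, you must first establish $x=u_{J,k}\ee^{B}+o(\ee^{B})$, observe that the excesses cancel (if $\val(x-u_{J,k}\ee^{B})=B+s$ then $\val(a_J(x))=F_J(B)+s$ while $\val(a_J'/a_J)=-(B+s)$), and prove non-cancellation at the specific leading coefficient $\htht{(u_{J,k})}$ rather than at a generic one. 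Your final comparison using the strict dominance of $F_J(B)+(N-J)Y$ on the interior of $M$ is the same as the paper's and is fine.
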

\begin{proof}
We consider the difference $\val(a_i)-\val(a'_i)$ $(1\leq i\leq N)$.
Let 
\[
M=\{(B_0,(1-t)Y_2+tY_3)\,\vert\,0\leq t\leq 1,Y_2<Y_3\},
\]
and $a_i=c\ee^{A_i}x^{n_i}\prod_j{(x-u_{i,j}\ee^{B_{i,j}})}$.
Then, $a'_i$ is of the form:
\[\textstyle
a'_i=
cn_i\ee^{A_i}x^{n_i-1}\prod_j{(x-u_{i,j}\ee^{B_{i,j}})}
+c\ee^{A_i}x^{n_i}\sum_k\prod_{j\neq k}{(x-u_{i,j}\ee^{B_{i,j}})}.
\]
From this expression we see that
\begin{equation}\label{eq4.15}
\val(a_i(x))-\val(a'_i(x))\leq \val(x).
\end{equation}
On the other hand, if $i=J$,
$a_J(x)$ is of the form 
\[\textstyle
a_J=c_J\ee^{A_J}x^{n_J}\prod_j{(x-u_{J,j}\ee^{B_{J,j}})},
\quad\sharp\{j\,\vert\,B_{0}=B_{J,j}\}>0.
\]
Let $\Lambda:=\{j\,\vert\,B_0=B_{J,j}\}$.
We can
assume $\Lambda=\{1,2,\dots,w'\}$ by exchanging the indices
if necessary.
Again we consider the derivative $a'_J$.
Among the factors
\[
x-u_{J,1}\ee^{B_0},\ x-u_{J,2}\ee^{B_0},\ \dots,\ 
x-u_{J,w'}\ee^{B_0},
\]
$(x-u_{J,k}\ee^{B_0})$ is the only one
that becomes very small when we take $x=u_{J,k}\ee^{B_0}+o(\ee^{B_0})$
because of the genericness condition in Section \ref{sec3}.
Thus, for fixed $k\in\Lambda$, it follows that
\begin{equation}\label{eq4.16a}
x=u_{J,k}\ee^{B_0}+o(\ee^{B_0})\ \Rightarrow\ 
a'_J(x)\sim c_J\ee^{A_J}x^{n_J}
\prod_{j\neq k}{(x-u_{J,j}\ee^{B_{J,j}})}
\end{equation}
which implies 
\begin{equation}\label{eq4.16}
\val(a_J(x))-\val(a'_J(x))=B_0=\val(x).
\end{equation}
Therefore, (\ref{eq4.15}) and (\ref{eq4.16}) give rise to
\begin{align*}
&
(\val(x),\val(y))\in M\ \Rightarrow\ 
\left\{
\begin{array}{ll}
x=u_{J,k}\ee^{B_0}+o(\ee^{B_0})\quad (\exists k)\\[1.5mm]
\val(a_Jy^{N-J})\leq \val(a_iy^{N-i})
\end{array}\right.\\
&
\ \ \Rightarrow \ \val(a'_Jy^{N-J})-\val(a'_iy^{N-i})\\
&\ \ \ \ \ \ \ \ \leq
-\val(x)+\val(a'_Jy^{N-J})+\val(x)-\val(a'_iy^{N-i})\leq 0
\end{align*}
for each $i=0,1,\dots,N$.
In particular, we can conclude that $f_x\sim a_J'y^{N-J}$ on $M$.
\end{proof}
\bigskip

Recall that the floor of $E$ is contained in $G_I$, and that
the floor of $M$ is contained in $G_J$.
From the explicit form of $a'_J$ given in (\ref{eq4.16a})
and the definition of $\phi(x)$ (\ref{eq4.14c}),
we obtain 
\begin{align*}
-&\left. \phi(x)y^{N-J}/f_x\right\vert_{(\val(x),\val(y))\in M}\\
&=\left\{\!\!
\begin{array}{ll}
(2\pi \ai)^{-1}
+\!o(\ee^0)\! & (J=I \mbox{ and } x=u_1\ee^B+o(\ee^B))\\
o(\ee^0) & (J=I \mbox{ and } x=u_j\ee^B+o(\ee^B),\ j>1)\\
o(\ee^0) & (J\neq I)
\end{array}
\right..
\end{align*}
These relations lead to the following:
\begin{equation}\label{eq4.18}
\omega_E\sim
\left\{
\begin{array}{ll}
(2\pi \ai)^{-1}(dy/y)&(\val(x),\val(y))\in E\\
o(\ee^0)\,dx&(\val(x),\val(y))\in M\neq E
\end{array}
\right..
\end{equation}
\bigskip

The three equations (\ref{eq4.14}), (\ref{eq4.19})
and (\ref{eq4.18}) gives us the singularities
of $\omega_E$.
Let $L_1$ be the leftmost leaf of $G_I$ and
$L_2$ be the leftmost leaf of $G_{I+1}$.
Denote the vertical thickness of $L_i$ $(i=1,2)$ by $q_i$ and the multiplicity by $m_i$,
respectively.
Let us consider the decomposition $L_i=L_{i,1}\amalg L_{i,2}\amalg\cdots\amalg
L_{i,m_i}$ and denote the vertical thickness of $L_{i,j}$ by $q_{i,j}$ $(q_i=q_{i,1}+\dots+q_{i,m_i})$,
the horn associated with $L_{i,j}$ by $\Sigma_{i,j}$,
the vertex
that is the end point of $L_i$ by $v_i$ and
the sphere associated with $v_i$ by $\Omega_i$.

The set $(\{x=\infty,0\}\cup\{y=\infty,0\})\cap \Sigma_{i,j}$ has only one element,
which we denote by $P_{i,j}$.
Consider cycles $\gamma_{i,j}\in\Omega_i$ which loop around 
the point $P_{i,j}$ anti-clockwise.
By (\ref{eq4.14})
it then follows that
\begin{gather}
\textstyle
\int_{\gamma_{1,j}}{\omega_E}=+(q_{1,j}/q_1)+o(\ee^0),\\
\textstyle
\int_{\gamma_{2,j}}{\omega_E}=-(q_{2,j}/q_2)+o(\ee^0).
\end{gather}
Hence, we obtain the following:
\begin{prop}\label{prop4.6}
The differential 
$\omega_E$
has a pole with residue $+q_{1,j}/(2q_1\pi \ai)$ at $P_{1,j}$
and a pole with residue $-q_{2,j}/(2q_2\pi \ai)$ at $P_{2,j}$.
\end{prop}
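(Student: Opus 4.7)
The proposition, as stated, asserts the existence and values of residues of $\omega_E$ at the points $P_{i,j}$, and the two integral formulas $\int_{\gamma_{1,j}}\omega_E = q_{1,j}/q_1 + o(\ee^0)$ and $\int_{\gamma_{2,j}}\omega_E = -q_{2,j}/q_2 + o(\ee^0)$ displayed immediately before it essentially do all the work. My plan is therefore a straightforward application of the residue theorem, with the bulk of the effort going into justifying that these cycle integrals really do compute just the residue at $P_{i,j}$ and nothing else.

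First, I would pin down the pole structure of $\omega_E$. The possible singularities of the meromorphic differential $\omega_E = \phi(x)y^{N-I-1}\,dx/f_y$ on the smooth curve $C_\ep$ can occur only at points where $x$ or $y$ equals $0$ or $\infty$ (on smooth plane curves $dx/f_y = -dy/f_x$ is holomorphic in the interior). Invoking (\ref{eq4.19}) shows $\omega_E$ is $o(\ee^0)\,dx$ on every edge outside $G_I \cup G_{I+1}$, and (\ref{eq4.18}) shows that on $E$ itself $\omega_E \sim (2\pi\ai)^{-1} dy/y$, which contributes no point poles. Consequently the genuine poles of $\omega_E$ are confined to the horns $\Sigma_{i,j}$ attached to the leaves of $G_I$ and $G_{I+1}$; in particular, each $P_{i,j}$ is a candidate pole and no others appear in $\Omega_1 \cup \Omega_2$.

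Second, I would apply the residue theorem. Each cycle $\gamma_{i,j}$ is, by construction, a small anti-clockwise loop in $\Omega_i$ that encircles the single point $P_{i,j}$ in the adjacent horn $\Sigma_{i,j}$. Assuming the enclosure claim is established (see next paragraph), the residue theorem gives
\[
\int_{\gamma_{i,j}} \omega_E \;=\; 2\pi\ai \cdot \mathrm{Res}_{P_{i,j}}\omega_E.
\]
Substituting the pre-computed values yields $\mathrm{Res}_{P_{1,j}} \omega_E = q_{1,j}/(2q_1\pi\ai) + o(\ee^0)$ and $\mathrm{Res}_{P_{2,j}} \omega_E = -q_{2,j}/(2q_2\pi\ai) + o(\ee^0)$, interpreted as the leading-order residues as $\ep \to 0^+$, matching the claim.

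The main obstacle is verifying that $\gamma_{1,j}$ encloses \emph{only} $P_{1,j}$ among the poles of $\omega_E$ in $\Sigma_{1,j}$, together with correctly recovering the weight $q_{1,j}/q_1$ from (\ref{eq4.14}). The delicate point is that (\ref{eq4.14}) replaces $\omega_E$ on $G_I$ by the rational form $-\phi(x)\,dx/(q_1 a_I(x))$ computed with the \emph{visible} thickness $q_1$ of the aggregated leaf $L_1 = L_{1,1}\amalg\cdots\amalg L_{1,m_1}$, not the individual $q_{1,j}$. The factor $q_{1,j}/q_1$ then arises geometrically in the same way as in Lemma \ref{lemma-ex-2}'s sibling calculation: the projection $\Sigma_{1,j} \to \CC^\times$ has degree $q_{1,j}$ while the leaf $L_1$ accounts for $q_1$ of the zeros of $a_I$, and the cycle $\gamma_{1,j}$ traced on the curve, when pushed forward to the $x$-plane, samples exactly the $q_{1,j}$ zeros that have coalesced into the horn $\Sigma_{1,j}$ while ignoring the remaining $q_1 - q_{1,j}$ zeros sitting in the other horns of $\Omega_1$. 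Making this fractional counting rigorous, by carefully choosing $\gamma_{1,j}$ so that under the $x$-projection it separates the cluster of zeros of $a_I$ corresponding to $L_{1,j}$ from those of the neighboring $L_{1,j'}$, is the only non-routine step; the analogous argument on $\Omega_2$ with (\ref{eq4.14}) contributes the opposite sign, producing the claimed $-q_{2,j}/(2q_2\pi\ai)$.
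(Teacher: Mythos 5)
Your skeleton is the same as the paper's: the paper obtains the proposition directly from the two loop integrals displayed just above it (asserted ``by (\ref{eq4.14})''), i.e.\ poles confined to $\{x=0,\infty\}\cup\{y=0,\infty\}$, small anticlockwise loops $\gamma_{i,j}$ around the single point $P_{i,j}$ of the horn, and the residue theorem converting $\int_{\gamma_{i,j}}\omega_E$ into the stated residues. Your first two steps therefore match. The problem is the step you yourself identify as the real work: your mechanism for the factor $q_{1,j}/q_1$ is not the right one. By the definition (\ref{eq4.14c}), $\phi(x)/a_I(x)=-\bigl(2\pi\ai\,(x-u_1\ee^{B})\bigr)^{-1}$, so $\phi$ cancels every factor of $a_I$ except $(x-u_1\ee^{B})$; the approximating form $\mp\phi(x)\,dx/(q\,a_I(x))$ in (\ref{eq4.14}) has a \emph{single} finite pole, at $x=u_1\ee^{B}$, which lies over $X=B$ (over the edge $E$), and it has no poles at the other zeros of $a_I$. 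In particular there is no cluster of ``zeros of $a_I$ coalesced into the horn $\Sigma_{1,j}$'' for $\gamma_{1,j}$ to enclose: $\Sigma_{1,j}$ sits over the \emph{leftmost} leaf $L_{1,j}$, i.e.\ over $x=\infty$, far from every zero of $a_I$.

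The correct count is a covering-degree (winding) count at $x=\infty$, not a count of enclosed finite poles. Over $L_1\subset G_I$ one has $\omega_E\sim(2\pi\ai\,q_1)^{-1}\,dx/(x-u_1\ee^{B})$ by (\ref{eq4.14}) with $q=q_1$, the vertical thickness of the whole leaf $L_1$; since the $x$-projection restricted to $\Sigma_{1,j}$ has degree $q_{1,j}$, the image $x\circ\gamma_{1,j}$ winds $q_{1,j}$ times around a large circle enclosing the unique finite pole $u_1\ee^{B}$ (equivalently, $x$ has a pole of order $q_{1,j}$ at $P_{1,j}$, so $dx/(x-u_1\ee^{B})$ pulls back with a simple pole of residue of modulus $q_{1,j}$ there). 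This yields $q_{1,j}\cdot(1/q_1)+o(\ee^0)$, with the opposite sign on $G_{I+1}$ coming from the sign flip in (\ref{eq4.14}); it is exactly the same counting that gives $q_1/q$ in case (iii), equation (\ref{eq4.27a}). The enclosed-zeros picture you invoke is the mechanism of the first Sublemma in the proof of Lemma \ref{lemma4.2} (not Lemma \ref{lemma-ex-2}, which you cite), where the relevant pole of $\phi/a_N$ is at a finite point $u_{k_1}\ee^{B}$ over which the curve has a $q'$-fold point; transplanting it here mislocates the source of the residue at $P_{1,j}$, so your justification of the key evaluation does not go through as written, even though the final residue values you state are the correct ones.
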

\bigskip

Let $\gamma\in H_1(C_\ep;\ZZ)$ be a cycle which loops a cylinder $\Sigma$.
We fix the direction of $\gamma$ as Figure \ref{fig10b}.
\begin{figure}[htbp]
\begin{center}
\includegraphics[bb=86 670 452 790,clip,width=7cm]
{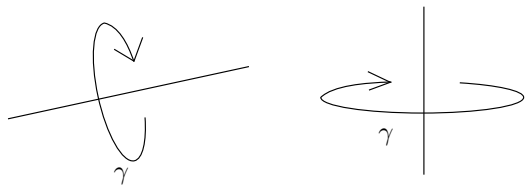}
\begin{picture}(0,0)
\put(-160,0){$\gamma$}
\put(-60,15){$\gamma$}
\end{picture}
\end{center}
\caption{We fix
the direction of $\gamma$ as these figures.
The figure on the left shows the direction of $\gamma$
for non-vertical $\Sigma$, and the one on the right shows that for vertical $\Sigma$.}
\label{fig10b}
\end{figure}
The integral 
$\int_{\gamma}{\omega_E}$
takes various values depending on the position of $\Sigma$
in $C_\ep$.
Let $M$ be a tropical edge associated with a cylinder $\Sigma$.

(i) The case $M=E$.

When one runs around a cylinder $\Sigma$, the $y$-coordinate runs around the origin.
Then, by (\ref{eq4.18}), we derive
\begin{align}
\textstyle
\int_{\gamma}{\omega_E}&=\textstyle \oint_{0+)}{(2\pi \ai)^{-1}\frac{dy}{y}}+o(\ee^0)
=\int_0^{2\pi}{(2\pi \ai)^{-1}\ai\,d\theta}+o(\ee^0)\nonumber\\
&=1+o(\ee^0).\label{eq4.26a}
\end{align}

(ii) In the case that $M$ is vertical and $M\neq E$,
it follows that $\int_\gamma{\omega_E}=o(\ee^0)$.

(iii) When $M\subset G_I\setminus
G_{I+1}$,
we consider the edge $M'=M_1\amalg M_2\amalg\cdots\amalg M_m$ $(M=M_1)$ and
denote the vertical thickness of $M_i$ by $q_i$ $(q_i=q/m)$.
From (\ref{eq4.3}) and (\ref{eq4.14}) it follows that
\begin{equation}\label{eq4.27a}
\textstyle
\int_{\gamma}{\omega_E}=
\left\{
\begin{array}{ll}
q_1/q+o(\ee^0)=1/m+o(\ee^0) & (M\subset\{X<B\})\\
o(\ee^0) & (M\subset\{X>B\})
\end{array}
\right..
\end{equation}
We used the assumption that $C$ has a good tropicalization for the first 
equality.

(iv) When $M\subset G_{I+1}\setminus
G_I$,
one has that
\begin{equation}\label{eq4.28a}
\textstyle
\int_{\gamma}{\omega_E}=
\left\{
\begin{array}{ll}
-1/m+o(\ee^0) & (M\subset\{X<B\})\\
o(\ee^0) & (M\subset\{X>B\})
\end{array}
\right. .
\end{equation}

(v) When $M\subset G_i$ $(i\neq I,I+1)$,
one has that $\int_\gamma{\omega_E}=o(\ee^0)$.
\bigskip

The remaining case is the degenerate case: $G_I\cap
G_{I+1}\neq\emptyset$. (Figure \ref{fig10})
\bigskip

(vi) The case $M\subset G_I\cap
G_{I+1}$.

Since $a_Iy^{N-I}$ is not a dominant term
in $f_y$, it follows that
\begin{equation}\label{eq4.29a}
\textstyle
\int_{\gamma}{\omega_E}=o(\ee^0).
\end{equation}
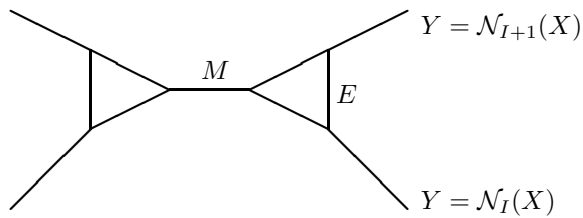
\begin{figure}[htbp]
\begin{center}
\begin{picture}(150,75)
\thicklines
\put(0,0){\line(1,1){30}}
\put(30,30){\line(2,1){30}}
\put(60,45){\line(1,0){30}}
\put(90,45){\line(2,-1){30}}
\put(120,30){\line(1,-1){30}}
\put(0,75){\line(2,-1){60}}
\put(90,45){\line(2,1){30}}
\put(120,60){\line(2,1){30}}
\put(30,30){\line(0,1){30}}
\put(120,30){\line(0,1){30}}
\put(155,0){$Y=\mathcal{N}_I(X)$}
\put(155,66){$Y=\mathcal{N}_{I+1}(X)$}
\put(123,40){$E$}
\put(72,49){$M$}
\end{picture}
\end{center}
\caption{Two subsets $G_I=\{Y=\mathcal{N}_I(X)\}$ and 
$G_{I+1}=\{Y=\mathcal{N}_{I+1}(X)\}$
may have intersection.
The edge $M$ is in the intersection.}
\label{fig10}
\end{figure}
\bigskip

Now we proceed for the $\beta$-cycle of $\omega_E$.
For this, we first calculate the integrals along the cylinders.
Let $\Sigma\subset C_\ep$ be a cylinder.
Take a path $\rho$ which runs along $\Sigma$.
We calculate the integral $\int_{\rho}{\omega_E}$
by using (\ref{eq4.14}) and (\ref{eq4.18}).
\begin{lemma}\label{lemma4.4}
Let $\mathfrak{I}:=\int_{\rho}{\omega_E}$. $(\rho$ runs along $\Sigma)$.
\begin{enumerate}
\item If $\Sigma$ is associated with 
$E=\{(B,(1-t)Y_0+tY_1)\,\vert\,0\leq t\leq 1,Y_0<Y_1\}$,
then
$\mathfrak{I}=-(2\pi \ai\ep)^{-1}(Y_1-Y_0)+o(\ee^0)$.
\item If $\Sigma$ is associated with a vertical edge except $E$, $\mathfrak{I}=o(\ee^0)$.
\item If $\Sigma$ is associated with a non-vertical edge $L$ of 
multiplicity $m$ and of
vertical thickness $q$
in $G_I$:
\begin{align*}
L=\{((1-t)X_0+tX_1,(1-t)Y_0+tY_1)\,\vert\,0\leq t\leq 1,X_0<X_1,Y_0<Y_1\}
\subset G_I,
\end{align*}
then
$\mathfrak{I}=-(2q\pi \ai\ep)^{-1}(\min{[B,X_1]}-\min{[B,X_0]})+o(\ee^0)$.
\item If $\Sigma$ is associated with a non-vertical edge $L$ of 
multiplicity $m$ and of
vertical thickness $q$
in $G_{I+1}$:
\begin{align*}
L=\{((1-t)X_0+tX_1,(1-t)Y_0+tY_1)\,\vert\,0\leq t\leq 1,X_0<X_1,Y_0<Y_1\}
\subset G_{I+1},
\end{align*}
then
$\mathfrak{I}=+(2q\pi \ai\ep)^{-1}(\min{[B,X_1]}-\min{[B,X_0]})+o(\ee^0)$.
\item
It $\Sigma$ is associated with a non-vertical edge in $G_J$
$(J\neq I,I+1)$, then $\mathfrak{I}=o(\ee^0)$.
\item If $\Sigma$ is associated with an edge in 
$G_I\cap G_{I+1}$,
then $\mathfrak{I}=o(\ee^0)$.
\end{enumerate}
\end{lemma}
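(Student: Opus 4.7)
The plan is to parameterize the path $\rho$ along the cylinder $\Sigma$ and to apply the asymptotic expressions for $\omega_E$ already collected in (\ref{eq4.14}), (\ref{eq4.18}), and (\ref{eq4.19}), depending on which stratum of $\mathrm{Trop}\,C$ contains the tropical edge $M$ associated with $\Sigma$. In each of the six cases the integral reduces either to something that is patently $o(\ee^0)$ (cases (2), (5), (6)), or to a single explicit antiderivative of the form $\int dy/y$ or $\int dx/(x - u_1\ee^B)$ (cases (1), (3), (4)). The asymptotics of the latter are then extracted via the identity $\log\ee^A = -A/\ep$ coming from $\ee = e^{-1/\ep}$.

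For case (1), where $M = E$, I apply $\omega_E \sim (2\pi\ai)^{-1}\,dy/y$ from (\ref{eq4.18}). Parameterizing $\rho$ so that $y = s\,\ee^{Y}$ with $s \in \rbatu$ and $Y$ running from $Y_0$ to $Y_1$, the integral becomes $(2\pi\ai)^{-1}\log(y_1/y_0) = (2\pi\ai)^{-1}\bigl(\log(s_1/s_0) - (Y_1 - Y_0)/\ep\bigr)$, and the bounded logarithmic term is absorbed into $o(\ee^0)$. For cases (2), (5), and (6), the appropriate estimate from (\ref{eq4.18}) or (\ref{eq4.19}), combined with the observation that on $G_I \cap G_{I+1}$ the term $a_I y^{N-I}$ is not dominant in $f_y$, shows that $\omega_E$ restricted to $\Sigma$ is of the form $o(\ee^0)\,dx$ (or $o(\ee^0)\,dy$) integrated over a compact portion of $\CC\RR^2$, whence $\mathfrak{I} = o(\ee^0)$ is immediate.

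For cases (3) and (4), $M$ is a non-vertical edge contained in $G_I$ or in $G_{I+1}$, and (\ref{eq4.14}) yields
\[
\mathfrak{I} \sim \mp \frac{1}{2\pi\ai q} \int_{x_0}^{x_1}\frac{dx}{x - u_1\ee^B} + o(\ee^0),
\]
where the sign is $-$ for $M \subset G_I$ and $+$ for $M \subset G_{I+1}$. Writing $x_i = r_i\,\ee^{X_i}$ with $r_i \in \rbatu$, the antiderivative is $\log\bigl((x_1 - u_1\ee^B)/(x_0 - u_1\ee^B)\bigr)$, which I evaluate by splitting into three subcases according to where $B$ sits relative to $X_0, X_1$: when both $X_i \leq B$ the dominant term of $x_i - u_1\ee^B$ is $r_i\ee^{X_i}$; when both $X_i \geq B$ it is $-u_1\ee^B$; when $X_0 < B < X_1$ the two endpoints behave oppositely. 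A direct check in each subcase shows that the logarithm equals $-(\min[B,X_1] - \min[B,X_0])/\ep + O(1)$, which substituted back gives the claimed uniform formula.

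The main obstacle I anticipate is this last piecewise analysis, namely verifying that the three subcases combine cleanly into the single $\min[B,\,\cdot\,]$ expression without stray signs or factors of $q$; once that combinatorial step is secured, the remaining assertions of the lemma are routine applications of the pre-established asymptotic estimates and of the defining relation $\ee = e^{-1/\ep}$.
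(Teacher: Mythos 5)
Your route is exactly the paper's: case (1) via (\ref{eq4.18}) and $\log\ee^{Y_1-Y_0}=-(Y_1-Y_0)/\ep$; cases (2), (5), (6) via (\ref{eq4.18}), (\ref{eq4.19}) and the fact that $a_Iy^{N-I}$ is not dominant on $G_I\cap G_{I+1}$ (i.e.\ (\ref{eq4.29a})); and cases (3), (4) by integrating $dx/(x-u_1\ee^B)$ between $r_0\ee^{X_0}$ and $r_1\ee^{X_1}$ and reading off the valuations of the endpoints, which is precisely how the paper arrives at the $\min[B,\cdot]$ expression. Absorbing the bounded $\log(s_1/s_0)$-type corrections into the error term also matches the paper's own convention.

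There is, however, a concrete sign slip in your treatment of cases (3)--(4). Equation (\ref{eq4.14}) gives $\omega_E\sim\{-\phi(x)/(q\,a_I)\}\,dx$ on $G_I$ and $\{+\phi(x)/(q\,a_I)\}\,dx$ on $G_{I+1}$, but by the definition (\ref{eq4.14c}) the ratio $\phi/a_I$ already carries the factor $\frac{1}{2\pi\ai}\frac{-1}{x-u_1\ee^B}$; combining the two, the correct asymptotic prefactor is $+\frac{1}{2\pi\ai q}\frac{dx}{x-u_1\ee^B}$ on $G_I$ and $-\frac{1}{2\pi\ai q}\frac{dx}{x-u_1\ee^B}$ on $G_{I+1}$ --- the opposite of the convention you state (you take $-$ on $G_I$, $+$ on $G_{I+1}$, apparently forgetting the $-1$ built into $\phi$). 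Since you then (correctly) evaluate the logarithm as $-(\min[B,X_1]-\min[B,X_0])/\ep+O(1)$, your chain as written yields $+(2q\pi\ai\ep)^{-1}(\min[B,X_1]-\min[B,X_0])$ in case (3) and the negative of that in case (4), i.e.\ the negative of the lemma's assertion, contrary to your claim that the "claimed uniform formula" follows. The fix is purely mechanical --- restore the minus sign from (\ref{eq4.14c}) --- and with it your argument coincides with the paper's proof.
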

\begin{proof}
1. From (\ref{eq4.18}) it follows that
\begin{align*}
\textstyle
\mathfrak{I}&\sim\textstyle
(2\pi \ai)^{-1}\cdot\int_{s_0\ee^{Y_0}}^{s_1\ee^{Y_1}}{(dy/y)}
=(2\pi \ai)^{-1}\log{\{(s_1/s_0)\ee^{Y_1-Y_0}\}}\\
&=-(2\pi \ai\ep)^{-1}(Y_1-Y_0)+\cdots.
\end{align*}
2. This can be obtained from (\ref{eq4.18}).\\
3. From (\ref{eq4.14}), 
\begin{align*}
\textstyle
\mathfrak{I}&\textstyle\sim
(2\pi \ai)^{-1}\cdot\int_{r_0\ee^{X_0}}^{r_1\ee^{X_1}}{\{1/q(x-u_{j_0}\ee^{B})\}dx}\\
&=(2q\pi \ai)^{-1}\log{\{(r_1\ee^{X_1}-u_{j_0}\ee^{B})/(r_0\ee^{X_0}-u_{j_0}\ee^{B})\}}\\
&=-(2q\pi \ai\ep)^{-1}(\min{[B,X_1]}-\min{[B,X_0]})+\cdots.
\end{align*}
4. This can be obtained in the same way as the case 3.\\
5. This follows from (\ref{eq4.19}). \\
6. This follows from (\ref{eq4.29a}).
\end{proof}
\bigskip

The result of Lemma \ref{lemma4.4} is easily understood by means of the
tropical bilinear form.
Let $\Gamma_E\subset \mathrm{Trop}\,C$ be a path 
with direction
defined by the following route:
\begin{center}
$(X=-\infty)\stackrel{\mbox{on }G_I}{\to} 
(X=B,Y=Y_0)\stackrel{\mbox{on }E}{\to}$
$(X=B,Y=Y_1)\stackrel{\mbox{on }G_{I+1}}{\to}(X=-\infty)$.
\end{center}
Using Lemma \ref{lemma4.1},
we can restate the claim of Lemma \ref{lemma4.4} as:
\begin{equation}\label{eq4.24}
\textstyle
\mathfrak{I}=
\int_\rho{\omega_E}=
-(2\pi \ai\ep)^{-1}\cdot m_L^{-1}\cdot\ell_T(L,\Gamma_E),
\end{equation}
where $m_L$ is the multiplicity of $L$.
Recall that the bilinear form $\ell_T(\cdot,\cdot)$ gives the tropical length
of intersection up to sign.
\bigskip

In fact, the equations (\ref{eq4.26a}--\ref{eq4.29a})
can be rewritten using the \textit{intersection number}.
Let $\gamma\subset C_\ep$ be a closed path surrounding some cylinder and
$E\subset\mathrm{Trop}\,C$ be a directed edge.
Denote the cylinder associated with $E$ by $\Sigma_E$.
And define the intersection number $(\gamma\circ E)$ by
\[
(\gamma\circ E):=\left\{
\begin{array}{ll}
+1& (\mbox{$\gamma$ surrounds the cylinder $\Sigma_E$ by positive direction})\\
-1& (\mbox{$\gamma$ surrounds the cylinder $\Sigma_E$ by negative direction})\\
0&(\mbox{else})
\end{array}\right..
\]
(Cf. figure \ref{fig12}).
\begin{figure}[htbp]
\begin{center}
\includegraphics[bb=100 675 480 795,clip,width=7cm]
{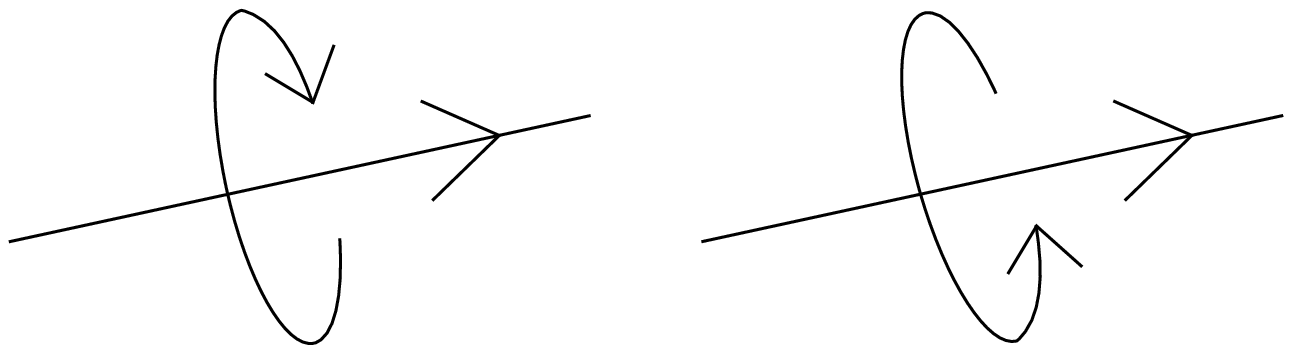}
\begin{picture}(0,0)(0,0)
\put(-200,5){$\Sigma_E$}
\put(-90,7){$\Sigma_E$}
\put(-175,35){$\gamma$}
\put(-70,35){$\gamma$}
\put(-170,-10){$(\gamma\circ E)=1$}
\put(-70,-10){$(\gamma\circ E)=-1$}
\end{picture}
\end{center}
\caption{The definition of the intersection number $(\gamma\circ E)$.}
\label{fig12}
\end{figure}

When $\gamma\subset C_\ep$ is a closed path loops a cylinder $\Sigma$,
it follows that
\begin{equation}\label{eq4.31a}
\textstyle
\int_{\gamma}{\omega_E}=m^{-1}(\gamma\circ\Gamma_E)+o(\ee^0),
\end{equation}
where $m$ is the multiplicity of the edge associated with $\Sigma$.

\subsubsection*{Poles without residue}

The differential $\omega_E$ also has poles without residue
and we can neglect the influence of these poles.
In fact, by (\ref{eq4.26a}--\ref{eq4.29a}), we can show that
$
\textstyle
\int_{\gamma}{x^k\omega_E}=o(\ee^0)\ (k\leq -1,\gamma\subset\{\val(x)<\!\!<0\}),
$
which implies
\begin{align*}
\omega_E=\left(\frac{c_{-n}}{z^n}+\cdots+\frac{c_{-1}}{z}+c_0+c_1z+\cdots\right)dz
\mbox{\quad at } z\in\{x=\infty\}\\
\ \Rightarrow\ c_k=o(\ee^0)\ \ (k<-1).
\end{align*}

\subsubsection{A modified differential}

Let $E=E_1\amalg\dots\amalg E_m$ be an edge of multiplicity $m$.
Define the differential $\upsilon:=\omega_{E_i}-\omega_{E_j}$, where 
$\omega_{E_i}$ is the differential associated with the edge $E_i$.
Then, $\upsilon$ satisfies the following:\\
(i) $\upsilon$ has no pole with residue, \\
(ii) For a closed path $\gamma$
surrounding a cylinder $\Sigma$ in $C_\ep$, one has that
\[\textstyle
\int_\gamma{\upsilon}=
(\gamma\circ (E_i-E_j))+o(\ee^0).
\]
(iii) For a path $\rho\subset C_\ep$ 
which runs along a cylinder $\Sigma$, it follows that
\[\textstyle
\int_\rho{\upsilon}=
-(2\pi \ai\ep)^{-1}\cdot\ell_T(L,E_i-E_j),
\]
where $L$ is the edge which is associated with $\Sigma$ and which is of multiplicity one.

Now we define a new differential which is a modification of $\omega_E$.
Let $\Gamma_E=E\amalg E^{(1)}\amalg E^{(2)}\amalg\dots\amalg 
E^{(n)}$ be the decomposition into edges.
We decompose each $E^{(i)}$ into edges of multiplicity one:
$E^{(i)}=E^{(i)}_1\amalg\cdots\amalg E_{m_i}^{(i)}$, where $m_i$ is the multiplicity of 
$E^{(i)}$.

Now we define a new modified differential associated with $E$.
Let 
\begin{gather*}
\tilde\omega_E:=\omega_E+\upsilon^{(1)}+\upsilon^{(2)}+\dots+\upsilon^{(n)},\\
\upsilon^{(i)}=m_i^{-1}\left\{(\omega_{E^{(i)}_1}-\omega_{E^{(i)}_2})
+(\omega_{E^{(i)}_1}-\omega_{E^{(i)}_3})+\dots+(\omega_{E^{(i)}_1}-
\omega_{E^{(i)}_{m_i}})
\right\}.
\end{gather*}
For the path $\tilde\Gamma_E$ which is defined by
$
\tilde\Gamma_E:=E\amalg E_1^{(1)}\amalg E_1^{(2)}\amalg\dots\amalg
E_1^{(n)}$,
we can rewrite
the equation (\ref{eq4.24}) as
\begin{equation}\label{eq4.30}
\textstyle
\int_\rho{\tilde\omega_E}=-(2\pi \ai\ep)^{-1}\ell_T(L,\tilde\Gamma_E),\quad
(\mbox{$\rho$ runs along $L$}),
\end{equation}
and we also rewrite the equation (\ref{eq4.31a}) as
\begin{equation}\label{eq4.31}
\textstyle
\int_\gamma{\tilde\omega_E}=(\gamma\circ\tilde\Gamma_E)+o(\ee^0).
\end{equation}

\subsubsection{Proof of the theorem}

We have finished all the preparations necessary to complete the proof of the main theorem.
Let $\mathfrak{X}$ be a set of edges contained in $\mathrm{Trop}\,C$.
Denote the free additive abelian group 
which is generated by the elements of $\mathfrak{X}$
by $\ZZ_{\mathfrak{X}}$. 

For a closed path $\Gamma$ contained in $\mathrm{Trop}\,C$,
choose edges $E_1,\dots,E_n;F_1,\dots,F_m$ of multiplicity $1$
such that $\tilde\Gamma_{E_1}
+\cdots+\tilde\Gamma_{E_n}
-\tilde{\Gamma}_{F_1}-\dots-\tilde{\Gamma}_{F_m}
=\Gamma\in\ZZ_{\mathfrak{X}}$.
Define the differential 
$$\omega_\Gamma':=
\tilde\omega_{E_1}
+\cdots+\tilde\omega_{E_n}-\tilde\omega_{F_1}-
-\cdots-\tilde\omega_{F_m}.
$$
By Proposition \ref{prop4.6},
(\ref{eq4.30}) and (\ref{eq4.31}), 
$\omega_E'$ has the following properties:
\begin{enumerate}
\def\labelenumi{(\theenumi)}
\def\theenumi{\roman{enumi}}
\item $\omega_E'$ has singularities in $\{x=\infty,0\}\cup\{y=\infty,0\}$.
\item Let $P_1,\dots,P_q$ be points in $\{x=\infty,0\}\cup\{y=\infty,0\}$
and suppose that these are associated with the same leaf of $\mathrm{Trop}\,C$.
Then, $\sum_{i=1}^q{\mathrm{Res}_{P_i}(\omega_\Gamma')}=0$.
\item Let $\alpha$ be a closed path surrounding a cylinder $\Sigma$
which is associated with the edge
$E\subset\mathrm{Trop}\,C$.
Then,
\begin{equation}\label{eq4.25}
\textstyle
\int_{\alpha}{\omega_\Gamma'}=
(\alpha\circ\Gamma)+o(\ee^0).
\end{equation}
\end{enumerate}
For a leaf $\Gamma_\infty$
of infinite length in $\mathrm{Trop}\,C$, 
we define the differential $\omega_{\Gamma_\infty}$
by:
\[
\omega_{\Gamma_\infty}:=
\omega_{c_1P_1+\cdots+c_nP_n},\quad
\left(
\begin{array}{c}
\text{$P_i$ is a point in the horns associated
with $\Gamma_\infty$}\\
\text{ s.t. 
$c_i=\mathrm{Res}_{P_i}(\omega'_\Gamma)$ is not $0$.
}
\end{array}
\right).
\]
(Recall
`$\omega_{c_1P_1+\dots+c_nP_n}$' is the normalised differential of the third kind.) 
Summing these differentials of the third kind for all edges of infinite length:
\[
\omega_\infty:=\sum_{\vert\Gamma_\infty\vert=\infty}{\omega_{\Gamma_\infty}}.
\]
Then, the new differential $\omega_\Gamma:=\omega_\Gamma'-\omega_\infty$
satisfies:
i) $\int_\alpha{\omega_\Gamma}=\int_\alpha{\omega_\Gamma'}$ and
ii)
$\omega_\Gamma$ has no singularity with non-zero residue
($\omega_\Gamma$ is of the second kind).
Moreover, by adding the normalised differentials of the second kind,
we can assume $\omega_\Gamma$ is of first kind.
(Recall that we can neglect the poles without residue.)

\begin{lemma}\label{lastlemma}
$\omega'_\Gamma-\omega_\Gamma\in\mathcal{M}+\mathcal{F}$.
\end{lemma}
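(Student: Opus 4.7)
The plan is to make the construction of $\omega_\Gamma$ explicit: it is obtained from $\omega'_\Gamma$ in two steps, first subtracting $\omega_\infty=\sum_{\vert\Gamma_\infty\vert=\infty}\omega_{\Gamma_\infty}$ to kill every non-zero residue, and then subtracting a normalised differential of the second kind $\Theta$ to kill the remaining principal parts. Thus
$$
\omega'_\Gamma-\omega_\Gamma=\omega_\infty+\Theta,
$$
and I would treat the two summands separately, aiming to place $\omega_\infty$ in $\mathcal{M}+\mathcal{F}$ and $\Theta$ in $\mathcal{F}$.

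For $\omega_\infty$ I would work one leaf at a time. Fix an infinite leaf $\Gamma_\infty$ and write $\omega_{\Gamma_\infty}=\omega_{c_1P_1+\cdots+c_nP_n}$, where the $P_i$ sit in horns attached to $\Gamma_\infty$. Property (ii) of $\omega'_\Gamma$ gives $c_1+\cdots+c_n=0$, and a comparison of residues and of $\alpha$-periods then forces
$$
\omega_{\Gamma_\infty}=\sum_{i=1}^{n-1}c_i\,\omega_{P_i-P_n}.
$$
Each pair $(P_i,P_n)$ lies on the same leaf, so Lemma \ref{lemma4.2} places every $\omega_{P_i-P_n}$ in $\mathcal{M}+\mathcal{F}$. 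Each coefficient $c_i=\mathrm{Res}_{P_i}(\omega'_\Gamma)$ is a non-zero rational multiple of $1/(2\pi\ai)$ up to $o(\ee^0)$ by Proposition \ref{prop4.6}, hence lies in $R^\times$; since $\mathcal{M}+\mathcal{F}$ is an $R^\times$-vector space, the combination stays in it. Summing over the finitely many infinite leaves yields $\omega_\infty\in\mathcal{M}+\mathcal{F}$.

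For $\Theta$ I would invoke the subsection Poles without residue, which shows that the Laurent coefficient $c_k$ of each $\omega_E$ at each of its poles is of order $o(\ee^0)$ whenever $k\le -2$ (the analogue at the other distinguished poles follows by the same argument). The principal parts of $\omega'_\Gamma$ without residue are $R^\times$-combinations of these, so $\Theta$ is an $o(\ee^0)$-combination of a fixed finite family of normalised second-kind differentials; such a family has $\beta$-periods bounded uniformly in $\ep$, which I would extract by pairing each member against the normalised $\omega_i$ in a Riemann bilinear relation analogous to $(\ref{eq4.11})$ and using $\int_{\alpha_j}\omega_i=\delta_{i,j}$. Hence $\int_{\beta_i}\Theta\to 0$ as $\ep\to 0^+$, so $\Theta\in\mathcal{F}$, completing the argument. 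The main obstacle is precisely this last uniform boundedness: it is what promotes the smallness of principal-part coefficients into smallness of $\beta$-periods and is therefore the linchpin of the second-kind half of the proof, while the third-kind half rests comfortably on Lemma \ref{lemma4.2}.
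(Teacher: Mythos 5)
Your handling of $\omega_\infty$ is essentially the paper's proof: the paper likewise reduces each $\omega_{\Gamma_\infty}$ to Lemma \ref{lemma4.2} (using that the residues at punctures on a common leaf sum to zero) and concludes by the $R^\times$-linearity of $\mathcal{M}+\mathcal{F}$, and your explicit decomposition $\omega_{\Gamma_\infty}=\sum_{i<n}c_i\,\omega_{P_i-P_n}$ merely makes the paper's brief argument precise. The second half of your argument, concerning the second-kind correction $\Theta$, goes beyond the paper, which simply absorbs that term into $\omega_\Gamma$ with the standing remark that poles without residue can be neglected; so the uniform-boundedness ``linchpin'' you flag is a gap only relative to your more ambitious scope (and indeed a point the paper itself leaves unjustified), not a deviation from the paper's own proof of this lemma.
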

\begin{proof}
Due to Lemma \ref{lemma4.2}, it follows that $\omega_{\Gamma_\infty}\in\mathcal{M}
+\mathcal{F}$ for each leaf $\Gamma_\infty$.
Because $\mathcal{M}$ and $\mathcal{F}$ are vector spaces, the required result 
is obtained soon. 
\end{proof}

\textbf{Proof of the theorem.} 
Let $\Gamma=T_{\beta_i}$, that is the closed path on $\mathrm{Trop}\,C$
associated with the $\beta$-cycle $\beta_i$ on $C_\ep$.
(See Section \ref{sec4.3}.)
By (\ref{eq4.25}), it follows that
\begin{equation}\label{lastequation}
\textstyle
\int_{\alpha_j}{\omega_{T_{\beta_i}}}=
\int_{\alpha_j}{\omega_{T_{\beta_i}}'}=
\left\{
\begin{array}{ll}
1+o(\ee^0)& (i=j)\\
o(\ee^0)&(i\neq j)
\end{array}
.\right.
\end{equation}

Let $\omega_j$ be the $j$-th normalised holomorphic differential 
(Section \ref{sec4.3}).
Clearly, (\ref{lastequation}) means 
\[\omega_j=\omega_{T_{\beta_j}}\cdot(1+o(\ee^0)),\quad \forall j,\]
or equivalently $\omega_j-\omega_{T_{\beta_j}}\in\mathcal{M}$.
Due to Lemma \ref{lastlemma} 
we obtain 
\begin{equation}\label{eq4.27}
\omega_i-\omega'_{T_{\beta_i}}\in\mathcal{M}+\mathcal{F}.
\end{equation}

Consider $g\times g$ matrices $B=(\int_{\beta_j}{\omega_i})_{i,j}$ and
$B'=(\int_{\beta_j}{\omega'_{T_{\beta_i}}})_{i,j}$.
Equation (\ref{eq4.27}) can be rewritten as
\begin{gather}\label{eq4.28}
B-B'=o(\ee^0)B+B^\dagger\qquad
\textstyle\lim_{\ep\to 0^+}{\vert B^\dagger\vert}<\infty,\\
\text{or}\qquad \{I+o(\ee^0)\cdot\Delta\}\cdot B-B'=B^\dagger,
\label{eq4.29}
\end{gather}
where $I$ is the identity matrix and $\Delta$ is a $g\times g$ matrix.
On the other hand, from (\ref{eq4.24})
one concludes that $B'$ tends to infinity when $\ep\to 0^+$.
We thus obtain
\begin{equation}
B\sim B'\quad\quad\quad(\ep\to 0^+),
\end{equation}
by taking a limit $\ep\to 0^+$ of (\ref{eq4.29}).

To conclude the proof of theorem,
it is sufficient to prove that
$$
\textstyle
\int_{\beta_j}{\omega'_{T_{\beta_i}}}=-(2\pi \ai\ep)^{-1}\cdot
\ell(T_{\beta_j},T_{\beta_i}),
$$
which is
a mere linear combination of copies of (\ref{eq4.30}). $\qed$

\ack
I am very grateful to Professor Tetsuji Tokihiro and
Professor Ralph Willox for helpful comments 
on this paper. 
I would like to thank Professor Jonathan Nimmo for valuable advice
on the revised version of this paper.
This work was supported by Grant-in-Aid for
the Japan Society for the Promotion of Science Fellows (09J07090).

\appendix
\section{Genericness Condition}

In this paper, we introduced some conditions on $C_\ep$ and $\mathrm{Trop}\,C$
to make the problem easier.
Let $f_\ep(x,y)=\sum_{i}{a_i(x)y^{N-i}}$ be the defining polynomial of $C_\ep$, where
\[\textstyle
a_i(x)=c_i\ee^{A_i}x^{m_i}\prod_{j=1}^{d_i}{(x-u_{i,j}\ee^{B_{i,j}})},
\quad
c_i,u_{i,j}\in \rbatu\!,\ A_i,B_{i,j}\in\QQ,\,m_i\in\NN.\]
Let $\theta\in SL_2(\ZZ)$ be a rotation of $\mathrm{Trop}\,C$.
The translation
$\theta$ naturally acts on $C_\ep$ by $x\mapsto x^{\delta}y^{-\beta}$;
$y\mapsto x^{-\gamma}y^\alpha$.
Define the new polynomial
\begin{gather*}
\textstyle
f^\theta(x,y)=f_\ep(x^{\delta}y^{-\beta},x^{-\gamma}y^\alpha)
=\sum_{i}{a^\theta_i(x)y^{N'-i}}\\
\textstyle
a^\theta_i(x)=c^\theta_i
\ee^{A^\theta_i}x^{m^\theta_i}\prod_{j}{(x-u^\theta_{i,j}\ee^{B^\theta_{i,j}})}.
\end{gather*}

To be precise, we assumed
three conditions:
\begin{description}
\item[\textbf{Genericness condition.}] 
For fixed $\theta\in SL_2(\ZZ)$,
$\htht (u^\theta_{i,j})\in\CC\setminus\{0\}$ $(\forall i,j)$ are all distinct.
\item[\textbf{Condition I.}]
For each edge $E$, 
$m=\mathrm{g.c.d.}(q,w)$,
where $m,q,w$ respectively are
the multiplicity, the vertical thickness and the horizontal thickness of $E$.
\item[\textbf{Condition I\!I.}] 
For each edge $E=E_1\amalg\dots\amalg E_m$,
$q_1=\dots=q_m$, $w_1=\dots=w_m$, where $q_i,w_i$ 
respectively
are the vertical thickness and
the horizontal thickness
of $E$.
\end{description}

The following relation exists between these conditions.
\begin{prop}
Genericness condition $\Rightarrow$
Condition I\!I
$\Rightarrow$ Condition I.
\end{prop}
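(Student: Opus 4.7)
The plan is to analyse the truncation $f^P$ for $P\in\mathrm{Int}\,E$, which is a Laurent polynomial whose Newton polytope is a line segment in $\ZZ^2$ perpendicular (via tropical duality) to the primitive direction of $E$, with horizontal span $w$ and vertical span $q$. Under the running hypothesis of total non-singularity $f^P$ has no repeated factors, so it factors as $f^P=f^P_1\cdots f^P_m$ into $m$ distinct irreducible Laurent polynomials, each $f^P_i$ being supported on a collinear sub-segment of spans $(w_i,q_i)$. The Minkowski-sum property of Newton polytopes then yields the additivity $w=\sum_i w_i$ and $q=\sum_i q_i$.

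To prove Condition II $\Rightarrow$ Condition I, assume $w_1=\cdots=w_m$ and $q_1=\cdots=q_m$, so that $w=mw_1$ and $q=mq_1$; it remains to show $\gcd(q_1,w_1)=1$. Set $d_1:=\gcd(q_1,w_1)$ and perform the substitution $z=x^{w_1/d_1}y^{q_1/d_1}$; up to a monomial, $f^P_1$ then becomes a univariate polynomial in $z$ of degree $d_1$. If $d_1>1$, this polynomial splits over $\CC$ into linear factors of the form $x^{w_1/d_1}y^{q_1/d_1}-\xi$, each of which is irreducible in $\CC\RR^2$ since $\gcd(w_1/d_1,q_1/d_1)=1$; hence $f^P_1$ itself would be reducible, contradicting our assumption. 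Therefore $d_1=1$, giving $\gcd(q,w)=m\gcd(q_1,w_1)=m$.

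To prove Genericness condition $\Rightarrow$ Condition II, choose $\theta\in SL_2(\ZZ)$ sending the primitive direction of $E$ to $(0,1)^T$, so that $\theta(E)$ is a vertical edge of $\mathrm{Trop}\,V(f^\theta)$. In the rotated coordinates the truncation at an interior point takes the form $y^{k}\cdot h^\theta(x)$, where $h^\theta(x)$ is the product of exactly those linear factors $(x-u^\theta_{I,j}\ee^{B^\theta_{I,j}})$ of $a^\theta_I(x)$ whose tropical valuations $B^\theta_{I,j}$ equal the $x$-coordinate of $\theta(E)$. By the Genericness condition applied to $\theta$ the numbers $\htht (u^\theta_{I,j})$ are pairwise distinct, so $h^\theta(x)$ has simple roots and splits into $m$ distinct linear factors over $\CC$. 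Each such factor defines an irreducible component of vertical thickness $0$ and horizontal thickness $1$ in the rotated coordinates; transporting back along $\theta^{-1}$, which is a fixed element of $SL_2(\ZZ)$, every $E_i$ therefore shares the same numerical pair $(q_i,w_i)$, which is exactly Condition II.

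The main obstacle is the second implication: one must carefully track how the Newton-polytope support of $f^P$ in the original coordinates corresponds, under $\theta$, to the one-dimensional support of $h^\theta$ in the rotated ones, and verify that no intermediate coefficients cancel and that the relevant roots are genuinely simple. Both of these are precisely what the Genericness hypothesis is designed to exclude, but the bookkeeping --- relating pairwise distinctness of $\htht (u^\theta_{i,j})$ to simplicity of the roots of $h^\theta$ and to non-vanishing of every coefficient along the segment --- is the most delicate step of the argument.
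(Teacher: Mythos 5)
Your proof is correct, and it splits naturally into two halves that compare differently with the paper. For \textbf{Genericness $\Rightarrow$ Condition II} you follow essentially the same route as the paper: normalise $E$ to a vertical edge by some $\theta\in SL_2(\ZZ)$, observe that the truncation there is a monomial times a product of linear factors in $x$ whose roots are the numbers $\htht(u^\theta_{I,j})$, invoke the genericness condition to get $m$ simple roots, hence $m$ components each with thickness pair $(0,1)$, and transport back by $\theta^{-1}$; your final ``transporting back'' step is exactly the paper's relation (A.1), asserted at the same level of detail (note that strictly it is the \emph{direction vector} of each Newton segment that transforms linearly, the thicknesses being the absolute values of its components --- but since all components share the direction $\pm(1,0)$ in the rotated frame, the conclusion $q_1=\dots=q_m$, $w_1=\dots=w_m$ follows just as in the paper). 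For \textbf{Condition II $\Rightarrow$ Condition I} you genuinely depart from the paper: the paper again reduces to a vertical edge (where $w_i=1$, $q_i=0$) and then argues that both $m$ and $\gcd(q,w)$ are $SL_2(\ZZ)$-invariant via (A.1), whereas you argue intrinsically that each irreducible factor $f^P_i$, being supported on a lattice segment of spans $(w_i,q_i)$, must have $\gcd(q_i,w_i)=1$ --- otherwise the substitution $z=x^{w_i/d_i}y^{\pm q_i/d_i}$ (the sign of the $y$-exponent depends on the slope of the dual segment, a detail worth stating) exhibits $f^P_i$ as a product of $d_i>1$ binomials $x^{a}y^{b}-\xi$ with $\gcd(a,b)=1$, $\xi\neq 0$, contradicting irreducibility; then Condition II gives $\gcd(q,w)=m\gcd(q_1,w_1)=m$. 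Your version buys a coordinate-free argument that needs no rotation and in fact proves the stronger statement that \emph{every} component has primitive thickness vector, while the paper's version is shorter once (A.1) is available; both rest on the same background facts (total non-singularity forcing $f^P$ squarefree, multiplicity $=$ number of irreducible components, and additivity of thicknesses over components).
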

\begin{proof}
(Genericness cond.\relax\,$\Rightarrow$ Cond.\relax{}\,I\!I)\ \ 
We first prove the case when $E=E_1\amalg\dots\amalg E_m$ is vertical.
Then it is clear that $q_1=\dots=q_m=0$.
The defining equation of 
vertical edge $E$ is of the form $(a+w)X+bY+c=aX+bY+c'$.
When we substitute $(x,y)=(r\ee^X,s\ee^Y)$, $(X,Y)\in E$ into $f_\ep(x,y)$,
the polynomial $a_{N-b}(x)y^b$ is dominant.
Moreover, we can derive the relation $\htht{(a_{N-b}(x))}=0$, which gives us:
\[\htht
(r-u_{i,j_1})(r-u_{i,j_2})\dots
(r-u_{i,j_w})=0,\quad j=N-b,\,
j_k\in\{j\,\vert\,B_{i,j}=(c-c')/w\}.
\]
By the genericness condition, 
this equation implies that $w$ distinct cylinders in $C$
are associated with the edge $E$.
Then $m=w$, which implies $w_1=w_2=\dots=w_m=1$.

In the general case, we consider $\theta\in SL_2(\ZZ)$
such that $\theta\cdot E=(\theta \cdot E_1)\amalg\cdots\amalg(\theta \cdot E_m)$ is vertical.
In fact, the vertical thickness and the horizontal thickness of $E_i$ satisfy the relation
\begin{equation}\label{eq.app1}
q_i=\alpha q_i^\theta+\beta w_i^\theta,\qquad
q_i=\gamma q_i^\theta+\delta w_i^\theta,
\end{equation}
where $q_i^\theta$ and $w_i^\theta$ are the vertical thickness and the horizontal thickness of 
$\theta\cdot E_i$.
This equation implies $q_1=\dots=q_m$ and $w_1=\dots=w_m$.

(Cond.\relax\,I\!I $\Rightarrow$ Cond.\relax\,I)\ \ 
From the equation $w_1=\dots=w_m=1$ for the vertical edge $E=E_1\amalg\dots
\amalg E_m$
we conclude that
it is enough to prove that $\mathrm{g.c.d.}(q,w)$ is invariant under the rotation $\theta$.
This fact follows immediately from (\ref{eq.app1}).
\end{proof}
\bigskip

Due to the above proposition, we can claim that the only
essential assumption for our arguments
is the genericness condition.

\end{document}